\def\emptyline{\vspace{12pt}}
\newcommand{\ph}[2]{{\left({#1}\right)}_{#2}}
\renewcommand*{\bar}{\overline}
\newcommand{\gfp}[1]{\Gamma_p{\left({#1}\right)}}
\newcommand{\biggfp}[1]{\Gamma_p{\bigl({#1}\bigr)}}
\newcommand{\bin}[2]{\left({\genfrac{}{}{0pt}{}{#1}{#2}}\right)}
\newcommand{\biggbin}[2]{\biggl({\genfrac{}{}{0pt}{}{#1}{#2}}\biggr)}
\newcommand{\ffs}{\genfrac{}{}{0pt}{}{}{.}}
\newcommand{\fffs}{\genfrac{}{}{0pt}{}{}{\ffs}}
\newcommand{\fc}{\genfrac{}{}{0pt}{}{}{,}}
\DeclareMathOperator{\rep}{rep}
\DeclareMathOperator{\Bin}{Bin}
\theoremstyle{plain}
\newtheorem{theorem}{Theorem}[section]
\newtheorem{lemma}[theorem]{Lemma}
\newtheorem{prop}[theorem]{Proposition}
\newtheorem{cor}[theorem]{Corollary}
\theoremstyle{definition}
\newtheorem{defi}[theorem]{Definition}
\newtheorem{conj}[theorem]{Conjecture}
\numberwithin{equation}{section}
\def\imod#1{\allowbreak\mkern5mu({\operator@font mod}\,\,#1)}
\begin{document}

\title[Extending Gaussian hypergeometric series to the $p$-adic setting]{Extending Gaussian hypergeometric\\ series to the $p$-adic setting}
\author{Dermot M\lowercase{c}Carthy}  

\address{Department of Mathematics, Texas A\&M University, College Station, TX 77843-3368, USA}

\email{mccarthy@math.tamu.edu}

\subjclass[2010]{Primary: 33E50; Secondary: 33C20, 11S80}

\begin{abstract}
We define a function which extends Gaussian hypergeometric series to the $p$-adic setting. This new function allows results involving Gaussian hypergeometric series to be extended to a wider class of primes. We demonstrate this by providing various congruences between the function and truncated classical hypergeometric series. These congruences provide a framework for proving the supercongruence conjectures of Rodriguez-Villegas.
\end{abstract}

\maketitle

\section{Introduction}
\makeatletter{\renewcommand*{\@makefnmark}{}
\footnotetext{This work was supported by the UCD Ad Astra Research Scholarship program.}
In \cite{G}, Greene introduced hypergeometric series over finite fields or \emph{Gaussian hypergeometric series}. Let $\mathbb{F}_{p}$ denote the finite field with $p$, a prime, elements. We extend the domain of all multiplicative characters $\chi$ of $\mathbb{F}^{*}_{p}$ to $\mathbb{F}_{p}$, by defining $\chi(0):=0$ (including the trivial character $\varepsilon$) and denote $\bar{B}$ as the inverse of $B$. For characters $A$ and $B$ of $\mathbb{F}_{p}^*$, define
\begin{equation*}\label{FF_Binomial}
\binom{A}{B} := 
\frac{B(-1)}{p} \sum_{x \in \mathbb{F}_{p}} A(x) \bar{B}(1-x).
\end{equation*}
For characters $A_0,A_1,\dotsc, A_n$ and $B_1, \dotsc, B_n$ of $\mathbb{F}_{p}^*$ and 
$x \in \mathbb{F}_{p}$, define the Gaussian hypergeometric series by
\begin{equation*}
{_{n+1}F_n} {\left( \begin{array}{cccc} A_0, & A_1, & \dotsc, & A_n \\
\phantom{A_0} & B_1, & \dotsc, & B_n \end{array}
\Big| \; x \right)}_{p}
:= \frac{p}{p-1} \sum_{\chi} \binom{A_0 \chi}{\chi} \binom{A_1 \chi}{B_1 \chi}
\dotsm \binom{A_n \chi}{B_n \chi} \chi(x)
\end{equation*}
where the sum is over all multiplicative characters $\chi$ of $\mathbb{F}_{p}^*$.

These series are analogous to classical hypergeometric series and have been used in character sum evaluations \cite{GS}, finite field versions of the Lagrange inversion formula \cite{G2}, the representation theory of SL($2, \mathbb{R}$) \cite{G3}, formula for traces of Hecke operators \cite{F2, FOP}, formulas for Ramanujan's $\tau$-function \cite{F2, P}, and evaluating the number of points over $\mathbb{F}_{p}$ of certain algebraic varieties \cite{AO, F2, V}.

They have also played an important role in the proof of many supercongruences \cite{A, AO, K, M1, M2, M, MO}. The main approach was originally established by Ahlgren and Ono in proving the Ap\'ery number supercongruence \cite{AO}.
For $n\geq0$, let $A(n)$ be the the numbers defined by 
\begin{equation*}
A(n):= \sum_{j=0}^{n} \bin{n+j}{j}^2 \bin{n}{j}^2.
\end{equation*}
These numbers were used by Ap\'ery in his proof of the irrationality of $\zeta(3)$ \cite{Ap, vdP} and are commonly known as the Ap\'ery numbers. If we define integers $\gamma(n)$ by
\begin{equation*}
\sum_{n=1}^{\infty} \gamma(n)q^n := q \prod_{n=1}^{\infty}(1-q^{2n})^4 (1-q^{4n})^4,
\end{equation*}
then Beukers conjectured \cite{B1} that for $p\geq3$ a prime,
\begin{equation}\label{for_ANS}
A\left(\tfrac{p-1}{2}\right)\equiv \gamma(p) \pmod{p^2}.
\end{equation}
For a complex number $a$ and a non-negative integer $n$ let $\ph{a}{n}$ denote the rising factorial defined by
\begin{equation}\label{RisFact}
\ph{a}{0}:=1 \quad \textup{and} \quad \ph{a}{n} := a(a+1)(a+2)\dotsm(a+n-1) \textup{ for } n>0.
\end{equation}
Then, for complex numbers $a_i$, $b_j$ and $z$, with none of the $b_j$ being negative integers or zero, we define the truncated generalized hypergeometric series
\begin{equation*}
{{_rF_s} \left[ \begin{array}{ccccc} a_1, & a_2, & a_3, & \dotsc, & a_r \vspace{.05in}\\
\phantom{a_1} & b_1, & b_2, & \dotsc, & b_s \end{array}
\Big| \; z \right]}_{m}
:=\sum^{m}_{n=0}
\frac{\ph{a_1}{n} \ph{a_2}{n} \ph{a_3}{n} \dotsm \ph{a_r}{n}}
{\ph{b_1}{n} \ph{b_2}{n} \dotsm \ph{b_s}{n}}
\; \frac{z^n}{{n!}}.
\end{equation*}
If we first recognize that
\begin{equation*}
A\left(\tfrac{p-1}{2}\right) \equiv 
{_{4}F_3} \Biggl[ \begin{array}{cccc} \frac{1}{2}, & \frac{1}{2}, & \frac{1}{2}, & \frac{1}{2}\vspace{.05in}\\
\phantom{\frac{1}{2}} & 1, & 1, & 1 \end{array}
\bigg| \; 1 \Biggr]_{p-1}
\pmod{p^2}
\end{equation*}
then we can summarize Ahlgren and Ono's proof of (\ref{for_ANS}) in the following two results.
\begin{theorem}[Ahlgren and Ono \cite{AO}]\label{thm_AO1} 
If $p$ is an odd prime and $\phi$ is the character of order 2 of $\mathbb{F}_p^*$, then
\begin{equation*}
{_{4}F_3} \Biggl[ \begin{array}{cccc} \frac{1}{2}, & \frac{1}{2}, & \frac{1}{2}, & \frac{1}{2}\vspace{.05in}\\
\phantom{\frac{1}{2}} & 1, & 1, & 1 \end{array}
\bigg| \; 1 \Biggr]_{p-1}
\equiv
-p^3 \:  {_{4}F_3}  \Biggl( \begin{array}{cccc} \phi, & \phi, & \phi, & \phi \\
\phantom{\phi} & \varepsilon, & \varepsilon, & \varepsilon \end{array}
\bigg| \; 1 \Biggr)_{p}-p \pmod{p^2} .
\end{equation*}
\end{theorem}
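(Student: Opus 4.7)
The plan is to unfold the Gaussian hypergeometric series on the right in its character‐sum form, convert each binomial coefficient to Gauss sums, and translate everything to products of $p$-adic gamma values via the Gross--Koblitz formula, at which point the expression should collapse to the truncated Pochhammer sum on the left modulo $p^2$.

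Concretely, the definition gives
\begin{equation*}
{_{4}F_3}\!\left(\begin{array}{cccc}\phi,&\phi,&\phi,&\phi\\ &\varepsilon,&\varepsilon,&\varepsilon\end{array}\Big|\;1\right)_{p} = \frac{p}{p-1}\sum_{\chi}\binom{\phi\chi}{\chi}^{4},
\end{equation*}
and since $(\phi\chi)\bar{\chi} = \phi \neq \varepsilon$ for every $\chi$, I would use $\binom{\phi\chi}{\chi} = \chi(-1)p^{-1}J(\phi\chi,\bar{\chi})$ together with $J(\phi\chi,\bar{\chi}) = g(\phi\chi)g(\bar{\chi})/g(\phi)$. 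Using $g(\phi)^{2} = \phi(-1)\,p$ then expresses the right-hand side, before the $-p^{3}$ multiplier, as a Gauss-sum sum $\sum_{\chi} g(\phi\chi)^{4}g(\bar{\chi})^{4}$ divided by a power of $p$.

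Next I would parametrize $\chi = \omega^{-a}$ for $0 \leq a \leq p-2$ via the Teichm\"uller character $\omega$, and apply the Gross--Koblitz formula $g(\omega^{-a}) = -\pi^{a}\Gamma_{p}\!\left(\tfrac{a}{p-1}\right)$, where $\pi^{p-1} = -p$. The $\pi$-powers telescope so that the overall prefactor $-p^{3}$ is naturally produced. What remains is a sum over $a$ of products of eight $p$-adic gamma values, which I would expand to second order using the functional equation $\Gamma_{p}(x+1) = -h(x)\Gamma_{p}(x)$ and the standard congruence $\Gamma_{p}\!\left(\tfrac{a}{p-1}\right) \equiv \Gamma_{p}\!\left(\tfrac{a}{p}\right)\!\left(1 + \tfrac{a}{p-1}G_{1}\right)\pmod{p^{2}}$ flavor, converting ratios of $\Gamma_{p}$-values into Pochhammer symbols. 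A suitable change of index should identify the resulting sum, modulo $p^{2}$, with $\sum_{n=0}^{p-1}\frac{[(1/2)_{n}]^{4}}{(n!)^{4}}$, which is precisely the truncated ${_{4}F_{3}}$ on the left.

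The main obstacle will be twofold. First, the characters $\chi = \varepsilon$ and $\chi = \phi$ are exceptional: for them one of the Gauss sums degenerates to $-1$ rather than fitting the Gross--Koblitz template, and they must be computed directly from the definition (for instance, $\binom{\phi}{\varepsilon} = -1/p$). Their combined contribution, after multiplication by $-p^{3}$, is what I expect to produce the isolated $-p$ correction term in the statement. Second, all Taylor expansions of $\Gamma_{p}$ must be tracked to $p^{2}$-precision, and the bookkeeping of powers of $\pi$ and logarithmic derivatives of $\Gamma_{p}$ is where the delicate work lies; a miscount at $O(p)$ would spoil the match with the classical Pochhammer symbols $(1/2)_{n}$.
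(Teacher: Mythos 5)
The paper does not actually prove this statement---it is quoted from Ahlgren--Ono \cite{AO}---but within the paper it is an immediate consequence of the stated machinery: Proposition \ref{prop_GtoGHS} with all $\frac{m_i}{d_i}=\frac{1}{2}$ gives ${_{4}G}\bigl(\tfrac12,\tfrac12,\tfrac12,\tfrac12\bigr)_p=-p^3\,{_4F_3}(\phi,\phi,\phi,\phi;\varepsilon,\varepsilon,\varepsilon\,|\,1)_p$, and Theorem \ref{thm_DMCMortTrunc} then applies with $S=2=n-1$ and $\delta=\Gamma_p(\tfrac12)^4=1$, yielding exactly the claimed congruence. Your outline (character sum $\to$ Jacobi and Gauss sums $\to$ Gross--Koblitz $\to$ mod-$p^2$ expansion of $\Gamma_p$ $\to$ Pochhammer symbols) is precisely the computation underlying (\ref{for_GHStoP}) and Theorem \ref{thm_DMCMortTrunc}, and is the route taken in \cite{AO}, so the overall strategy is sound.

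The one concrete misstep is your account of where the $-p$ comes from. The characters $\chi=\varepsilon$ and $\chi=\phi$ are not genuinely exceptional here: since $(\phi\chi)\bar\chi=\phi\neq\varepsilon$ for every $\chi$, the identity $J(\phi\chi,\bar\chi)=g(\phi\chi)g(\bar\chi)/g(\phi)$ holds uniformly (when $\chi=\varepsilon$ or $\chi=\phi$ both sides equal $-1$, using $g(\varepsilon)=-1$), and Gross--Koblitz covers the exponent $a=0$ via $\Gamma_p(0)=1$. Moreover, a direct computation shows that the combined contribution of these two characters to $-p^3\,{_4F_3}(\cdots)_p$ is $-p^3\cdot\frac{p}{p-1}\cdot\frac{2}{p^4}=\frac{-2}{p-1}\equiv 2+2p\pmod{p^2}$, which is not $-p$; these terms are in any case already absorbed into the uniform sum. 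The $-p$ actually arises from the floor-function bookkeeping in the Gross--Koblitz step: in (\ref{for_GHStoP}) the exponent of $(-p)$ jumps as $j$ passes $\frac{p-1}{2}$, and the leading part of the second half of the summation range contributes $\delta\cdot p$ with $\delta=\Gamma_p(\tfrac12)^4=1$ --- this is exactly the $\delta\cdot p$ term of Theorem \ref{thm_DMCMortTrunc}. If you look for the $-p$ only in the two degenerate characters, the mod-$p^2$ bookkeeping will not close.
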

\begin{theorem}[Ahlgren and Ono \cite{AO}] 
If $p$ is an odd prime and $\phi$ is the character of order 2 of $\mathbb{F}_p^*$, then
\begin{equation*}
-p^3 \:  {_{4}F_3}  \Biggl( \begin{array}{cccc} \phi, & \phi, & \phi, & \phi \\
\phantom{\phi} & \varepsilon, & \varepsilon, & \varepsilon \end{array}
\bigg| \; 1 \Biggr)_{p}-p
=\gamma(p) .
\end{equation*}
\end{theorem}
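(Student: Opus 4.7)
The plan is to identify the quantity $-p^3 \cdot {_4F_3}(\phi,\phi,\phi,\phi;\varepsilon,\varepsilon,\varepsilon\mid 1)_p - p$ as a trace of Frobenius on an algebraic variety $V/\mathbb{F}_p$, and then to recognize that trace as $\gamma(p)$ via modularity of $V$.

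First I would unfold the Gaussian hypergeometric series. Substituting Greene's representation
\[
\binom{\phi\chi}{\chi} = \frac{\chi(-1)}{p}\sum_{x\in\mathbb{F}_p}\phi\chi(x)\bar{\chi}(1-x)
\]
into each of the four binomial factors, then exchanging the $\chi$-sum with the four $x_i$-sums and applying orthogonality $\sum_\chi \chi(a) = (p-1)\delta_{a=1}$, one can rewrite $p^3\cdot{_4F_3}(\phi,\phi,\phi,\phi;\varepsilon,\varepsilon,\varepsilon\mid 1)_p$ as a sum of $\phi\bigl(x_1(1-x_1)x_2(1-x_2)x_3(1-x_3)x_4(1-x_4)\bigr)$ over $(x_1,\dotsc,x_4)\in(\mathbb{F}_p^*)^4$ subject to a multiplicative constraint, up to explicit polynomial-in-$p$ boundary contributions coming from the trivial character and from the degenerate loci $x_i\in\{0,1\}$.

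Next, applying the elementary identity $1+\phi(a)=\#\{y\in\mathbb{F}_p:y^2=a\}$ for $a\ne 0$, I would convert this character sum into an honest count of $\mathbb{F}_p$-points on the affine threefold
\[
V:\quad y^2 = x_1(1-x_1)x_2(1-x_2)x_3(1-x_3)x_4(1-x_4)
\]
(one of the variables being eliminated by the constraint), or rather on a suitable smooth projective Calabi--Yau model thereof. The constants $-p^3$ and $-p$ appearing in the statement emerge naturally: the former from Greene's $p/(p-1)$ prefactor together with the sign coming from $\phi(-1)$, and the latter from isolating the trivial-character contribution in the $\chi$-sum.

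Finally, I would invoke modularity. The relevant two-dimensional piece of the middle cohomology of (a resolution of) $V$ is known to carry the Galois representation attached to the weight $4$ newform $f(z)=\eta(2z)^4\eta(4z)^4$; by the Eichler--Shimura relation, the trace of Frobenius at $p$ on this piece is precisely $\gamma(p)$, and the theorem follows. The main obstacle is this last step: the modularity input is deep, and identifying the correct smooth model of $V$ so that the remaining cohomology contributes exactly the boundary polynomial-in-$p$ pieces already separated off requires care. The preceding character-sum bookkeeping, while tedious in pinning down the exact constants $-p^3$ and $-p$, is elementary once $V$ has been correctly identified; one expects that in \cite{AO} this modularity is reduced to an explicit CM-type computation given the low level and weight of $f$.
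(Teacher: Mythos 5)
First, a point of orientation: this paper does not prove the statement at all. It is quoted verbatim from Ahlgren and Ono \cite{AO} purely as motivation for the template ``truncated series $\to$ Gaussian series $\to$ Fourier coefficient,'' so there is no in-paper proof to compare against; your proposal has to be measured against the argument actually given in \cite{AO}. By that standard your outline has the right architecture. One does unfold the ${_4F_3}$ via Greene's binomial and orthogonality of characters into a fourfold character sum, converts $1+\phi(\cdot)$ into a point count on a Calabi--Yau threefold, and then invokes modularity of that threefold to identify a trace of Frobenius with $\gamma(p)$, the $-p^3$ and $-p$ emerging from the normalization $p/(p-1)$ and from the trivial-character and degenerate-locus contributions. (A cosmetic difference: the threefold used in \cite{AO} is the van Geemen--Nygaard model $x+\tfrac1x+y+\tfrac1y+z+\tfrac1z+w+\tfrac1w=0$ rather than your double cover $y^2=\prod x_i(1-x_i)$, but that is bookkeeping.)

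The substantive problem is your last step. The form $f(z)=\eta(2z)^4\eta(4z)^4$ is the unique newform of weight $4$ on $\Gamma_0(8)$ and it does \emph{not} have complex multiplication: since the level is a power of $2$, the only candidate CM fields are $\mathbb{Q}(\sqrt{-1})$ and $\mathbb{Q}(\sqrt{-2})$, and these are ruled out by $a_3=-4\neq 0$ and $a_5=-2\neq 0$ respectively. So there is no ``explicit CM-type computation'' waiting at the end of your argument. What is actually needed is the theorem of van Geemen and Nygaard that the two-dimensional piece of $H^3$ of (a resolution of) this rigid threefold carries the Galois representation attached to $f$; this is a genuinely hard prior result which \cite{AO} cites rather than proves, established via theta series and Siegel modular threefolds rather than CM. As written, your proposal leaves its deepest ingredient both unsupplied and mischaracterized; with that external theorem correctly invoked, the remainder of your plan is the standard and correct route.
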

\noindent It is interesting to note the analogue between the parameters in the hypergeometric series in Theorem~\ref{thm_AO1}. ``One over two'' has been replaced with a character of order 2 and ``one over one'' has been replaced with a character of order one. 
This approach of using the Gaussian hypergeometric series as an intermediate step has since become the template for proving these types of supercongruences and the analogue between the parameters of the generalized and Gaussian hypergeometric series has also been in evidence in these cases.

In \cite{R} Rodriguez-Villegas examined the relationship between the number of points over $\mathbb{F}_p$ on certain Calabi-Yau manifolds and truncated hypergeometric series which correspond to a particular period of the manifold. 
In doing so, he identified numerically 22 possible supercongruences. 18 of these relate truncated generalized hypergeometric series to the $p$-th Fourier coefficient of certain modular forms via modulo $p^2$ and $p^3$ congruences. 
Two of the 18 have been proven outright \cite{A, K} with three more established for primes in a particular congruence class and up to sign otherwise \cite{M}, all using Gaussian hypergeometric series as an intermediate step.
 (We note that the case proved in \cite{A} had previously been established in \cite{VH} by other means.) We now consider one of the outstanding conjectures of Rodriguez-Villegas. Let 
\begin{equation}\label{for_ModForm}
f(z):= f_1(z)+5f_2(z)+20f_3(z)+25f_4(z)+25f_5(z)=\sum_{n=1}^{\infty} c(n) q^n
\end{equation}
where $f_i(z):=\eta^{5-i}(z) \hspace{2pt} \eta^4(5z) \hspace{2pt} \eta^{i-1}(25z)$, $\eta(z):=q^{\frac{1}{24}} \prod_{n=1}^{\infty}(1-q^n)$ is the Dedekind eta function and $q:=e^{2 \pi i z}$. Then $f$ is a cusp form of weight four on the congruence subgroup $\Gamma_0(25)$ and we have the following conjecture. 
\begin{conj}[Rodriguez-Villegas \cite{R}]\label{thm_DMCMain}
If $p \neq 5$ is prime and $c(p)$ is as defined in (\ref{for_ModForm}), then 
\begin{equation}\label{RV_Conj}
{_4F_3} \Biggl[ \begin{array}{cccc} \frac{1}{5}, & \frac{2}{5}, & \frac{3}{5}, & \frac{4}{5} \\
\phantom{\frac{1}{5},} & 1, & 1, & 1\end{array}
\bigg| \; 1 \Biggr]_{p-1}
\equiv c(p) \pmod {p^3}.
\end{equation}
\end{conj}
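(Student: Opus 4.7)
The plan is to reproduce the two-step Ahlgren--Ono template used in Theorem~\ref{thm_AO1} and its companion, with the classical Gaussian hypergeometric series replaced throughout by the new $p$-adic extension (call it $G_p$) introduced earlier in the paper. This substitution is essential because Greene's series, being indexed by characters of prescribed order, is not defined unless $p \equiv 1 \pmod{5}$, whereas the $p$-adic function is defined for every prime $p \neq 5$ and is therefore the natural intermediate object for the full conjecture.

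First I would establish a mod $p^3$ analogue of Theorem~\ref{thm_AO1}, of the shape
\begin{equation*}
{_4F_3}\!\left[\tfrac{1}{5},\tfrac{2}{5},\tfrac{3}{5},\tfrac{4}{5};\,1,1,1;\,1\right]_{p-1} \equiv \alpha_p \cdot G_p + \beta_p \pmod{p^3},
\end{equation*}
where $G_p$ denotes the $p$-adic $_4F_3$ with numerator parameters corresponding to $\tfrac{1}{5},\tfrac{2}{5},\tfrac{3}{5},\tfrac{4}{5}$ and trivial denominator parameters, and $\alpha_p,\beta_p$ are explicit (expected to be of shape $-p^3$ and a low-order polynomial in $p$). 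The mechanism is to rewrite every rising factorial $\ph{a}{n}$ on the left via Morita's $p$-adic Gamma function as $(-1)^n\,\gfp{a+n}/\gfp{a}$ up to $p$-adic unit normalizations, match the resulting $\Gamma_p$-product summand by summand against the summands packaged into $G_p$ by construction, and control the mod $p^3$ error through the Artin--Hasse type logarithmic expansion of $\Gamma_p$; in the split case $p\equiv 1\pmod{5}$ this can be cross-checked against Greene's quintic-character series via Gross--Koblitz.

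Second I would prove the arithmetic identification $\alpha_p G_p + \beta_p = c(p)$. For $p \equiv 1 \pmod{5}$, Rodriguez-Villegas's geometric setup identifies the Gaussian $_4F_3$ with quintic numerator characters, through point counts on the Calabi--Yau threefold whose period is $_4F_3[\tfrac{1}{5},\tfrac{2}{5},\tfrac{3}{5},\tfrac{4}{5};1,1,1;z]$ and Eichler--Shimura modularity of $f$, with $c(p)$; the congruence in the first step then transfers this identification to $G_p$. For $p \not\equiv 1 \pmod{5}$ no character of order $5$ exists, so one must work with $G_p$ directly: here the strategy is to compute the trace of Frobenius on the appropriate piece of the middle cohomology of the Calabi--Yau variety in $p$-adic (Dwork) terms, to show that it equals $\alpha_p G_p + \beta_p$ on the nose, and to conclude $= c(p)$ via modularity.

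The main obstacle is twofold. First, the upgrade from the mod $p^2$ congruence in Theorem~\ref{thm_AO1} to a mod $p^3$ congruence forces one to carry an additional order of $p$ through the $\Gamma_p$ expansion and through the Pochhammer/binomial manipulations, producing new boundary terms that must cancel. Second, and more seriously, the non-split case $p \not\equiv 1 \pmod{5}$ has no character-theoretic handle; the identification $G_p \leftrightarrow c(p)$ must be produced from the $p$-adic cohomology of the Dwork pencil without the Gaussian hypergeometric intermediary that made all previous supercongruences of this kind tractable. I expect this second point to be the genuine obstruction.
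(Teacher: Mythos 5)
The first thing to note is that the statement you are addressing is labelled a \emph{Conjecture} in this paper, and the paper does not prove it. Its contribution toward it is precisely the framework you describe in your first step: Theorem~\ref{thm_4G2} specialized to $d=5$, $r=2$, whose hypotheses cover \emph{every} prime $p\neq 5$ because $2^2\equiv -1\pmod 5$, gives ${_{4}G}\bigl(\tfrac{1}{5},\tfrac{2}{5},\tfrac{3}{5},\tfrac{4}{5}\bigr)_p \equiv {}_4F_3\bigl[\cdots\bigr]_{p-1}+s(p)\,p \pmod{p^3}$, where the ${}_4F_3$ is the truncated series on the left of (\ref{RV_Conj}); the remaining identification of ${_{4}G}\bigl(\tfrac{1}{5},\tfrac{2}{5},\tfrac{3}{5},\tfrac{4}{5}\bigr)_p - s(p)\,p$ with $c(p)$ is deferred to \cite{DMC2}. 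Measured against that, the first half of your plan is the paper's route: replace Greene's series by a $p$-adic function defined for all $p\nmid d$, and prove the mod $p^3$ congruence with the truncated series by expanding $\gfp{\cdot}$ to second order in its logarithmic derivatives. One normalization correction: the constants are not of the shape ``$\alpha_p\sim -p^3$ plus a polynomial in $p$.'' The factor $-p^3$ is already absorbed into the definition of ${_{4}G}$ (it resurfaces only when $p\equiv 1\pmod 5$, via Proposition~\ref{prop_GtoGHS}), and the correction term is $s(p)\,p$ with $s(p)=\gfp{\tfrac{1}{5}}\gfp{\tfrac{2}{5}}\gfp{\tfrac{3}{5}}\gfp{\tfrac{4}{5}}=\pm1$, not a generic low-order polynomial.

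The genuine gap is your second step. Establishing $\alpha_p G_p+\beta_p=c(p)$, i.e.\ ${_{4}G}\bigl(\tfrac{1}{5},\tfrac{2}{5},\tfrac{3}{5},\tfrac{4}{5}\bigr)_p - s(p)\,p\equiv c(p)\pmod{p^3}$ for all $p\neq 5$, is the arithmetic heart of the conjecture, and your proposal only gestures at it: ``compute the trace of Frobenius in $p$-adic (Dwork) terms, show that it equals $\alpha_p G_p+\beta_p$ on the nose, and conclude via modularity'' is a statement of what needs to be true, not an argument. You supply no mechanism for producing a character-sum or $p$-adic-cohomological expression for the relevant point counts that remains valid when no character of order $5$ exists, nor for matching such an expression term-by-term against the $\Gamma_p$-products in Definition~\ref{def_GFn} (this is exactly where the Gross--Koblitz formula and the specific shape of ${_{n+1}G}$ must be exploited, and it is the content of \cite{DMC2}, not of this paper). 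You correctly flag this as ``the genuine obstruction,'' but flagging it does not discharge it. As written, your proposal reduces the conjecture to the paper's Theorem~\ref{thm_4G2} plus an unproven identification; it is an accurate reading of the intended strategy, but it is not a proof.
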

\noindent Using the approach of Ahlgren and Ono we would expect to be able to relate the truncated hypergeometric series on the left hand side of (\ref{RV_Conj}) to the Gaussian hypergeometric series
\begin{equation*}
_4F_3 \Biggl( \begin{array}{cccc} \chi_5, & \chi_5^2, & \chi_5^3, & \chi_5^4 \\
\phantom{\chi_5} & \varepsilon, & \varepsilon, & \varepsilon \end{array}
\bigg| \; 1 \Biggr)_p,
\end{equation*}
where $\chi_5$ is a character of order $5$ of $\mathbb{F}_p^*$. However this series is only defined for $p \equiv 1 \pmod 5$ which would restrict any eventual results. A similar restriction was also encountered by the author of \cite{M}. 
This issue did not affect the authors of \cite{A, K} as all top line parameters in their cases were characters of order 2 thus restricting the series to odd primes which was all that was required. Many of the other applications of these series also encounter these restrictions. 
For example the results in \cite{F2} are restricted to primes congruent to $1$ modulo $12$. We would like to develop some generalization of Gaussian hypergeometric series which does not have such restrictions implicit in its definition. 
We achieve this by reformulating the series as an expression in terms of the $p$-adic gamma function which we can then extend in the $p$-adic setting.

\section{Statement of Results}

The Gaussian hypergeometric series which occur in the applications of the series referenced in Section 1 have all been of a certain form. All top line characters, $A_i$, have been non-trivial, all bottom line characters, $B_j$, have been trivial and the argument $x$ has equaled 1. The Gaussian hypergeometric series analogous to the classical series in the supercongruence conjectures of Rodriguez-Villegas would also be of this form. 

Therefore, our starting point for extending Gaussian hypergeometric series is to examine series of this type. In a similar manner to \cite{M2, M}, using the relationship between Jacobi and Gauss sums, and the Gross-Koblitz formula \cite{GK}, we can express the series in terms of the $p$-adic gamma function. 
For a positive integer $n$, let $m_1, m_2, \dots, m_{n+1}, d_1, d_2, \dots, d_{n+1}$ also be positive integers, such that $0<\frac{m_i}{d_i}< 1$ for $1 \leq i \leq n+1$. 
Let $p$ be a prime such that $p \equiv 1 \pmod {d_i}$ for each $i$ and let $\rho_i$ be the character of order $d_i$ of $\mathbb{F}_p^*$ given by $\bar{\omega}^{\frac{p-1}{d_i}}$, where $\omega$ is the Teichm\"{u}ller character. 
Without loss of generality we can assume $0<\frac{m_1}{d_1} \leq \frac{m_2}{d_2} \leq \dots \leq \frac{m_{n+1}}{d_{n+1}}<1$. We define $r_i:=\frac{p-1}{d_i}$ for brevity. 
Then we also define $m_0:=-1$, $m_{n+2}:=p-2$ and $d_0=d_{n+2}:=p-1$ so that $m_0r_0=-1$ and $m_{n+2}r_{n+2}=p-2$. If $\gfp{\cdot}$ is the $p$-adic gamma function, then we have
\begin{align}\label{for_GHStoP}
&(-1)^n \: p^n \:  {_{n+1}F_n}  {\left( \begin{array}{cccc} \rho_1^{m_1}, & \rho_2^{m_2}, & \dotsc, & \rho_{n+1}^{m_{n+1}} \\
\phantom{\rho_1^{m_1}} & \varepsilon, & \dotsc, & \varepsilon \end{array}
\Big| \; 1 \right)}_{p}\\
\notag &= \frac{-1}{p-1} \sum_{k=0}^{n+1} (-p)^k \sum_{j=m_kr_k+1}^{m_{k+1}r_{k+1}} \omega^{j(n+1)}(-1) 
{\biggfp{\tfrac{j}{p-1}}}^{n+1}
\prod_{\substack{i=1\\i>k}}^{n+1} \frac{\biggfp{\frac{m_i}{d_i}-\frac{j}{p-1}}}{\biggfp{\frac{m_i}{d_i}}}
\prod_{\substack{i=1\\i\leq k}}^{n+1} \frac{\biggfp{\frac{d_i+m_i}{d_i}-\frac{j}{p-1}}}{\biggfp{\frac{m_i}{d_i}}}.
\end{align}

Let $\left\lfloor x \right\rfloor$ denote the greatest integer less than or equal to $x$ and let $\langle x \rangle$ be the fractional part of $x$, i.e. $x- \left\lfloor x \right\rfloor$. We then define the following generalization.
\begin{defi}\label{def_GFn}
Let $p$ be an odd prime and let $n \in \mathbb{Z}^{+}$. For $1 \leq i \leq n+1$, let $\frac{m_i}{d_i} \in \mathbb{Q} \cap \mathbb{Z}_p$ such that $0<\frac{m_i}{d_i}<1$. Then define
\begin{equation*}
{_{n+1}G} \left( \tfrac{m_1}{d_1}, \tfrac{m_2}{d_2}, \dotsc, \tfrac{m_{n+1}}{d_{n+1}} \right)_p
:= \frac{-1}{p-1}  \sum_{j=0}^{p-2} 
{\left((-1)^j\biggfp{\tfrac{j}{p-1}}\right)}^{n+1} 
\prod_{i=1}^{n+1} \frac{\biggfp{\langle \frac{m_i}{d_i}-\frac{j}{p-1}\rangle}}{\biggfp{\frac{m_i}{d_i}}}
(-p)^{-\lfloor{\frac{m_i}{d_i}-\frac{j}{p-1}\rfloor}}.
\end{equation*}
\end{defi}
\noindent
We first note that ${_{n+1}G} \in \mathbb{Z}_p$. We also note that the function is defined at all primes not dividing the $d_i$. Combining (\ref{for_GHStoP}) and Definition \ref{def_GFn}, one can easily see that we recover the Gaussian hypergeometric series via the following result.
\begin{prop}\label{prop_GtoGHS}
Let $n \in \mathbb{Z}^{+}$ and, for $1 \leq i \leq n+1$, let $\frac{m_i}{d_i} \in \mathbb{Q}$ such that $0<\frac{m_i}{d_i}<1$. Let $p \equiv 1 \pmod {d_i}$, for each $i$, be prime and let $\rho_i$ be the character of order $d_i$ of $\mathbb{F}_p^*$ given by $\bar{\omega}^{\frac{p-1}{d_i}}$. Then
\begin{align*}
{_{n+1}G} \left( \tfrac{m_1}{d_1}, \tfrac{m_2}{d_2}, \dotsc, \tfrac{m_{n+1}}{d_{n+1}} \right)_p
=(-1)^n \: p^n \:  {_{n+1}F_n}  {\Bigg( \begin{array}{cccc} \rho_1^{m_1}, & \rho_2^{m_2}, & \dotsc, & \rho_{n+1}^{m_{n+1}} \\
\phantom{\rho_1^{m_1}} & \varepsilon, & \dotsc, & \varepsilon \end{array}
\bigg| \; 1 \Biggr)}_{p} \; \fffs
\end{align*}
\end{prop}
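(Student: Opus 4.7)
The plan is to derive the proposition directly from equation (\ref{for_GHStoP}) by showing that the single unrestricted sum in Definition \ref{def_GFn} reorganizes, via a partition of the index $j$, into the nested sum on the right-hand side of (\ref{for_GHStoP}). Concretely, I would partition the index set $\{0, 1, \dots, p-2\}$ into $n+2$ blocks
\[
B_k := \bigl\{ j \in \mathbb{Z} : m_k r_k < j \leq m_{k+1} r_{k+1} \bigr\}, \qquad 0 \leq k \leq n+1,
\]
which is a valid partition because of the ordering $0 < \frac{m_1}{d_1} \leq \cdots \leq \frac{m_{n+1}}{d_{n+1}} < 1$ together with the conventions $m_0 r_0 = -1$ and $m_{n+2} r_{n+2} = p-2$. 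Note that each $m_i r_i$ is an integer since $p \equiv 1 \pmod{d_i}$, which is what makes these endpoints meaningful.

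Next I would analyse the floor function for $j \in B_k$. Because $\frac{m_i}{d_i} \in (0,1)$ and $\frac{j}{p-1} \in [0, \frac{p-2}{p-1}]$, the quantity $\frac{m_i}{d_i} - \frac{j}{p-1}$ lies in $(-1, 1)$, so its floor is either $0$ or $-1$. A short check shows that the floor equals $0$ precisely when $j \leq m_i r_i$, and $-1$ otherwise. By the monotonicity assumption, for $j \in B_k$ this gives
\[
\Big\lfloor \tfrac{m_i}{d_i} - \tfrac{j}{p-1} \Big\rfloor = \begin{cases} 0 & i > k, \\ -1 & i \leq k. \end{cases}
\]
Consequently $\prod_{i=1}^{n+1} (-p)^{-\lfloor m_i/d_i - j/(p-1) \rfloor} = (-p)^k$, and the fractional parts in the numerator of each $\Gamma_p$ ratio become $\frac{m_i}{d_i} - \frac{j}{p-1}$ for $i > k$ and $\frac{d_i + m_i}{d_i} - \frac{j}{p-1}$ for $i \leq k$.

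Finally I would handle the sign. Since $p$ is odd, $\omega(-1) = -1$ (it is the unique $(p-1)$st root of unity in $\mathbb{Z}_p$ congruent to $-1$ mod $p$), so $\left((-1)^j\right)^{n+1} = (-1)^{j(n+1)} = \omega^{j(n+1)}(-1)$. Substituting all of this into Definition \ref{def_GFn}, the sum over $j \in \{0, \dots, p-2\}$ breaks up as
\[
{_{n+1}G} = \frac{-1}{p-1} \sum_{k=0}^{n+1} (-p)^k \sum_{j \in B_k} \omega^{j(n+1)}(-1) \, \Gamma_p\!\Big(\tfrac{j}{p-1}\Big)^{n+1} \prod_{\substack{i=1\\i>k}}^{n+1} \frac{\Gamma_p(\frac{m_i}{d_i} - \frac{j}{p-1})}{\Gamma_p(\frac{m_i}{d_i})} \prod_{\substack{i=1\\i\leq k}}^{n+1} \frac{\Gamma_p(\frac{d_i+m_i}{d_i} - \frac{j}{p-1})}{\Gamma_p(\frac{m_i}{d_i})},
\]
which is exactly the right-hand side of (\ref{for_GHStoP}). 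The claim follows.

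The step requiring the most care is the bookkeeping that identifies the blocks $B_k$ with the sub-sums in (\ref{for_GHStoP}), in particular verifying the boundary cases $k = 0$ and $k = n+1$ via the conventions $m_0 r_0 = -1$ and $m_{n+2} r_{n+2} = p-2$; there is no conceptual obstacle, only the need to keep the $p$-adic gamma arguments and signs straight.
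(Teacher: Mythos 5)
Your proposal is correct and is precisely the verification the paper intends: the paper offers no explicit proof beyond ``combining (\ref{for_GHStoP}) and Definition \ref{def_GFn}, one can easily see...'', and your partition of $\{0,\dots,p-2\}$ into the blocks $B_k$, the case analysis of the floor, and the identification $\omega^{j(n+1)}(-1)=(-1)^{j(n+1)}$ are exactly the bookkeeping that claim relies on.
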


\noindent 
Using this proposition, the $_{n+1}G$ function can be used in place of Greene's function in most applications and should allow results to be extended to a wider class of primes in many cases. We demonstrate this here by considering its relationship with the classical series. 

One of the main results offering congruences between generalized and Gaussian hypergeometric series has been Theorem 1 in \cite{M}. This theorem relates the same Gaussian hypergeometric series as appears on the right hand side of Proposition \ref{prop_GtoGHS} to a truncated generalized hypergeometric series via a modulo $p^2$ congruence. Therefore, applying Proposition \ref{prop_GtoGHS} to Theorem 1 in \cite{M} we get the following result.
\begin{theorem}\label{thm_DMCMortTrunc}
Let $n \in \mathbb{Z}^{+}$ and, for $1 \leq i \leq n+1$, let $\frac{m_i}{d_i} \in \mathbb{Q}$ such that $0<\frac{m_i}{d_i}<1$. Let $p \equiv 1 \pmod {d_i}$, for each $i$, be prime and let $\rho_i$ be the character of order $d_i$ of $\mathbb{F}_p^*$ given by $\bar{\omega}^{\frac{p-1}{d_i}}$. If $S:=\sum_{i=1}^{n+1} \frac{m_i}{d_i} \geq n-1$ and $\delta:=\prod_{i=1}^{n+1} \biggfp{1-\frac{m_i}{d_i}}$ when $S=n-1$ and zero otherwise, then
\begin{equation*}
{_{n+1}G} \left( \tfrac{m_1}{d_1}, \tfrac{m_2}{d_2}, \dotsc, \tfrac{m_{n+1}}{d_{n+1}} \right)_p
\equiv  
{_{n+1}F_n} \Biggl[ \begin{array}{cccc} \frac{m_1}{d_1}, & \frac{m_2}{d_2}, & \dotsc, & \frac{m_{n+1}}{d_{n+1}} \vspace{.05in}\\
\phantom{\frac{m_1}{d_1}} & 1, & \dotsc, & 1 \end{array}
\bigg| \; 1 \Biggr]_{p-1}
+ \delta \cdot p \pmod {p^2}.
\end{equation*}
\end{theorem}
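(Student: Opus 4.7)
The plan is a direct combination of Proposition~\ref{prop_GtoGHS} with the main congruence theorem of \cite{M}: first rewrite the $_{n+1}G$ function as a (normalized) Gaussian hypergeometric series, then invoke the existing modulo $p^2$ result to pass to the truncated classical series.

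Specifically, I would apply Proposition~\ref{prop_GtoGHS}, whose hypothesis $p\equiv 1\pmod{d_i}$ for each $i$ is exactly what is assumed here. This gives
\begin{equation*}
{_{n+1}G}\left(\tfrac{m_1}{d_1},\dotsc,\tfrac{m_{n+1}}{d_{n+1}}\right)_p
=(-1)^n\,p^n\,{_{n+1}F_n}{\Bigg(\begin{array}{cccc} \rho_1^{m_1}, & \rho_2^{m_2}, & \dotsc, & \rho_{n+1}^{m_{n+1}} \\ \phantom{\rho_1^{m_1}} & \varepsilon, & \dotsc, & \varepsilon \end{array}\bigg|\;1\Biggr)}_p.
\end{equation*}
Theorem~1 of \cite{M} then asserts precisely that, under the same hypotheses (including $S\geq n-1$), the right-hand side is congruent modulo $p^2$ to the truncated classical series ${_{n+1}F_n}[\tfrac{m_1}{d_1},\dotsc,\tfrac{m_{n+1}}{d_{n+1}};1,\dotsc,1;1]_{p-1}$ plus the correction term $\delta\cdot p$ (with $\delta$ defined via the same product of $p$-adic gamma values when $S=n-1$ and zero otherwise). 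Chaining the equality with the congruence yields the claim.

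The main work is therefore entirely bundled inside the cited result, and the theorem follows by citation once Proposition~\ref{prop_GtoGHS} has been established. If one were to unpack the proof of Theorem~1 of \cite{M}, the essential ingredient would be the identity (\ref{for_GHStoP}): one splits the outer sum according to the intervals $j\in(m_k r_k,\,m_{k+1}r_{k+1}]$, shows that modulo $p^2$ only the $k=0$ block contributes nontrivially (and, via the $p$-adic interpretation of rising factorials in terms of $\gfp{\cdot}$, this block reproduces the truncated classical series), and then tracks the boundary contributions that force the $\delta\cdot p$ correction exactly in the borderline case $S=n-1$. This $p$-adic gamma analysis is the technical heart of \cite{M}, but for the present theorem it is available off the shelf, so only a mechanical substitution remains.
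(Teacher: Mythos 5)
Your proposal matches the paper's own derivation exactly: the paper obtains Theorem~\ref{thm_DMCMortTrunc} precisely by applying Proposition~\ref{prop_GtoGHS} to Theorem~1 of \cite{M}, with all the substantive $p$-adic analysis delegated to that cited result. Your additional sketch of how one would unpack \cite{M} via (\ref{for_GHStoP}) is a reasonable gloss but is not needed, since the paper treats the citation as sufficient.
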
 

\noindent The main results of this paper establish $4$ families of congruences between the $_{n+1}G$ function and truncated generalized hypergeometric series which extend Theorem \ref{thm_DMCMortTrunc} to primes in other congruence classes, as follows.
\begin{theorem}\label{thm_2G}
Let $2 \leq d \in \mathbb{Z}$ and let $p$ be an odd prime such that $p\equiv \pm1 \imod d$. Then
\begin{align*}
{_{2}G} \Bigl( \tfrac{1}{d}, 1-\tfrac{1}{d} \Bigr)_p
&\equiv
{_{2}F_1} \Biggl[ \begin{array}{cc} \frac{1}{d}, & 1-\frac{1}{d}\vspace{.05in}\\
\phantom{\frac{1}{d}} & 1 \end{array}
\bigg| \; 1 \Biggr]_{p-1}
\pmod {p^2} .
\end{align*}
\end{theorem}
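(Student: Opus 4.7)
When $p \equiv 1 \pmod d$, the statement is a direct specialization of Theorem \ref{thm_DMCMortTrunc}: setting $n = 1$ and $(\tfrac{m_1}{d_1}, \tfrac{m_2}{d_2}) = (\tfrac{1}{d}, 1-\tfrac{1}{d})$, we have $S = 1 > 0 = n - 1$ so $\delta = 0$, and the congruence is exactly what is claimed. Hence the new content lies in the case $p \equiv -1 \pmod d$ (nontrivial only for $d \geq 3$), which I assume henceforth.

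Starting from Definition \ref{def_GFn} with $n = 1$, I would split the sum over $j \in \{0, 1, \dots, p-2\}$ according to the pair of floors $\bigl(\lfloor \tfrac{1}{d} - \tfrac{j}{p-1}\rfloor, \lfloor 1-\tfrac{1}{d} - \tfrac{j}{p-1}\rfloor\bigr)$, each of which is $0$ or $-1$. Because $\tfrac{1}{d} \leq 1 - \tfrac{1}{d}$, only three such pairs actually occur, partitioning the summation into three consecutive ranges. Range A is $0 \leq j \leq \lfloor (p-1)/d \rfloor$, with both floors equal to $0$ and no explicit factor of $p$; Range B is $\lfloor (p-1)/d \rfloor < j \leq \lfloor (d-1)(p-1)/d\rfloor$, with the first floor equal to $-1$ and an explicit factor of $p$; and Range C consists of the remaining $j$, with both floors $-1$ and an explicit factor of $p^2$. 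Since $\Gamma_p$ takes values in $\mathbb{Z}_p^{*}$, the terms of Range C all lie in $p^2 \mathbb{Z}_p$ and can be discarded modulo $p^2$.

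The heart of the proof is then to show that Ranges A and B together reproduce the truncated $_{2}F_{1}$ modulo $p^2$. For Range A I would use the reflection identity $\Gamma_p(x)\Gamma_p(1-x) = \pm 1$ to cancel the normalizing denominator $\Gamma_p(\tfrac{1}{d})\Gamma_p(1-\tfrac{1}{d})$, and then use the $p$-adic expansion $\tfrac{j}{p-1} \equiv -j - jp \pmod{p^2}$ together with a first-order $p$-adic Taylor expansion of $\Gamma_p$ to replace each $\Gamma_p$ factor by its value at an integer shift of $\tfrac{1}{d}$ or $1-\tfrac{1}{d}$, plus an explicit $O(p)$ correction involving the $p$-adic logarithmic derivative of $\Gamma_p$. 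Iterating the recurrence $\Gamma_p(x+1) = -x\,\Gamma_p(x)$ on units then converts the resulting ratios of $\Gamma_p$ into classical Pochhammer symbols. For Range B, each summand already lies in $p\mathbb{Z}_p$, so only the mod-$p$ values of the $\Gamma_p$ factors matter; after a reindexing that sends $j$ to the classical summation variable $n$, Wilson-type identities of the form $\Gamma_p(n) \equiv (-1)^n (n-1)! \pmod p$ at integer arguments should identify these contributions with the part of the truncated $_{2}F_{1}$ in which exactly one of $(\tfrac{1}{d})_n$ or $(1-\tfrac{1}{d})_n$ is divisible by $p$; note that for $p \equiv -1 \pmod d$, this divisibility first occurs at $n = (p+1)/d$, which is precisely the boundary between Ranges A and B.

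The principal obstacle is the mod-$p^2$ reconciliation of the $O(p)$ remainder from the Taylor expansion in Range A with the leading contributions from Range B. The two ranges do not line up one-to-one with the subsets of classical summands of a given $p$-adic size, so the $O(p)$ remainder from Range A must absorb exactly those $O(p)$-size classical terms not captured directly by Range B. Carrying out this matching term-by-term, and showing that the $p$-adic derivatives of $\Gamma_p$ appearing in the Range A correction reorganize into the missing Pochhammer quotients modulo $p^2$, is the delicate technical core; once established, the sum of Ranges A and B reproduces $\sum_{n=0}^{p-1}\frac{(\tfrac{1}{d})_n (1-\tfrac{1}{d})_n}{(n!)^2}$ modulo $p^2$ and the theorem follows.
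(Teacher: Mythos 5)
Your setup is sound and matches the paper's strategy for all four theorems: split the sum in Definition \ref{def_GFn} according to the values of the floors into ranges weighted by $1$, $-p$ and $p^2$, discard the $p^2$ range, expand $\gfp{\langle m_k - \tfrac{j}{p-1}\rangle} = \gfp{m_k + j + jp + \dotsb}$ about the integer shift via Proposition \ref{prop_pGammaCong}, convert ratios of $\Gamma_p$ into Pochhammer symbols via Proposition \ref{prop_gammapmd}, and compare with the truncated series, which by the same proposition also decomposes into a unit-size block and a $p$-divisible block with the break at exactly the Range A/B boundary. Your disposal of the case $p\equiv 1\pmod d$ via Theorem \ref{thm_DMCMortTrunc} is also legitimate, although the paper's method treats both residue classes uniformly.

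The gap is precisely the step you flag as the ``delicate technical core'': you never prove that the residual --- the $O(p)$ correction from Range A involving the logarithmic derivative $G_1$, together with the mismatch between Range B and the $p$-divisible classical terms --- vanishes modulo $p^2$. Moreover, the mechanism you propose, a term-by-term matching in which the $G_1$ corrections ``reorganize into the missing Pochhammer quotients,'' is not how the cancellation happens and cannot be made to work locally. After Lemma \ref{lem_SumGammaG1} converts the differences $G_1(m_k+j)-G_1(1+j)$ into harmonic sums, the residual takes the form
\begin{equation*}
\sum_{j=0}^{p-1}\Bigl[\ph{j+1}{a_1}\ph{j+1}{a_2}\Bigr]\Bigl[1 + j\bigl(H^{(1)}_{a_1+j}+H^{(1)}_{a_2+j}-2H^{(1)}_j\bigr)\Bigr], \qquad a_i = \rep(m_i)-1,
\end{equation*}
whose individual terms are nonzero modulo $p$; the sum vanishes only globally, because it equals $\sum_{j=0}^{p-1}\tfrac{d}{dj}\bigl[j\,\ph{j+1}{a_1}\ph{j+1}{a_2}\bigr]$ and $a_1+a_2=\rep(m_1)+\rep(m_2)-2=p-1<2(p-1)$, so the power-sum identity $\sum_{j=1}^{p-1}j^k\equiv 0\pmod p$ for $(p-1)\nmid k$, applied coefficientwise as in Lemma \ref{lem_P}, gives $0$. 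This combinatorial cancellation (Section 3.2, supplemented for the $_4G$ theorems by the binomial--harmonic identities of Section 3.3) is the essential ingredient your outline is missing; without it the proof does not close.
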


\begin{theorem}\label{thm_3G}
Let $2 \leq d \in \mathbb{Z}$ and let $p$ be an odd prime such that $p\equiv \pm1 \imod d$. Then
\begin{align*}
{_{3}G} \Bigl( \tfrac{1}{2}, \tfrac{1}{d} , 1-\tfrac{1}{d} \Bigr)_p
&\equiv
{_{3}F_2} \Biggl[ \begin{array}{ccc} \frac{1}{2}, & \frac{1}{d}, & 1-\frac{1}{d}\vspace{.05in}\\
\phantom{\frac{1}{d}} & 1, &1 \end{array}
\bigg| \; 1 \Biggr]_{p-1}
\pmod {p^2} .
\end{align*}
\end{theorem}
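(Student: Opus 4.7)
The plan is to separate the two cases $p \equiv 1 \pmod{d}$ and $p \equiv -1 \pmod{d}$. The first case falls out of Theorem~\ref{thm_DMCMortTrunc}: I would apply it with $n=2$ and parameter tuple $(\tfrac{1}{2}, \tfrac{1}{d}, 1-\tfrac{1}{d})$, observing that oddness of $p$ together with $p \equiv 1 \pmod{d}$ supplies the required hypothesis $p \equiv 1 \pmod{d_i}$ for $d_1=2$ and $d_2=d_3=d$. Since
\[
S = \tfrac{1}{2} + \tfrac{1}{d} + \bigl(1-\tfrac{1}{d}\bigr) = \tfrac{3}{2} > 1 = n-1,
\]
the correction term $\delta$ vanishes and the congruence is immediate.

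The real content lies in the case $p \equiv -1 \pmod{d}$ with $d \geq 3$ (for $d=2$ the parameters degenerate to $(\tfrac{1}{2},\tfrac{1}{2},\tfrac{1}{2})$ and every odd prime is already $1 \pmod 2$). Since no character of order $d$ lives in $\mathbb{F}_p^*$, neither the Gaussian series interpretation nor Theorem~\ref{thm_DMCMortTrunc} is directly available, so I would work from Definition~\ref{def_GFn}. The structural observation driving the argument is that the pair $\tfrac{1}{d},\,1-\tfrac{1}{d}$ is symmetric under $x \mapsto 1-x$, so the two corresponding $\Gamma_p$-factors in the summand of $_3G$ should collapse against each other via the Morita reflection relation $\Gamma_p(x)\Gamma_p(1-x) = (-1)^{a_0(x)}$ applied to the shifted arguments $\langle \tfrac{1}{d}-\tfrac{j}{p-1}\rangle$ and $\langle 1-\tfrac{1}{d}-\tfrac{j}{p-1}\rangle$.

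Concretely, I would partition $\{0,1,\dots,p-2\}$ into blocks on which each floor $\lfloor \tfrac{m_i}{d_i}-\tfrac{j}{p-1}\rfloor$ is constant, so that the factor $(-p)^{-\lfloor\cdot\rfloor}$ is an explicit $1$ or $-p$ throughout each block. Because the total weight $S=\tfrac{3}{2}$ is large, only blocks in which at most one of the three floors equals $-1$ contribute modulo $p^2$. On each surviving block I would use reflection to eliminate the $\tfrac{1}{d}$ and $1-\tfrac{1}{d}$ data in favour of a sign, then convert the remaining ratios of $\Gamma_p$-values into Pochhammer-type quantities modulo $p^2$ by means of $\Gamma_p(x+1) = -x\,\Gamma_p(x)$ (for $|x|_p = 1$) and the first-order Taylor expansion of $\Gamma_p$ about Teichm\"uller points. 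After a re-indexing of the block parameter that mirrors the involution $j \mapsto p-1-j$ (the $p$-adic analogue of $\chi \mapsto \bar\chi$), summing block contributions should reproduce the classical series ${_3F_2}[\tfrac{1}{2},\tfrac{1}{d},1-\tfrac{1}{d};\,1,1;\,1]_{p-1}$ term by term.

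I expect the main obstacle to be sign and $p$-error bookkeeping at the block boundaries. Reflection collapses the $d$-factors only up to a sign depending on $a_0(\tfrac{j}{p-1})$ and on which of the breakpoints $\tfrac{1}{d},\tfrac{1}{2},1-\tfrac{1}{d}$ the index $j$ has crossed; these signs must combine correctly with the $(-1)^{3j}$ from the $\Gamma_p$-kernel and with the surviving $(-p)$ factors to produce the positive Pochhammer symbols $(\tfrac{1}{d})_n(1-\tfrac{1}{d})_n$. Verifying that these contributions line up modulo $p^{2}$ without leaving a stray $\delta$-type residue is where the hypothesis $p \equiv \pm 1 \pmod{d}$ is essentially used, and where the argument will require the most care.
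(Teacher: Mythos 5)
Your reduction of the case $p\equiv 1\pmod{d}$ to Theorem \ref{thm_DMCMortTrunc} is fine: with $n=2$ and parameters $\tfrac12,\tfrac1d,1-\tfrac1d$ one has $S=\tfrac32>n-1$, so $\delta=0$ and the congruence is immediate. The gap is in the case $p\equiv -1\pmod d$, where the mechanism you propose does not work. First, the reflection formula $\Gamma_p(x)\Gamma_p(1-x)=(-1)^{x_0}$ does not apply to the pair $\langle\tfrac1d-\tfrac{j}{p-1}\rangle$, $\langle 1-\tfrac1d-\tfrac{j}{p-1}\rangle$: these arguments sum to $1-\tfrac{2j}{p-1}$ modulo $1$, which is an integer only when $(p-1)\mid 2j$, so for a generic $j$ there is nothing to reflect. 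Second, and more fundamentally, you cannot afford to ``eliminate the $\tfrac1d$ and $1-\tfrac1d$ data in favour of a sign'': the target series contains $\ph{\frac1d}{j}\ph{1-\frac1d}{j}$, and no combination of signs, powers of $-p$, and the kernel ${\Gamma_p(\tfrac{j}{p-1})}^{3}$ can reconstitute these $d$-dependent rational numbers. In the method this paper actually uses (demonstrated on Theorem \ref{thm_4G2} and stated to apply to all four theorems), the symmetric pair is kept intact: after rewriting the summand in terms of $\Gamma_p(m_k+jt)$ with $t\equiv\tfrac{1}{1-p}$, Lemma \ref{lem_ProdGammap2} converts $\Gamma_p(x+j)\Gamma_p(1-x+j)$ into $\binom{\rep(x)-1+j}{j}\binom{\rep(x)-1}{j}$ times an order-$p$ harmonic-sum correction --- a binomial coefficient, not a sign. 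Reflection enters only inside the proof of that lemma, applied to the integer $\rep(m_1)-j$.

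The deeper omission is that nothing in your outline accounts for where the modulus $p^2$ is actually earned. By Lemma \ref{cor_repGenformula} the discrepancies $\rep(\tfrac1d)-\tfrac1d$ and $\rep(1-\tfrac1d)-(1-\tfrac1d)$ are nonzero elements of $p\,\mathbb{Z}_p$, so replacing $\tfrac1d$ by its residue forces, via the Taylor expansion of $\Gamma_p$ (Proposition \ref{prop_pGammaCong}), a correction of the form $p\cdot(\text{harmonic sums in }j)$ in \emph{every} summand; these corrections do not vanish termwise and must be shown to cancel in aggregate modulo $p^2$. That global cancellation is precisely what Lemmas \ref{lem_P} and \ref{lem_Q} and the binomial coefficient--harmonic sum identities of Theorems \ref{Cor_BinHarId1} and \ref{Cor_BinHarId2} supply, and it is the heart of the proof. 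Your proposal locates the difficulty at the ``block boundaries,'' but those contribute only finitely many easily-controlled terms; the genuine difficulty is an identity among harmonic sums over the full range of $j$, and the argument as sketched has no tool to establish it.
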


\begin{theorem}\label{thm_4G1}
Let $d_1, d_2 \geq 2$ be integers and let $p$ be an odd prime such that $p\equiv \pm1 \imod {d_1}$ and $p\equiv \pm1 \imod {d_2}$. If $s(p):=\biggfp{\frac{1}{d_1}}\biggfp{\frac{d_1-1}{d_1}}\biggfp{\frac{1}{d_2}}\biggfp{\frac{d_2-1}{d_2}}=(-1)^{\left\lfloor \frac{p-1}{d_1} \right\rfloor+\left\lfloor \frac{p-1}{d_2} \right\rfloor}$, then
\begin{equation*}
{_{4}G} \left(\tfrac{1}{d_1} , 1-\tfrac{1}{d_1}, \tfrac{1}{d_2} , 1-\tfrac{1}{d_2}\right)_p
\equiv
{_{4}F_3} \Biggl[ \begin{array}{cccc} \frac{1}{d_1}, & 1-\frac{1}{d_1}, & \frac{1}{d_2}, & 1-\frac{1}{d_2}\vspace{.05in}\\
\phantom{\frac{1}{d_1}} & 1, & 1, & 1 \end{array}
\bigg| \; 1 \Biggr]_{p-1}
+s(p)\hspace{1pt}p
\pmod {p^3}.
\end{equation*}
\end{theorem}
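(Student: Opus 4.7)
The plan is to evaluate ${_{4}G}\bigl(\tfrac{1}{d_1},1-\tfrac{1}{d_1},\tfrac{1}{d_2},1-\tfrac{1}{d_2}\bigr)_p$ directly from Definition~\ref{def_GFn} modulo $p^3$. Set $\alpha=\tfrac{1}{d_1}$ and $\beta=\tfrac{1}{d_2}$. For $0\leq j\leq p-2$ and each parameter $\tfrac{m_i}{d_i}\in\{\alpha,1-\alpha,\beta,1-\beta\}$, the floor $\lfloor \tfrac{m_i}{d_i}-\tfrac{j}{p-1}\rfloor$ takes only the values $0$ or $-1$, so the factor $(-p)^{-\lfloor\cdot\rfloor}$ contributes $1$ or $-p$. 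Terms in which three or four of these four floors equal $-1$ are of size $O(p^3)$ and can be discarded, so the sum partitions into a main piece (all four floors zero), four single-floor pieces of order $p$, and six double-floor pieces of order $p^2$, to be analyzed modulo $p^3$, $p^2$, and $p$ respectively.

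The main piece, after unfolding $\Gamma_p(\langle \tfrac{m_i}{d_i}-\tfrac{j}{p-1}\rangle)$ via $\Gamma_p(x+n)=(-1)^n(x)_n\Gamma_p(x)$ together with standard Taylor expansions of $\Gamma_p$ at integer points, should match the truncated $_{4}F_{3}$ series appearing on the right hand side; this is analogous to the derivation of Theorem~\ref{thm_DMCMortTrunc}, only carried out to one higher order in $p$. The four single-floor pieces pair up under the involution $j\leftrightarrow p-1-j$, which preserves $(-1)^j$ for odd $p$ and sends $\tfrac{j}{p-1}$ to $1-\tfrac{j}{p-1}$; together with the reflection identity $\Gamma_p(x)\Gamma_p(1-x)=(-1)^{a_0(x)}$ (where $a_0(x)$ denotes the first $p$-adic digit of $-x$), this symmetry reduces the four pieces to a single contribution of order $p$. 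The hypothesis $s(p)=(-1)^{\lfloor(p-1)/d_1\rfloor+\lfloor(p-1)/d_2\rfloor}$ is precisely what ensures the signs align so that this contribution is $+s(p)\cdot p$ rather than its negative, and the assumption $p\equiv\pm1\pmod{d_i}$ guarantees that $\tfrac{(p-1)m_i}{d_i}$ is close enough to an integer for the $\Gamma_p$ arguments to resolve onto the correct lattice when we Taylor expand.

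The principal obstacle, and the source of the modulo $p^3$ strength of this result relative to the $p^2$ statement of Theorem~\ref{thm_DMCMortTrunc}, lies in the six double-floor pieces together with the second-order remainders coming from the main and single-floor Taylor expansions. All of these are a priori of size $p^2$ and must be shown to cancel modulo $p^3$. The cancellation is again driven by the involution $j\leftrightarrow p-1-j$ together with two applications of the reflection identity, exploiting the fact that the parameters come in two reflective pairs $(\alpha,1-\alpha)$ and $(\beta,1-\beta)$; carrying it out requires expanding each $\Gamma_p$ factor to second order and keeping precise track of the first two $p$-adic log-gamma derivatives. This detailed bookkeeping, rather than any single conceptual step, is where the bulk of the technical work will sit, and the reflective symmetry of the parameters is exactly what makes it feasible.
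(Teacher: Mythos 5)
Your opening moves match the paper's: expand Definition~\ref{def_GFn}, observe that each floor $\lfloor \frac{m_i}{d_i}-\frac{j}{p-1}\rfloor$ is $0$ or $-1$, discard the ranges where three or four floors are $-1$, and Taylor-expand $\Gamma_p$ to second order. But two things go wrong. First, the decomposition is not into $1+4+6$ pieces indexed by subsets of parameters: since the parameters are ordered, the set of indices with floor $-1$ is always an initial segment, so the sum breaks into consecutive ranges of $j$ (all floors $0$, then exactly one floor $-1$, then exactly two), and most of your ten pieces are empty. Relatedly, the truncated ${_4F_3}$ is \emph{not} recovered from the main piece alone: by Proposition~\ref{prop_gammapmd} the terms of $\sum_j \prod_k (m_k)_j/j!^4$ themselves pick up explicit factors $p/d_1$ and $(p/d_1)(p/d_2)$ once $j$ passes $\rep(m_4)$ and $\rep(m_3)$, so the matching of the two sides has to be carried out range by range (compare the display (\ref{for_4F32}) in the proof of Theorem~\ref{thm_4G2}).

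The genuine gap is in how you propose to kill the residual order-$p$ and order-$p^2$ terms and to produce $s(p)\,p$. You attribute all of this to the involution $j\leftrightarrow p-1-j$ combined with the reflection formula. While the parameter multiset is indeed closed under $x\mapsto 1-x$, that symmetry at best reorganizes the sum; it does not evaluate it. In the paper the cancellations come from two non-formal inputs: the power-sum lemmas (Lemmas~\ref{lem_P} and~\ref{lem_Q}), which recognize the residual summand as $\frac{d}{dj}\bigl[j\prod_i(j+1)_{a_i}\bigr]$ (and its second derivative) and evaluate $\sum_{j=0}^{p-1}$ of it via $\sum_{j=1}^{p-1}j^k\equiv -1$ or $0\pmod p$; and the binomial coefficient--harmonic sum identities (Theorems~\ref{Cor_BinHarId1} and~\ref{Cor_BinHarId2}, imported from \cite{DMC3}), the first of which is precisely what produces the term $(-1)^{\rep(m_1)+\rep(m_2)}=s(p)$, hence the summand $s(p)\,p$. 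These identities are exact combinatorial statements whose proofs live in a separate paper; they are not consequences of a $j\leftrightarrow p-1-j$ pairing, and nothing in your sketch supplies a substitute for them. Until you either prove that your involution argument actually forces the $p^2$-level terms to cancel modulo $p^3$ (which I doubt it can, since the needed cancellation mixes $G_1$, $G_2$, harmonic sums of both orders, and binomial products in a way that is not antisymmetric under the involution) or invoke identities of the strength of Theorems~\ref{Cor_BinHarId1} and~\ref{Cor_BinHarId2}, the proof is incomplete at exactly the step you yourself identify as carrying the bulk of the work.
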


\begin{theorem}\label{thm_4G2}
Let $r, d \in \mathbb{Z}$ such that $2 \leq r \leq d-2$ and $\gcd(r,d)=1$. Let $p$ be an odd prime such that $p\equiv \pm1 \imod d$ or $p\equiv \pm r \imod d$ with $r^2 \equiv \pm1 \imod d$. If $s(p) := \gfp{\tfrac{1}{d}} \gfp{\tfrac{r}{d}}\gfp{\tfrac{d-r}{d}}\gfp{\tfrac{d-1}{d}}$, then
\begin{align*}
{_{4}G} \Bigl(\tfrac{1}{d} , \tfrac{r}{d}, 1-\tfrac{r}{d} , 1-\tfrac{1}{d}\Bigr)_p
&\equiv
{_{4}F_3} \Biggl[ \begin{array}{cccc} \frac{1}{d}, & \frac{r}{d}, & 1-\frac{r}{d}, & 1-\frac{1}{d}\vspace{.05in}\\
\phantom{\frac{1}{d_1}} & 1, & 1, & 1 \end{array}
\bigg| \; 1 \Biggr]_{p-1}
+s(p)\hspace{1pt} p
\pmod {p^3}.
\end{align*}
\end{theorem}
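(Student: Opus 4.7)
The plan is to argue directly from Definition \ref{def_GFn}, mirroring the strategy used for Theorem \ref{thm_4G1}. Write $T_j$ for the $j$th summand of ${_4G}\bigl(\tfrac{1}{d},\tfrac{r}{d},1-\tfrac{r}{d},1-\tfrac{1}{d}\bigr)_p$ and partition the range $0\le j\le p-2$ according to the common values of the four floor functions $\lfloor \tfrac{m_i}{d_i}-\tfrac{j}{p-1}\rfloor$. Because the parameters interlace as $\tfrac{1}{d}<\tfrac{r}{d}<\tfrac{d-r}{d}<\tfrac{d-1}{d}$, the quantity $-\sum_i\lfloor\cdot\rfloor$ takes the values $0,1,2,3,4$ on five consecutive cells, and only the first three survive modulo $p^3$.

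In the leading cell (all floors zero, roughly $0\le j<\lfloor(p-1)/d\rfloor$), I apply Morita's relations $\Gamma_p(x+1)=-x\,\Gamma_p(x)$ for $x\in\mathbb{Z}_p^\times$ and the identity $\langle \tfrac{m_i}{d_i}-\tfrac{j}{p-1}\rangle=\tfrac{m_i}{d_i}-\tfrac{j}{p-1}$ to telescope each ratio $\Gamma_p(\langle \tfrac{m_i}{d_i}-\tfrac{j}{p-1}\rangle)/\Gamma_p(\tfrac{m_i}{d_i})$ into a product whose $p$-adic limit, as $1/(p-1)$ is replaced by $0$, is the Pochhammer symbol $\ph{\tfrac{m_i}{d_i}}{j}$, together with a $p$-adic Taylor correction of order at most $p^3$. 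Combined with the factor $((-1)^j\Gamma_p(\tfrac{j}{p-1}))^4$, which after expansion supplies the $1/(j!)^4$ in the truncated series, the leading cell reproduces the right-hand side $_4F_3$ modulo $p^3$.

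The second and third cells together produce the $s(p)\,p$ correction. In these cells one or two of the floors have dropped to $-1$, so the arguments $\langle \tfrac{m_i}{d_i}-\tfrac{j}{p-1}\rangle$ acquire an extra $+1$. I pair the resulting $\Gamma_p(1+\tfrac{m_i}{d_i}-\tfrac{j}{p-1})$ with $\Gamma_p(\tfrac{m_i}{d_i})$ via the $p$-adic reflection $\Gamma_p(x)\Gamma_p(1-x)=(-1)^{a_0(x)}$, and then re-index the sum over $j$. The congruence hypothesis on $p$ intervenes here: for $p\equiv\pm 1\pmod d$ the required re-indexing is the trivial substitution $j\mapsto p-1-j$, while for $p\equiv\pm r\pmod d$ the condition $r^2\equiv\pm 1\pmod d$ ensures that multiplication by $r$ permutes the parameter set $\{\tfrac{1}{d},\tfrac{r}{d},\tfrac{d-r}{d},\tfrac{d-1}{d}\}$ modulo $1$, so that the substitution $j\mapsto rj\bmod(p-1)$ realigns the summand into the same Pochhammer form up to a controllable discrepancy.

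The main obstacle will be this second case, $p\equiv\pm r\pmod d$ with $r^2\equiv\pm 1\pmod d$: characters of order $d$ on $\mathbb{F}_p^*$ do not exist, so Proposition \ref{prop_GtoGHS} is unavailable and the entire argument has to be conducted on the $p$-adic side. One must track the permutation $x\mapsto rx$ of the parameter set simultaneously through the Taylor expansions of $\Gamma_p$ and through the reflection signs, and verify that the accumulated sign equals $\gfp{\tfrac{1}{d}}\gfp{\tfrac{r}{d}}\gfp{\tfrac{d-r}{d}}\gfp{\tfrac{d-1}{d}}=s(p)$. This combinatorial bookkeeping, rather than any novel analytic input, is the delicate heart of the argument.
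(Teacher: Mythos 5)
Your opening skeleton --- expand Definition \ref{def_GFn}, split the sum over $j$ into cells according to the values of the floor functions, and keep only the cells where $-\sum_i\lfloor\cdot\rfloor\leq 2$ modulo $p^3$ --- does match the start of the paper's argument. But the central claim of your proposal, that the leading cell reproduces the truncated ${_4F_3}$ modulo $p^3$ while the later cells supply $s(p)\,p$, is not correct, and it conceals where essentially all of the work lies. Expanding $\gfp{m_k+jt}$ with $t=1+p+p^2$ via Proposition \ref{prop_pGammaCong}~(2) leaves order-$p$ and order-$p^2$ corrections involving the logarithmic derivatives $G_1,G_2$ (the quantities $A(j)$ and $B(j)$ in the paper); meanwhile the truncated ${_4F_3}$ itself, rewritten via Proposition \ref{prop_gammapmd}, has its own $p$- and $p^2$-divisible tails for $j\geq\rep(m_4)$ and $j\geq\rep(m_3)$ coming from the factors $m_1+p-\rep(m_1)=\tfrac{p}{d}$ and $m_2+p-\rep(m_2)=\tfrac{rp}{d}$ inside the Pochhammer symbols. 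The difference of the two sides is a nontrivial residual congruence modulo $p^2$ (the paper's (\ref{Resid1_4G2})), spread over all the cells, and your proposal contains no mechanism for evaluating it. The paper disposes of it using the combinatorial Lemmas \ref{lem_P} and \ref{lem_Q} (to show the piece $X(j)\equiv 0\pmod p$) and, decisively, the two external binomial coefficient--harmonic sum identities, Theorems \ref{Cor_BinHarId1} and \ref{Cor_BinHarId2} from \cite{DMC3}. In particular $s(p)$ is not produced by a reflection-plus-reindexing argument in the later cells: it is the value $(-1)^{m+n}=(-1)^{\rep(m_1)+\rep(m_2)}=s(p)$ that falls out of Theorem \ref{Cor_BinHarId1} applied with $m=\rep(m_1)-1$, $n=\rep(m_2)-1$, after the residual has been converted into the sum $\sum_{j}\Bin(j)\bigl[1+j\mathcal{H}(j)\bigr]$ via Lemmas \ref{lem_ProdGammap2} and \ref{lem_SumGammap2}. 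Without those identities the argument cannot close.

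Separately, the proposed substitutions $j\mapsto p-1-j$ and $j\mapsto rj\bmod(p-1)$ would not work as described. The summand of ${_4G}$ is a product of the fixed ratios $\gfp{\langle m_i-\tfrac{j}{p-1}\rangle}/\gfp{m_i}$ against $\gfp{\tfrac{j}{p-1}}^4$; the parameters $m_i$ do not move when $j$ is rescaled, so multiplication of $j$ by $r$ does not realign the summand into Pochhammer form. The hypothesis $r^2\equiv\pm 1\pmod d$ enters the paper's proof in a purely arithmetic way: writing $p\equiv a\pmod d$ and $s:=d\,\langle\tfrac{ar}{d}\rangle$, it forces the set identity $\{1,r,d-r,d-1\}=\{a,s,d-s,d-a\}$, which pins down the four representatives $\rep(m_k)$ relative to the summation limits $\lfloor\tfrac{r(p-1)}{d}\rfloor$ and $\lfloor(d-r)\tfrac{p-1}{d}\rfloor$ and makes the boundary sums between them either vacuous or divisible by a high enough power of $p$ (e.g.\ because $j+\tfrac{1}{d}$ or $j+\tfrac{r}{d}$ lands in $p\mathbb{Z}_p$ at the single surviving index). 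That case analysis, together with the identities of \cite{DMC3}, is the content you would still need to supply.
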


\noindent Theorems \ref{thm_2G} and \ref{thm_3G} extend Theorem \ref{thm_DMCMortTrunc}, when $n=1,2$, to primes in an additional congruence class. 
However, the price of this extension is a loss in some generality of the arguments of the series. 
In the case $n=3$, Theorems \ref{thm_4G1} and \ref{thm_4G2} not only extend Theorem \ref{thm_DMCMortTrunc} to primes in additional congruence classes but also to a modulo $p^3$ relation, which is a significant development, as we will see in the next paragraph. 
Again this extension is accompanied by a loss in generality of the arguments of the series. 
One of the consequences of the methods contained in this paper is that the sum of the arguments of $_{n+1}G$ in any extension will equal $\frac{n+1}{2}$. 
This means that the condition on $S$ in Theorem \ref{thm_DMCMortTrunc} will only be satisfied if $n \leq 3$. Therefore, extension of Theorem \ref{thm_DMCMortTrunc} to primes beyond $p \equiv 1 \pmod {d_i}$ when $n>3$ is not possible using current methods. 
Numerical testing would also suggest that such a general formula does not exist. 

The truncated generalized hypergeometric series appearing in Theorems \ref{thm_2G} to \ref{thm_4G2} include all 22 truncated hypergeometric series occurring in the supercongruence conjectures of Rodriguez-Villegas \cite{R}. 
Furthermore, in each of these cases, the congruences in Theorems \ref{thm_2G} to \ref{thm_4G2} hold for all primes required in the conjectures, due to the particular parameters involved. 
Thus, these congruences provide a framework for proving all 22 cases. In fact, in \cite{DMC2} we use Theorem \ref{thm_4G2} with $d=5$ to prove Conjecture \ref{thm_DMCMain}.

Using Proposition \ref{prop_GtoGHS}, it is easy to see the following corollaries of Theorems \ref{thm_2G} to \ref{thm_4G2}.
\begin{cor}\label{cor_2G}
Let $2 \leq d \in \mathbb{Z}$ and let $p$ be a prime such that $p\equiv 1 \pmod d$. If $\rho$ is the character of order $d$ of $\mathbb{F}_p^*$ given by $\bar{\omega}^{\frac{p-1}{d}}$, then
\begin{align*}
-p \:  {_{2}F_1}  {\Biggl( \begin{array}{cc} \rho, & \bar{\rho} \\
\phantom{\rho} & \varepsilon \end{array}
\bigg| \; 1 \Biggr)}_{p}
\equiv
{_{2}F_1} \Biggl[ \begin{array}{cc} \frac{1}{d}, & 1-\frac{1}{d}\vspace{.05in}\\
\phantom{\frac{1}{d}} & 1 \end{array}
\bigg| \; 1 \Biggr]_{p-1}
\pmod {p^2}.
\end{align*}
\end{cor}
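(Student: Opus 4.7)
The plan is to deduce this corollary directly by combining Theorem \ref{thm_2G} with the dictionary supplied by Proposition \ref{prop_GtoGHS}, exactly as the author indicates preceding the statement.

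First I would instantiate Proposition \ref{prop_GtoGHS} with $n = 1$, $\tfrac{m_1}{d_1} = \tfrac{1}{d}$, and $\tfrac{m_2}{d_2} = \tfrac{d-1}{d} = 1 - \tfrac{1}{d}$. For $d \geq 2$ both parameters lie in $(0,1)$ and satisfy the required ordering $\tfrac{m_1}{d_1} \leq \tfrac{m_2}{d_2}$, while the hypothesis $p \equiv 1 \pmod{d}$ secures $p \equiv 1 \pmod{d_i}$ for both $i$. Since $d_1 = d_2 = d$, the associated characters $\rho_1$ and $\rho_2$ each coincide with the single character $\rho = \bar{\omega}^{(p-1)/d}$ of order $d$, so the top-line parameters become $\rho_1^{m_1} = \rho$ and $\rho_2^{m_2} = \rho^{d-1} = \bar{\rho}$. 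With $(-1)^n p^n = -p$, the proposition yields
\[
{_{2}G}\bigl(\tfrac{1}{d}, 1 - \tfrac{1}{d}\bigr)_p \; = \; -p \; {_{2}F_1}{\Biggl( \begin{array}{cc} \rho, & \bar{\rho} \\ \phantom{\rho} & \varepsilon \end{array} \bigg| \; 1 \Biggr)}_p.
\]

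Next I would invoke Theorem \ref{thm_2G}. Its hypothesis $p \equiv \pm 1 \pmod{d}$ is weaker than the corollary's $p \equiv 1 \pmod{d}$, and hence satisfied; the theorem then supplies
\[
{_{2}G}\bigl(\tfrac{1}{d}, 1 - \tfrac{1}{d}\bigr)_p \; \equiv \; {_{2}F_1} \Biggl[ \begin{array}{cc} \frac{1}{d}, & 1 - \frac{1}{d} \\ \phantom{\frac{1}{d}} & 1 \end{array} \bigg| \; 1 \Biggr]_{p-1} \pmod{p^2}.
\]
Substituting the preceding identity for ${_{2}G}\bigl(\tfrac{1}{d}, 1 - \tfrac{1}{d}\bigr)_p$ into the left-hand side of this congruence produces precisely the statement of the corollary.

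No serious obstacle arises here: the argument is a mechanical consequence of the two previously established results, so the work reduces to bookkeeping. The only verification worth recording is that $\rho^{d-1} = \bar{\rho}$, which is immediate from $\rho^d = \varepsilon$, and that the orderings and domain restrictions of Proposition \ref{prop_GtoGHS} are fulfilled by the specific parameters chosen.
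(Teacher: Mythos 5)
Your proposal is correct and is exactly the argument the paper intends: the corollary is obtained by specializing Proposition \ref{prop_GtoGHS} to $n=1$ with parameters $\tfrac{1}{d}$ and $1-\tfrac{1}{d}$ (so the top-line characters are $\rho$ and $\rho^{d-1}=\bar{\rho}$ and the prefactor is $-p$) and then substituting into the congruence of Theorem \ref{thm_2G}. All hypotheses are checked correctly, so no further comment is needed.
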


\begin{cor}\label{cor_3G}
Let $2 \leq d \in \mathbb{Z}$ and let $p$ be a prime such that $p\equiv 1 \pmod d$. If $\psi$ is the character of order 2 and $\rho$ is the character of order $d$ of $\mathbb{F}_p^*$ given by $\bar{\omega}^{\frac{p-1}{d}}$, then
\begin{align*}
p^2 \:  {_{3}F_2}  {\Biggl( \begin{array}{ccc} \psi, & \rho, & \bar{\rho} \\
\phantom{\psi} & \varepsilon, & \varepsilon \end{array}
\bigg| \; 1 \Biggr)}_{p}
\equiv
{_{3}F_2} \Biggl[ \begin{array}{ccc} \frac{1}{2}, & \frac{1}{d}, & 1-\frac{1}{d}\vspace{.05in}\\
\phantom{\frac{1}{d}} & 1, &1 \end{array}
\bigg| \; 1 \Biggr]_{p-1}
\pmod {p^2}.
\end{align*}
\end{cor}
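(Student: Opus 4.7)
The plan is to deduce Corollary \ref{cor_3G} as a direct combination of Theorem \ref{thm_3G} with Proposition \ref{prop_GtoGHS}, so my main task is hypothesis-matching and character bookkeeping rather than any substantive new work. First I will invoke Proposition \ref{prop_GtoGHS} with $n=2$ and parameter triple $\bigl(\tfrac{m_1}{d_1}, \tfrac{m_2}{d_2}, \tfrac{m_3}{d_3}\bigr) = \bigl(\tfrac{1}{2}, \tfrac{1}{d}, \tfrac{d-1}{d}\bigr)$. The hypothesis $p \equiv 1 \pmod d$ of the corollary, together with the existence of the character $\psi$ of order $2$ (which forces $p$ odd), gives $p \equiv 1 \pmod{d_i}$ for every $i$, so the proposition applies.

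Next I will check that the characters $\rho_i^{m_i}$ appearing in Proposition \ref{prop_GtoGHS} coincide with the triple $(\psi, \rho, \bar{\rho})$ in the statement of the corollary. By definition $\rho_1 = \bar{\omega}^{(p-1)/2}$ is the unique character of order $2$ and so equals $\psi$; similarly $\rho_2^1 = \bar{\omega}^{(p-1)/d} = \rho$. For the last parameter,
\begin{equation*}
\rho_3^{d-1} = \bar{\omega}^{(d-1)(p-1)/d} = \bar{\omega}^{p-1}\,\omega^{(p-1)/d} = \omega^{(p-1)/d} = \bar{\rho},
\end{equation*}
using $\bar{\omega}^{p-1} = \varepsilon$. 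Substituting these into the right-hand side of Proposition \ref{prop_GtoGHS} gives
\begin{equation*}
{_{3}G}\!\left(\tfrac{1}{2}, \tfrac{1}{d}, 1 - \tfrac{1}{d}\right)_{\!p} = p^2 \cdot {_{3}F_2}\Biggl(\begin{array}{ccc} \psi, & \rho, & \bar{\rho} \\ \phantom{\psi} & \varepsilon, & \varepsilon \end{array} \bigg| \; 1 \Biggr)_{\!p}.
\end{equation*}

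Finally I will apply Theorem \ref{thm_3G}. Its hypothesis $p \equiv \pm 1 \pmod d$ is immediate from $p \equiv 1 \pmod d$, so Theorem \ref{thm_3G} yields
\begin{equation*}
{_{3}G}\!\left(\tfrac{1}{2}, \tfrac{1}{d}, 1 - \tfrac{1}{d}\right)_{\!p} \equiv {_{3}F_2}\Biggl[\begin{array}{ccc} \frac{1}{2}, & \frac{1}{d}, & 1-\frac{1}{d}\vspace{.05in}\\ \phantom{\frac{1}{d}} & 1, & 1 \end{array} \bigg| \; 1 \Biggr]_{p-1} \pmod{p^2},
\end{equation*}
and chaining this with the previous identity produces the claimed congruence. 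There is no genuine obstacle; the only point where one can slip is in the second step, where it is easy to mistakenly identify $\rho_3^{d-1}$ with $\rho^{d-1}$ rather than with $\bar{\rho}$ if one forgets that $m_3 = d-1$ rather than $1$.
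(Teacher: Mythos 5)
Your proposal is correct and follows exactly the route the paper intends: the paper derives Corollary \ref{cor_3G} by combining Proposition \ref{prop_GtoGHS} (with $n=2$ and parameters $\tfrac{1}{2},\tfrac{1}{d},\tfrac{d-1}{d}$, noting $(-1)^2p^2=p^2$ and $\rho^{d-1}=\bar{\rho}$) with Theorem \ref{thm_3G}. The character bookkeeping you carry out is accurate, so nothing further is needed.
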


\begin{cor}\label{cor_4G1}
Let $2 \leq d_1, d_2 \in \mathbb{Z}$ and let $p$ be a prime such that $p\equiv~1~{\pmod {d_1}}$ and $p\equiv 1 \pmod {d_2}$. If $s(p):=\biggfp{\frac{1}{d_1}}\biggfp{\frac{d_1-1}{d_1}}\biggfp{\frac{1}{d_2}}\biggfp{\frac{d_2-1}{d_2}}=(-1)^{\left\lfloor \frac{p-1}{d_1} \right\rfloor+\left\lfloor \frac{p-1}{d_2} \right\rfloor}$ and $\rho_i$ is the character of order $d_i$ of $\mathbb{F}_p^*$ given by $\bar{\omega}^{\frac{p-1}{d_i}}$, then
\begin{equation*}
-p^3 \:  {_{4}F_3}  {\Biggl( \begin{array}{cccc} \rho_1, & \bar{\rho_1}, & \rho_2, & \bar{\rho_2} \\
\phantom{\rho_1} & \varepsilon, & \varepsilon, & \varepsilon \end{array}
\bigg| \; 1 \Biggr)}_{p}
\equiv
{_{4}F_3} \Biggl[ \begin{array}{cccc} \frac{1}{d_1}, & 1-\frac{1}{d_1}, & \frac{1}{d_2}, & 1-\frac{1}{d_2}\vspace{.05in}\\
\phantom{\frac{1}{d_1}} & 1, & 1, & 1 \end{array}
\bigg| \; 1 \Biggr]_{p-1}
+s(p)\hspace{1pt}p
\pmod {p^3}.
\end{equation*}
\end{cor}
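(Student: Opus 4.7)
The plan is to derive Corollary \ref{cor_4G1} as a direct consequence of Theorem \ref{thm_4G1} combined with Proposition \ref{prop_GtoGHS}, essentially by translating the $_4G$ side of the theorem back into the language of Gaussian hypergeometric series.

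First I would verify that the hypotheses of Theorem \ref{thm_4G1} are met under the more restrictive hypotheses of the corollary. The assumption $p\equiv 1\pmod{d_1}$, $p\equiv 1\pmod{d_2}$ is precisely the $+1$ case of the $\pm 1\pmod{d_i}$ condition of the theorem. Since $d_1,d_2\geq 2$ and $p$ is a prime congruent to $1$ modulo each $d_i$, $p$ is automatically odd, so the oddness hypothesis of the theorem also holds. The condition imposed on $s(p)$ in the corollary is identical to that in the theorem.

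Next, I would apply Proposition \ref{prop_GtoGHS} with $n=3$ to the four rationals $\tfrac{1}{d_1},\,1-\tfrac{1}{d_1},\,\tfrac{1}{d_2},\,1-\tfrac{1}{d_2}$, all of which lie in $(0,1)$. Since $\rho_i$ has order $d_i$, we have $\rho_i^{d_i-1}=\bar{\rho_i}$, so the character line produced by the proposition for these parameters is exactly $\rho_1,\bar{\rho_1},\rho_2,\bar{\rho_2}$, matching the top-line parameters of the ${}_4F_3$ appearing in the corollary. The proposition therefore yields
\begin{equation*}
{_{4}G}\bigl(\tfrac{1}{d_1},\,1-\tfrac{1}{d_1},\,\tfrac{1}{d_2},\,1-\tfrac{1}{d_2}\bigr)_{p}
\;=\; -p^{3}\; {_{4}F_{3}}\Biggl(\begin{array}{cccc}\rho_1,&\bar{\rho_1},&\rho_2,&\bar{\rho_2}\\ \phantom{\rho_1}&\varepsilon,&\varepsilon,&\varepsilon\end{array}\bigg|\;1\Biggr)_{p}.
\end{equation*}

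Substituting this identity on the left hand side of the congruence in Theorem \ref{thm_4G1} produces exactly the statement of Corollary \ref{cor_4G1}. There is no substantive obstacle: all of the analytic content sits in Theorem \ref{thm_4G1} and Proposition \ref{prop_GtoGHS}, both of which are already available. The only item needing care is the identification of the character exponents with $\rho_i$ and $\bar{\rho_i}$, which is immediate from $\rho_i^{d_i}=\varepsilon$.
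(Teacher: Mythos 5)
Your proposal is correct and is exactly the paper's intended argument: the paper derives Corollary \ref{cor_4G1} by applying Proposition \ref{prop_GtoGHS} (with $n=3$ and the identification $\rho_i^{d_i-1}=\bar{\rho_i}$) to rewrite the left-hand side of Theorem \ref{thm_4G1} in the case $p\equiv 1\pmod{d_i}$. No gaps.
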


\begin{cor}\label{cor_4G2}
Let $r, d \in \mathbb{Z}$ such that $2 \leq r \leq d-2$ and $\gcd(r,d)=1$. Let $p$ be a prime such that $p\equiv 1 \pmod d$. If $s(p) := \gfp{\tfrac{1}{d}} \gfp{\tfrac{r}{d}}\gfp{\tfrac{d-r}{d}}\gfp{\tfrac{d-1}{d}}$ and $\rho$ is the character of order $d$ of $\mathbb{F}_p^*$ given by $\bar{\omega}^{\frac{p-1}{d_i}}$, then
\begin{equation*}
-p^3 \:  {_{4}F_3}  {\Biggl( \begin{array}{cccc} \rho, & \bar{\rho}, & \rho^r, & \bar{\rho}^r \\
\phantom{\rho_1} & \varepsilon, & \varepsilon, & \varepsilon \end{array}
\bigg| \; 1 \Biggr)}_{p}
\equiv
{_{4}F_3} \Biggl[ \begin{array}{cccc} \frac{1}{d}, & \frac{r}{d}, & 1-\frac{r}{d}, & 1-\frac{1}{d}\vspace{.05in}\\
\phantom{\frac{1}{d_1}} & 1, & 1, & 1 \end{array}
\bigg| \; 1 \Biggr]_{p-1}
+s(p)\hspace{1pt} p
\pmod {p^3}.
\end{equation*}
\end{cor}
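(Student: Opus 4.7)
The plan is straightforward: the corollary is the composition of Theorem \ref{thm_4G2} with Proposition \ref{prop_GtoGHS}, the latter converting the $_{n+1}G$ function into a Gaussian hypergeometric series whenever the relevant multiplicative characters exist. Since the hypothesis $p \equiv 1 \pmod d$ guarantees that the order-$d$ character $\rho = \bar{\omega}^{(p-1)/d}$ of $\mathbb{F}_p^*$ exists and, in particular, falls within the range of primes covered by Theorem \ref{thm_4G2}, both results apply to the prime $p$.

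I would apply Proposition \ref{prop_GtoGHS} with $n=3$ and parameters $(\tfrac{1}{d},\tfrac{r}{d},\tfrac{d-r}{d},\tfrac{d-1}{d})$; since $\gcd(r,d)=1$, each $d_i$ equals $d$ and each $\rho_i$ coincides with $\rho$. The proposition then rewrites
\begin{equation*}
{_{4}G}\Bigl(\tfrac{1}{d},\tfrac{r}{d},\tfrac{d-r}{d},\tfrac{d-1}{d}\Bigr)_p = -p^3 \cdot {_{4}F_3}\biggl(\begin{array}{cccc} \rho, & \rho^r, & \rho^{d-r}, & \rho^{d-1} \\ \phantom{\rho} & \varepsilon, & \varepsilon, & \varepsilon \end{array} \bigg| \; 1\biggr)_p.
\end{equation*}
Using $\rho^d = \varepsilon$, the identifications $\rho^{d-r} = \bar{\rho}^r$ and $\rho^{d-1} = \bar{\rho}$ are immediate. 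Because every bottom-line character in this $_{4}F_3$ is $\varepsilon$, each binomial factor $\binom{A_i\chi}{\varepsilon\chi}$ simplifies to $\binom{A_i\chi}{\chi}$, so the Gaussian hypergeometric series is symmetric in its top-line characters; this permits reordering them to $(\rho,\bar{\rho},\rho^r,\bar{\rho}^r)$ as displayed in the statement of the corollary.

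Substituting this expression for ${_{4}G}(\tfrac{1}{d},\tfrac{r}{d},\tfrac{d-r}{d},\tfrac{d-1}{d})_p$ into the modulo $p^3$ congruence supplied by Theorem \ref{thm_4G2} yields the corollary directly. There is no genuine obstacle here --- the only point requiring care is the top-parameter symmetry noted above, which is evident from the definition of $_{n+1}F_n$ whenever all bottom-line characters are trivial.
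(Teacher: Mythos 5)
Your proposal is correct and is exactly the paper's intended argument: the paper derives Corollary \ref{cor_4G2} by applying Proposition \ref{prop_GtoGHS} (with $n=3$, all $d_i=d$) to Theorem \ref{thm_4G2}, noting as you do that $p\equiv 1\pmod d$ satisfies the hypotheses of both results. Your added remarks on $\rho^{d-r}=\bar{\rho}^r$, $\rho^{d-1}=\bar{\rho}$ and the symmetry of the top-line characters when all bottom-line characters are trivial are the only details the paper leaves implicit, and you handle them correctly.
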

\noindent We note Corollaries \ref{cor_2G} and \ref{cor_3G} are special cases of Theorem 1 in \cite{M}. However Corollaries \ref{cor_4G1} and \ref{cor_4G2} are new and are the first general modulo $p^3$ results in this area.

The remainder of the paper is organized as follows. In Section 3 we recall some properties of the $p$-adic gamma function and its logarithmic derivatives and we also develop some preliminary results for later use. Section 4 deals with the proofs of Theorems \ref{thm_2G} to \ref{thm_4G2}.


\section{Preliminaries}
\subsection{$p$-adic preliminaries}
We first recall the $p$-adic gamma function. For further details, see \cite{Ko}. 
Let $p$ be an odd prime.  For $n \in \mathbb{Z}^{+}$ we define the $p$-adic gamma function as
\begin{align*}
\gfp{n} &:= {(-1)}^n \prod_{\substack{0<j<n\\p \nmid j}} j \\
\intertext{and extend to all $x \in\mathbb{Z}_p$ by setting $\gfp{0}:=1$ and} 
\gfp{x} &:= \lim_{n \rightarrow x} \gfp{n}
\end{align*}
for $x\neq 0$, where $n$ runs through any sequence of positive integers $p$-adically approaching $x$. 
This limit exists, is independent of how $n$ approaches $x$, and determines a continuous function
on $\mathbb{Z}_p$ with values in $\mathbb{Z}^{*}_p$.
We now state some basic properties of the $p$-adic gamma function.
\begin{prop}[\cite{Ko} Chapter II.6]\label{prop_pGamma}
Let $x, y \in \mathbb{Z}_{p}$ and $n \in \mathbb{Z}^{+}$. Then\\[6pt]
\textup{(1)}
$\gfp{x+1}=
\begin{cases}
-x \hspace{1pt} {\gfp{x}} & \quad \text{if } x \in \mathbb{Z}_p^* \, ,\\
- {\gfp{x}} & \quad \text{otherwise}.
\end{cases}$
\\[6pt]

\noindent \textup{(2)}
$\Gamma_p(x)\Gamma_p(1-x) = {(-1)}^{x_0}$,
where $x_0 \in \{1,2, \dotsc, {p}\}$ satisfies $x_0 \equiv x \pmod {p}$.
\\

\noindent \textup{(3)}
If $x \equiv y \pmod{p^n}$, then $\gfp{x} \equiv \gfp{y} \pmod{p^n}$.
\end{prop}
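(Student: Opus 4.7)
The plan is to establish these classical properties (as in \cite{Ko}, Chapter~II.6) by first verifying each identity for positive integer arguments, where $\Gamma_p$ is given by the explicit finite product of its definition, and then extending to $\mathbb{Z}_p$ by continuity. Since property (3) is itself the continuity statement that enables this extension, I would prove it first: for positive integers $n < m$ with $n \equiv m \pmod{p^k}$, the ratio $\Gamma_p(m)/\Gamma_p(n)$ equals $(-1)^{m-n}$ times the product of the integers in $[n, m-1]$ coprime to $p$. Grouping these factors by residue modulo appropriate powers of $p$ and invoking the $p$-adic analogue of Wilson's theorem shows the ratio is $1 + O(p^k)$, giving uniform continuity on the dense subset $\mathbb{Z}^+ \subset \mathbb{Z}_p$ and hence the unique continuous extension.

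For (1), I would directly compare $\Gamma_p(n+1) = (-1)^{n+1}\prod_{0<j\leq n,\, p\nmid j} j$ with $\Gamma_p(n) = (-1)^n\prod_{0<j<n,\, p\nmid j} j$; the quotient is $-n$ when $p\nmid n$ (the extra factor in the upper product) and $-1$ when $p\mid n$ (the products are otherwise identical). For general $x \in \mathbb{Z}_p$, I would pick positive integers $n_k \to x$, choosing $p \nmid n_k$ whenever $x \in \mathbb{Z}_p^*$, and pass to the limit using (3).

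For (2), by continuity it suffices to verify the identity for $x = x_0 \in \{1, \ldots, p\}$. Iterating (1) starting from $\Gamma_p(1) = -1$ produces a closed form for $\Gamma_p(x_0)$, and applying (1) in reverse produces a closed form for $\Gamma_p(1 - x_0)$. Multiplying the two expressions, the factors $j$ with $p \nmid j$ pair up and cancel, leaving $(-1)^{x_0}$ once signs are totalled. The boundary case $x_0 = p$ invokes the $-1$ branch of (1) where $p \mid j$ and needs a separate direct check.

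The main obstacle is the sign bookkeeping in (2), especially at the boundary $x_0 = p$ where the recurrence in (1) switches branches. Everything else reduces to direct manipulation of the defining finite product together with a standard continuity argument on the dense subset $\mathbb{Z}^+$ of $\mathbb{Z}_p$.
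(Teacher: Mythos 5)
The paper gives no proof of this proposition at all --- it is imported verbatim from Koblitz \cite{Ko}, Chapter II.6 --- so there is no internal argument to measure yours against; what follows judges your sketch on its own terms. Parts (1) and (3) are sound and are the standard arguments: for (1), the quotient of the defining products is $-n$ or $-1$ according as $p \nmid n$ or $p \mid n$, and any integer sequence tending to $x$ eventually lands in the correct branch automatically; for (3), writing $\Gamma_p(m)/\Gamma_p(n) = (-1)^{m-n}\prod_{n \le j < m,\, p\nmid j} j$ and splitting the interval into consecutive blocks of length $p^k$, each block contributes a full set of units modulo $p^k$ whose product is $\equiv -1 \pmod{p^k}$ by the generalized Wilson theorem (valid since $p$ is odd), and the signs cancel block by block to give a ratio $\equiv 1 \pmod{p^k}$; since $\Gamma_p(n) \in \mathbb{Z}_p^*$ this yields the congruence on $\mathbb{Z}^+$ and hence, by taking limits, on $\mathbb{Z}_p$.

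The one step that does not work as written is the opening reduction in (2): ``by continuity it suffices to verify the identity for $x = x_0 \in \{1,\dots,p\}$.'' The set $\{1,\dots,p\}$ is not dense in $\mathbb{Z}_p$, so continuity reduces nothing to it; density requires all of $\mathbb{Z}$ (or $\mathbb{Z}^+$). The repair is a piece of machinery you already have but stop short of invoking: part (1) shows that $f(x) := \Gamma_p(x)\Gamma_p(1-x)$ satisfies $f(x+1) = -f(x)$ for $x \in \mathbb{Z}_p^*$ and $f(x+1) = f(x)$ for $x \in p\,\mathbb{Z}_p$, which is exactly how $(-1)^{x_0}$ transforms under $x \mapsto x+1$ (when $x_0$ rolls over from $p$ to $1$ the value is unchanged because $(-1)^p = (-1)^1$ for odd $p$). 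Hence your verification at $x_0 \in \{1,\dots,p\}$ --- where the pairing $\Gamma_p(x_0) = (-1)^{x_0}(x_0-1)!$ against $\Gamma_p(1-x_0) = 1/(x_0-1)!$ is correct, all factors involved being prime to $p$ --- propagates to every integer, and only then does continuity together with the density of $\mathbb{Z}$ in $\mathbb{Z}_p$ finish the argument. With that one sentence supplied the proof is complete. (Incidentally, your worry about a branch switch at $x_0 = p$ is unfounded: the recursion only ever passes through the arguments $\pm 1, \dots, \pm(p-1)$, all units, so no separate check is needed there.)
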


\noindent
We also consider the logarithmic derivatives of $\Gamma_p$. For $x \in \mathbb{Z}_{p}$, define
$$G_1(x):= \dfrac{\Gamma_p^{\prime}(x)}{\gfp{x}} \qquad \textup{and} \qquad
 G_2(x):= \dfrac{\Gamma_p^{\prime \prime}(x)}{\gfp{x}}.$$ These also satisfy some basic properties  which we state below. Some of these results can be found in \cite{AO}, \cite{CDE} and \cite{K}. If not, we include a short proof.

\begin{prop}\label{prop_pGammaG}
Let $x \in \mathbb{Z}_{p}$. Then\\[6pt]
\textup{(1)}
$G_1(x+1) - G_1(x) =
\begin{cases}
 1/ x & \quad \text{if } x \in \mathbb{Z}_p^* \, ,\\
0 & \quad \text{otherwise}.
\end{cases}$
\\[0pt]

\noindent \textup{(2)}
$G_1(x+1)^2 - G_2(x+1) - G_1(x)^2 + G_2(x)=
\begin{cases}
 1/ x^2 & \quad \text{if } x \in \mathbb{Z}_p^*\, ,\\
0 & \quad \text{otherwise}.
\end{cases}$

\noindent \textup{(3)}
$G_1(x) = G_1(1-x)$.

\noindent \textup{(4)}
$G_1(x)^2 - G_2(x) = - G_1(1-x)^2 + G_2(1-x)$.\\
\end{prop}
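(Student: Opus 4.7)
The plan is to derive all four identities by applying standard calculus—differentiation and the quotient rule—to the functional equations for $\Gamma_p$ recorded in Proposition~\ref{prop_pGamma}. Parts (1) and (2) follow from the recurrence $\Gamma_p(x+1) = -x\,\Gamma_p(x)$ when $x \in \mathbb{Z}_p^*$, or $\Gamma_p(x+1) = -\Gamma_p(x)$ otherwise, while parts (3) and (4) follow from the reflection identity $\Gamma_p(x)\Gamma_p(1-x) = (-1)^{x_0}$.

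For (1), in the unit case I would differentiate the recurrence once to get $\Gamma_p'(x+1) = -\Gamma_p(x) - x\,\Gamma_p'(x)$ and divide by $\Gamma_p(x+1) = -x\,\Gamma_p(x)$, producing $G_1(x+1) = 1/x + G_1(x)$. In the non-unit case, the factor $-1$ cancels in numerator and denominator, giving $G_1(x+1) = G_1(x)$. For (2), a second differentiation in the unit case yields $\Gamma_p''(x+1) = -2\,\Gamma_p'(x) - x\,\Gamma_p''(x)$; dividing by $\Gamma_p(x+1)$ produces $G_2(x+1) = G_2(x) + 2G_1(x)/x$. Combining this with the expansion $G_1(x+1)^2 = (G_1(x) + 1/x)^2$ coming from (1) causes the cross terms $2G_1(x)/x$ to cancel, leaving the required $1/x^2$. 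The non-unit case is parallel, since all relevant quantities are unchanged under $x \mapsto x+1$.

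For (3), I would differentiate the reflection formula. Its right-hand side $(-1)^{x_0}$ is locally constant as a function of $x \in \mathbb{Z}_p$, since $x_0$ depends only on the residue class of $x$ modulo $p$, so its derivative is identically zero. Differentiating the left-hand side then gives $\Gamma_p'(x)\Gamma_p(1-x) - \Gamma_p(x)\Gamma_p'(1-x) = 0$, and dividing through by $\Gamma_p(x)\Gamma_p(1-x)$ yields $G_1(x) = G_1(1-x)$. For (4), the quotient rule applied to $G_1 = \Gamma_p'/\Gamma_p$ gives the universal identity $G_1'(x) = G_2(x) - G_1(x)^2$. Differentiating (3) produces $G_1'(x) = -G_1'(1-x)$, and substituting the universal identity on each side delivers exactly the identity in (4).

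None of the steps presents a serious obstacle; the only mildly delicate point is the observation that the right-hand side of the reflection formula is locally constant and hence has zero $p$-adic derivative, which is what allows (3) to go through cleanly and in turn makes (4) a one-line consequence.
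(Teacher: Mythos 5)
Your proposal is correct and follows exactly the route the paper takes: its proof of Proposition~\ref{prop_pGammaG} simply says that (1) and (3) come from differentiating Proposition~\ref{prop_pGamma} (1) and (2), and that (2) and (4) come from differentiating (1) and (3). You have merely written out the computations (including the useful observation that $(-1)^{x_0}$ is locally constant, hence has vanishing derivative), all of which check out.
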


\begin{proof}
(1) and (3) are obtained from differentiating the results in Proposition~\ref{prop_pGamma} (1) and (2) respectively, while (2) and (4) follow from differentiating (1) and (3).
\end{proof}

\noindent We also have some congruence properties of the $p$-adic gamma function and its logarithmic derivatives as follows.
\begin{prop}\label{prop_pGammaCong}
Let $p \geq 7$ be a prime, $x \in \mathbb{Z}_{p}$ and $z \in p  \hspace{1pt} \mathbb{Z}_{p}$. Then\\[6pt]
\textup{(1)}
$G_1(x)$, $G_2(x) \in \mathbb{Z}_p$.

\noindent \textup{(2)}
$\Gamma_p(x+z) \equiv \Gamma_p(x) \left(1+zG_1(x) +\frac{z^2}{2} G_2(x) \right) \pmod{p^3}$.

\noindent \textup{(3)}
$\Gamma_p^{\prime}(x+z) \equiv \Gamma_p^{\prime}(x) +z \Gamma_p^{\prime \prime} (x) \pmod{p^2}$.
\end{prop}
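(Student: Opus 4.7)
The plan is to derive all three statements from a single input: the local analyticity of $\Gamma_p$ on each residue disc in $\mathbb{Z}_p$, together with the $p$-integrality of the resulting Taylor coefficients. Concretely, I would use the fact that for every $x \in \mathbb{Z}_p$ the expansion
\begin{equation*}
\Gamma_p(x+z) \;=\; \sum_{k\geq 0} c_k(x)\, z^k, \qquad c_k(x) := \tfrac{1}{k!}\,\Gamma_p^{(k)}(x),
\end{equation*}
converges for every $z \in p\mathbb{Z}_p$, and each coefficient $c_k(x)$ lies in $\mathbb{Z}_p$.

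The first step is to establish that integrality. The cleanest route is to cite Koblitz's locally analytic presentation of $\Gamma_p$ (\cite{Ko}, Ch.~IV), from which the $p$-integrality of $c_k(x)$ follows once $p \geq 5$. An alternative, self-contained route is to fix a residue class $a \in \{0,1,\dots,p-1\}$, compute the Taylor expansion of $\Gamma_p$ about $a$ directly from the product defining $\Gamma_p(n)$ for $n \equiv a \pmod p$, and then propagate $p$-integrality from one residue disc to the next using the functional equation $\Gamma_p(x+1) = -x\,\Gamma_p(x)$ of Proposition~\ref{prop_pGamma}(1). The hypothesis $p \geq 7$ is much more than needed here: we only require $k! \in \mathbb{Z}_p^{\ast}$ for $k \leq 3$.

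Once $c_k(x) \in \mathbb{Z}_p$ is in hand, the three parts fall out quickly. Part (1) follows because $c_0(x) = \Gamma_p(x) \in \mathbb{Z}_p^\ast$ and $c_1(x), c_2(x) \in \mathbb{Z}_p$, so
\begin{equation*}
G_1(x) \;=\; \frac{c_1(x)}{c_0(x)} \;\in\; \mathbb{Z}_p, \qquad G_2(x) \;=\; \frac{2\,c_2(x)}{c_0(x)} \;\in\; \mathbb{Z}_p.
\end{equation*}
For (2), observe that for $z \in p\mathbb{Z}_p$ the tail $\sum_{k\geq 3} c_k(x)\,z^k$ lies in $p^3 \mathbb{Z}_p$ (each summand has valuation at least $k \geq 3$), so
\begin{equation*}
\Gamma_p(x+z) \;\equiv\; \Gamma_p(x) + z\,\Gamma_p'(x) + \tfrac{z^2}{2}\,\Gamma_p''(x) \pmod{p^3},
\end{equation*}
and dividing both sides by $\Gamma_p(x) \in \mathbb{Z}_p^\ast$ produces exactly the claimed expression in $G_1$ and $G_2$. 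For (3), differentiate the Taylor series termwise to get $\Gamma_p'(x+z) = \sum_{k \geq 0}(k+1)\,c_{k+1}(x)\,z^k$; the $k \geq 2$ tail now lies in $p^2\mathbb{Z}_p$, leaving $\Gamma_p'(x) + z\,\Gamma_p''(x)$ as the surviving part modulo $p^2$.

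The main obstacle is the first step: verifying the $p$-integrality of all Taylor coefficients $c_k(x)$, since this genuinely goes beyond the algebraic identities recorded in Propositions~\ref{prop_pGamma}--\ref{prop_pGammaG} and requires either a cited result or a careful local analysis of $\Gamma_p$. Everything after that reduces to tracking $p$-adic valuations of tails of convergent series.
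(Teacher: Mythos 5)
The paper does not prove this proposition itself but simply cites Kilbourn's Proposition~2.3, whose argument is precisely the one you sketch: $\Gamma_p$ is locally analytic on discs of radius $|p|$ with Taylor coefficients $c_k(x)=\Gamma_p^{(k)}(x)/k!$ lying in $\mathbb{Z}_p$ (for $p\geq 5$), and all three parts follow by truncating the resulting series. Your proposal is correct and takes essentially the same route; the one caveat is that the tail estimates in (2) and (3) require integrality of \emph{all} the $c_k(x)$, not just those with $k\leq 3$, so the hypothesis on $p$ cannot be dismissed quite as casually as your aside suggests.
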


\begin{proof}
See \cite{K} Proposition 2.3.
\end{proof}

\begin{cor}\label{cor_pGammaCong1}
Let $p \geq 7$ be a prime, $x \in \mathbb{Z}_{p}$ and $z \in p  \hspace{1pt} \mathbb{Z}_{p}$. Then\\[6pt]
\noindent \textup{(1)}
$\Gamma_p^{\prime}(x+z) \equiv \Gamma_p^{\prime}(x)  \pmod{p}$.

\noindent \textup{(2)}
$\Gamma_p^{\prime\prime}(x+z) \equiv \Gamma_p^{\prime\prime}(x)  \pmod{p}$.

\noindent \textup{(3)}
$G_1(x+z) \equiv G_1(x)  \pmod{p}$.

\noindent \textup{(4)}
$G_2(x+z) \equiv G_2(x)  \pmod{p}$.
\end{cor}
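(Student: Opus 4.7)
The plan is to derive all four congruences from Proposition~\ref{prop_pGammaCong}, using that $\Gamma_p$ takes values in $\mathbb{Z}_p^*$ (so division by $\Gamma_p$ is legal modulo any power of $p$) and that both $\Gamma_p^{\prime}(x) = \Gamma_p(x)G_1(x)$ and $\Gamma_p^{\prime\prime}(x) = \Gamma_p(x)G_2(x)$ lie in $\mathbb{Z}_p$ by Proposition~\ref{prop_pGammaCong}(1). Statement (1) is then immediate from Proposition~\ref{prop_pGammaCong}(3): the correction term $z\,\Gamma_p^{\prime\prime}(x)$ in the mod-$p^2$ expansion lies in $p\mathbb{Z}_p$.

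For (2), I would apply Proposition~\ref{prop_pGammaCong}(3) three times with carefully chosen increments, all of which lie in $p\mathbb{Z}_p$. At the base point $x$ with increment $z$ this gives
\[ \Gamma_p^{\prime}(x+z) \equiv \Gamma_p^{\prime}(x) + z\,\Gamma_p^{\prime\prime}(x) \pmod{p^2}; \]
at $x$ with increment $z+p$ it gives
\[ \Gamma_p^{\prime}(x+z+p) \equiv \Gamma_p^{\prime}(x) + (z+p)\,\Gamma_p^{\prime\prime}(x) \pmod{p^2}; \]
and at the base point $x+z$ with increment $p$ it gives
\[ \Gamma_p^{\prime}(x+z+p) \equiv \Gamma_p^{\prime}(x+z) + p\,\Gamma_p^{\prime\prime}(x+z) \pmod{p^2}. \]
Subtracting the first from the second and comparing with the third eliminates $\Gamma_p^{\prime}$ entirely and leaves $p\,\Gamma_p^{\prime\prime}(x+z) \equiv p\,\Gamma_p^{\prime\prime}(x) \pmod{p^2}$; dividing by $p$ yields (2). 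Statements (3) and (4) then follow by combining (1) and (2) respectively with the mod-$p$ consequence $\Gamma_p(x+z) \equiv \Gamma_p(x)$ of Proposition~\ref{prop_pGammaCong}(2) and dividing (legal since $\Gamma_p(x) \in \mathbb{Z}_p^*$).

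I expect the main technical hurdle to be (2). The naive attempt, comparing the mod-$p^2$ expansions at $x$ with increments $z$ and $2z$ against the expansion at $x+z$ with increment $z$, only yields $z[\Gamma_p^{\prime\prime}(x+z) - \Gamma_p^{\prime\prime}(x)] \equiv 0 \pmod{p^2}$, which produces the desired mod-$p$ statement only when $v_p(z) = 1$ and fails for $v_p(z) \geq 2$. Introducing the auxiliary shift $p$, whose $p$-adic valuation is exactly $1$ irrespective of $v_p(z)$, is what creates a factor of $p$ (rather than $z$) that can be cleanly canceled, giving a mod-$p$ conclusion uniform in $z \in p\mathbb{Z}_p$.
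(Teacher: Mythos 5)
Your proposal is correct, and for parts (1), (3) and (4) it coincides with the paper's argument: (1) is read off from Proposition~\ref{prop_pGammaCong}~(3) since $z\,\Gamma_p^{\prime\prime}(x) \in p\,\mathbb{Z}_p$, and (3), (4) follow from (1), (2), the integrality of $\Gamma_p^{\prime}$, $\Gamma_p^{\prime\prime}$, and the fact that $\Gamma_p$ takes unit values. Where you genuinely diverge is part (2). The paper does not derive it from the stated congruences at all; it simply says that one proves it ``using similar methods to Proposition~2.3 in \cite{K}'', i.e.\ by redoing Kilbourn's direct expansion argument at the level of $\Gamma_p^{\prime\prime}$. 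Your route instead extracts (2) formally from three instances of Proposition~\ref{prop_pGammaCong}~(3), using the auxiliary increment $p$ --- whose valuation is exactly $1$ regardless of $v_p(z)$ --- to produce the cancellable factor $p$ in $p\,\Gamma_p^{\prime\prime}(x+z) \equiv p\,\Gamma_p^{\prime\prime}(x) \pmod{p^2}$. This is a clean, self-contained deduction that avoids reopening the proof in \cite{K}, and your remark about why the naive comparison of increments $z$ and $2z$ fails for $v_p(z)\geq 2$ correctly identifies the obstacle your auxiliary shift is designed to circumvent. The only cost relative to the paper's approach is that yours is limited to a mod-$p$ conclusion for $\Gamma_p^{\prime\prime}$ (which is all the corollary asserts), whereas a direct Kilbourn-style expansion would in principle give sharper information; for the purposes of this corollary the two are equally adequate.
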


\begin{proof}
By definition, $\Gamma_p(x) \in \mathbb{Z}_p^{*}$. Thus, by Proposition \ref{prop_pGammaCong} (1) and the definitions of $G_1(x)$ and $G_2(x)$, we see that $\Gamma_p^{\prime}(x)$ and $\Gamma_p^{\prime\prime}(x) \in \mathbb{Z}_p$.
Observe that (1) then follows from Proposition~\ref{prop_pGammaCong} (3).
For (2), one uses similar methods to Proposition~2.3 in \cite{K}.
Finally, (3) and (4) follow from (1)  and (2) and the definitions of $G_1(x)$ and $G_2(x)$.
\end{proof}

\begin{cor}\label{cor_pGammaCong2}
Let $p \geq 7$ be a prime, $x \in \mathbb{Z}_{p}$ and $z \in p  \hspace{1pt} \mathbb{Z}_{p}$. Then
\begin{equation*}
G_1(x)\equiv G_1(x+z)+z\left(G_1(x+z)^2 - G_2(x+z)\right)
\pmod{p^2}.
\end{equation*}
\end{cor}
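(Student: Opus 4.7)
The plan is to derive this by expanding $\Gamma_p(x+z)$ and $\Gamma_p'(x+z)$ via the Taylor-type estimates in Proposition~\ref{prop_pGammaCong}, forming the quotient that defines $G_1(x+z)$, and finally swapping $G_1(x), G_2(x)$ for $G_1(x+z), G_2(x+z)$ in the first-order term using Corollary~\ref{cor_pGammaCong1}.

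More concretely, first I would apply Proposition~\ref{prop_pGammaCong}(2) and (3) to write
\begin{equation*}
\Gamma_p(x+z) \equiv \Gamma_p(x)\Bigl(1+zG_1(x)+\tfrac{z^2}{2}G_2(x)\Bigr) \pmod{p^3}, \qquad \Gamma_p'(x+z) \equiv \Gamma_p'(x)+z\Gamma_p''(x) \pmod{p^2}.
\end{equation*}
Since $\Gamma_p(x+z) \in \mathbb{Z}_p^{*}$, dividing these gives an expression for $G_1(x+z)$ modulo $p^2$. The key observation is that $z \in p\,\mathbb{Z}_p$ forces $z^2 \equiv 0 \pmod{p^2}$, so the geometric-series inversion of the denominator collapses to
\begin{equation*}
\bigl(1+zG_1(x)+\tfrac{z^2}{2}G_2(x)\bigr)^{-1} \equiv 1 - zG_1(x) \pmod{p^2}.
\end{equation*}

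Next I would multiply through, noting again that $z^2$ terms vanish modulo $p^2$, to obtain
\begin{equation*}
G_1(x+z) \equiv \bigl(G_1(x)+zG_2(x)\bigr)\bigl(1-zG_1(x)\bigr) \equiv G_1(x) - z\bigl(G_1(x)^2 - G_2(x)\bigr) \pmod{p^2}.
\end{equation*}
Rearranging gives $G_1(x) \equiv G_1(x+z)+z\bigl(G_1(x)^2 - G_2(x)\bigr) \pmod{p^2}$, which is almost the desired identity. To finish, I would invoke Corollary~\ref{cor_pGammaCong1}(3) and (4), which tell us that $G_1(x+z)\equiv G_1(x)$ and $G_2(x+z)\equiv G_2(x) \pmod{p}$. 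Since $z \in p\,\mathbb{Z}_p$, this means $z\bigl(G_1(x)^2-G_2(x)\bigr) \equiv z\bigl(G_1(x+z)^2-G_2(x+z)\bigr) \pmod{p^2}$, and substituting yields the claim.

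There is no real obstacle here beyond careful $p$-adic bookkeeping: the crucial point is that $z$ being a single factor of $p$ makes $z^2$ vanish modulo $p^2$, which kills the quadratic correction from Proposition~\ref{prop_pGammaCong}(2) and also cleans up the inversion of the denominator. The only subtle step is justifying that the replacements of $G_1(x), G_2(x)$ by their counterparts at $x+z$ incur no loss; this is precisely because the error $G_i(x) - G_i(x+z)$ is $O(p)$ and is multiplied by $z = O(p)$.
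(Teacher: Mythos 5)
Your proposal is correct and follows essentially the same route as the paper: both expand via Proposition~\ref{prop_pGammaCong}(2),(3), invert the denominator modulo $p^2$ (the paper multiplies by the conjugate where you use the geometric series, which is the same computation), and finish by swapping $G_1(x)^2-G_2(x)$ for $G_1(x+z)^2-G_2(x+z)$ using Corollary~\ref{cor_pGammaCong1} against the factor $z\in p\,\mathbb{Z}_p$. The only cosmetic difference is that you expand $G_1(x+z)$ in terms of data at $x$ and then rearrange, whereas the paper expresses $G_1(x)$ directly in terms of data at $x+z$.
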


\begin{proof}
By Proposition \ref{prop_pGammaCong}, we see that
\begin{align*}
G_1(x)= \frac{\Gamma_p^{\prime}(x)}{\gfp{x}}
\equiv \frac{\Gamma_p^{\prime}(x+z) - z \Gamma_p^{\prime \prime} (x)}{\Gamma_p(x+z) - z \Gamma_p^{\prime}(x)}
\pmod{p^2}.
\end{align*}
Multiplying the numerator and denominator by ${\Gamma_p(x+z) + z \Gamma_p^{\prime}(x)}$ we get that
\begin{align*}
G_1(x)
&\equiv \frac{
\Gamma_p^{\prime}(x+z) \Gamma_p(x+z) + z 
\left(\Gamma_p^{\prime}(x) \Gamma_p^{\prime}(x+z)-\Gamma_p(x+z) \Gamma_p^{\prime \prime} (x)\right)
}
{\Gamma_p(x+z)^2}\\
&\equiv \frac{
\Gamma_p^{\prime}(x+z) \Gamma_p(x+z) + z 
\left(\Gamma_p^{\prime}(x+z) \Gamma_p^{\prime}(x+z)-\Gamma_p(x+z) \Gamma_p^{\prime \prime} (x+z)\right)
}
{\Gamma_p(x+z)^2}\\
&\equiv G_1(x+z) +z \left(G_1(x+z)^2-G_2(x+z)\right)
\pmod{p^2}.
\end{align*}
\end{proof}

We now introduce some notation for a $p$-adic integer's basic representative modulo~$p$.
\begin{defi}
For a prime $p$ and $x \in \mathbb{Z}_p$ we define $\rep_p(x) \in \{1, 2, \cdots, p\}$ via the congruence
\begin{equation*}
\rep_p(x) \equiv x \pmod p.
\end{equation*}
\end{defi}
\noindent We will drop the subscript $p$ when it is clear from the context. We have the following basic properties of $\rep(\cdot)$.

\begin{prop}\label{prop_repOneminus}
Let $p$ be a prime and let $x \in \mathbb{Z}_p$. Then
\begin{equation*}
\rep(1-x)=p+1-\rep(x).
\end{equation*}
\end{prop}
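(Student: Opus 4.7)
The plan is to appeal directly to the defining property of $\rep$: namely, that $\rep(y)$ is the unique element of $\{1,2,\ldots,p\}$ congruent to $y$ modulo $p$. So it suffices to verify that the proposed value $p+1-\rep(x)$ lies in this range and is congruent to $1-x$ modulo $p$.

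For the range check, since $\rep(x)\in\{1,2,\ldots,p\}$ we have $1\leq \rep(x)\leq p$, and therefore $1\leq p+1-\rep(x)\leq p$. For the congruence check, by definition $\rep(x)\equiv x\pmod{p}$, so
\begin{equation*}
p+1-\rep(x)\equiv 1-\rep(x)\equiv 1-x\pmod{p}.
\end{equation*}

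By the uniqueness clause in the definition of $\rep$, these two facts together force $\rep(1-x)=p+1-\rep(x)$. There is no real obstacle here; the statement is essentially a restatement of the definition, and the only thing to be careful about is the convention that $\rep$ takes values in $\{1,\ldots,p\}$ rather than $\{0,\ldots,p-1\}$, which is precisely why the formula has the slightly unusual form $p+1-\rep(x)$ instead of $p-\rep(x)$.
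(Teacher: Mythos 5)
Your proof is correct and follows essentially the same route as the paper's: verify that $p+1-\rep(x)$ lies in $\{1,2,\ldots,p\}$, check the congruence $p+1-\rep(x)\equiv 1-x\pmod{p}$, and conclude by the uniqueness in the definition of $\rep$. No issues.
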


\begin{proof}
We first note that $1-x \in \mathbb{Z}_p$ and that $p+1-\rep(x) \in \{1, 2, \cdots, p\}$. We then see that
\begin{equation*}
p+1-\rep(x) \equiv 1-\rep(x) \equiv 1- x \pmod p.
\end{equation*}
\end{proof}

\begin{lemma}\label{cor_repGenformula}
Let $p$ be a prime and let $d \in \mathbb{Z}$ such that $p\equiv a \pmod d$ with ${0<a<d}$. Then
\begin{equation*}
\rep(\tfrac{a}{d})=p-\lfloor \tfrac{p-1}{d}\rfloor
\end{equation*}
and
\begin{equation*}
\rep(\tfrac{d-a}{d})=\lfloor \tfrac{p-1}{d} \rfloor +1 \;.
\end{equation*}
\end{lemma}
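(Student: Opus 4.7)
The plan is to establish the first identity directly and then obtain the second one by applying Proposition~\ref{prop_repOneminus} with $x = a/d$. Since the hypotheses require $p \equiv a \pmod d$ with $1 \leq a \leq d-1$, in particular $p \nmid d$, so $a/d \in \mathbb{Z}_p$ and the representative $\rep(a/d)$ is well-defined.

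For the first claim, I would begin by writing $p = dq + a$ with $q = (p-a)/d \in \mathbb{Z}_{\geq 0}$. Because $1 \leq a \leq d-1$, we have $p-1 = dq + (a-1)$ with $0 \leq a-1 \leq d-2$, so $q = \lfloor (p-1)/d \rfloor$. Thus it suffices to prove $\rep(a/d) = p - q$. Rewriting, $p - q = (pd - p + a)/d = (p(d-1) + a)/d$. Since $p(d-1) + a \equiv a(d-1) + a = ad \equiv 0 \pmod d$, this is an integer. Multiplying by $d$ gives $(p-q)\cdot d = p(d-1) + a \equiv a \pmod p$, so $p - q \equiv a/d \pmod p$.

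It remains to check $p - q \in \{1, 2, \ldots, p\}$. The inequality $p - q \leq p$ is immediate since $q \geq 0$. For $p - q \geq 1$, equivalently $p(d-1) + a \geq d$: if $p \geq d$ then $p(d-1) \geq d(d-1) \geq d$ (using $d \geq 2$, which holds since $a \geq 1$ forces $d \geq 2$), and if $p < d$ then $a = p$ so $(p(d-1)+a)/d = p \geq 1$. Hence $p - q$ lies in the required range and equals $\rep(a/d)$.

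For the second claim, note that $1 - a/d = (d-a)/d$, so by Proposition~\ref{prop_repOneminus},
\begin{equation*}
\rep\!\left(\tfrac{d-a}{d}\right) = \rep\!\left(1 - \tfrac{a}{d}\right) = p + 1 - \rep\!\left(\tfrac{a}{d}\right) = p + 1 - \bigl(p - \lfloor \tfrac{p-1}{d}\rfloor\bigr) = \lfloor \tfrac{p-1}{d} \rfloor + 1,
\end{equation*}
as desired. No real obstacle arises; the only mildly delicate point is the edge case $p < d$ (which forces $a = p$) when verifying the range, and this is handled above.
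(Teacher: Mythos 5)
Your proof is correct and takes essentially the same route as the paper's: both rest on the identity $\lfloor (p-1)/d\rfloor = (p-a)/d$, the verification that $p - (p-a)/d$ is congruent to $a/d$ modulo $p$ and lies in $\{1,\dots,p\}$, and an appeal to Proposition \ref{prop_repOneminus} for the second formula. The only slip is the parenthetical claim that $p \equiv a \pmod d$ with $0<a<d$ forces $p \nmid d$; it does not (take $p=3$, $d=6$, $a=3$, where the stated formulas in fact fail), so $p\nmid d$ --- which is what you actually need in order to divide by $d$ modulo $p$, and which holds in all of the paper's applications --- should be treated as an implicit hypothesis of the lemma, just as the paper does when it asserts $\tfrac{a}{d}\in\mathbb{Z}_p$ without comment.
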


\begin{proof}
We first note that $\tfrac{a}{d} \in \mathbb{Z}_p$. It easy to see that $2 \leq p-\lfloor \tfrac{p-1}{d} \rfloor \leq p$ and that
$$p-\lfloor \tfrac{p-1}{d} \rfloor= p - \tfrac{p-a}{d} \equiv \tfrac{a}{d} \pmod{p}.$$
Thus $\rep(\tfrac{a}{d})=p-\lfloor \tfrac{p-1}{d}\rfloor$. The second result follows from Proposition \ref{prop_repOneminus}.
\end{proof}

\noindent We now use these properties to develop further results concerning the $p$-adic gamma function. We recall the definition of the rising factorial $(a)_n$ in (\ref{RisFact}) and allow $a \in \mathbb{Z}_p$.
\begin{prop}\label{prop_gammapmd}
Let $p$ be an odd prime and let $x \in \mathbb{Z}_p$. If $0 \leq j \leq p\in \mathbb{Z}$, then
\begin{equation*}
\gfp{x+j}=
\begin{cases}
(-1)^j \; \gfp{x} \, \ph{x}{j} & \qquad \text{if } 0\leq j\leq p- \rep(x),\\[6pt]
(-1)^j \; \gfp{x} \, \ph{x}{j} \, {\left({x+p-\rep(x)}\right)}^{-1} & \qquad \text{if } p- \rep(x)+1\leq j\leq p.\\
\end{cases}
\end{equation*} 
\end{prop}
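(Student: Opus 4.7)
The plan is to prove the formula by induction on $j \in \{0, 1, \dots, p\}$, driven by the recurrence in Proposition~\ref{prop_pGamma}(1). The base case $j=0$ is immediate since $\ph{x}{0}=1$ and $0$ lies in the first sub-range $0 \leq j \leq p-\rep(x)$. The key observation for the inductive step is that $x+j \equiv \rep(x)+j \pmod{p}$, so $x+j$ fails to be a $p$-adic unit at exactly one value in our range, namely $j = p-\rep(x)$; this is precisely the index at which the formula's case distinction occurs.

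For $0 \leq j \leq p-\rep(x)-1$ and for $p-\rep(x)+1 \leq j \leq p-1$, the element $x+j$ is a unit (its residue class modulo $p$ lies in $\{\rep(x),\dots,p-1\}$ and $\{1,\dots,\rep(x)-1\}$ respectively), so Proposition~\ref{prop_pGamma}(1) gives $\gfp{x+j+1} = -(x+j)\gfp{x+j}$. Combined with the inductive hypothesis and the rising-factorial identity $\ph{x}{j+1} = (x+j)\ph{x}{j}$, this propagates the first-case formula through the first sub-range and the second-case formula through the second sub-range.

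The crucial step is the transition at $j = p-\rep(x)$, where $x+j \in p\mathbb{Z}_p$ and the non-unit branch gives $\gfp{x+p-\rep(x)+1} = -\gfp{x+p-\rep(x)}$. Substituting the first-case inductive hypothesis produces $(-1)^{p-\rep(x)+1}\gfp{x}\ph{x}{p-\rep(x)}$, which must match the second-case formula at $j+1 = p-\rep(x)+1$. This reduces to verifying the identity $\ph{x}{p-\rep(x)+1}\cdot(x+p-\rep(x))^{-1} = \ph{x}{p-\rep(x)}$ in $\mathbb{Q}_p$, valid because the $(p-\rep(x)+1)$-th factor of the rising factorial is exactly $x+p-\rep(x)$. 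The main (mild) obstacle is notational: $(x+p-\rep(x))^{-1}$ is not itself a $p$-adic integer, yet the product appearing in the second-case formula is, since the offending factor of $\ph{x}{j}$ cancels the inverse. The same cancellation handles the degenerate boundary $\rep(x)=p$ (i.e., $x \in p\mathbb{Z}_p$), where the first sub-range collapses to $\{0\}$ and $x+p-\rep(x)=x$ itself plays the role of the non-unit factor.
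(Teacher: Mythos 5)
Your proposal is correct and follows essentially the same route as the paper: the paper's proof likewise identifies $k=p-\rep(x)$ as the unique index in $\{0,\dots,j-1\}$ at which $x+k$ fails to be a unit and then iterates Proposition~\ref{prop_pGamma}~(1), which is exactly your induction written out. Your explicit treatment of the transition step and of the cancellation of the factor $x+p-\rep(x)$ is a slightly more careful rendering of what the paper leaves to ``the result follows.''
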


\begin{proof}
For $j=0$ the result is trivial. Assume $j>0$. For $0\leq k \leq j-1$,
\begin{equation}\label{for_pZp}
x +k \in p \hspace{1pt} \mathbb{Z}_p \Longleftrightarrow \rep(x) +k \in p \hspace{1pt} \mathbb{Z}_p \Longleftrightarrow \rep(x) +k = p \Longleftrightarrow k = p -\rep(x).
\end{equation}
Using Proposition \ref{prop_pGamma} (1) the result follows.
\end{proof}

\noindent For $i$, $n \in \mathbb{Z}^{+}$, we define the generalized harmonic sums, ${H}^{(i)}_{n}$, by
\begin{equation*}
{H}^{(i)}_{n}:= \sum^{n}_{j=1} \frac{1}{j^i}
\end{equation*}
and ${H}^{(i)}_{0}:=0$. We can now use the above to develop some congruences for use in Section 4.

\begin{lemma}\label{lem_ProdGammap}
Let $p$ be an odd prime and let $x \in \mathbb{Z}_p$. If $0 \leq j \leq p\in \mathbb{Z}$, then
\begin{equation*}
\gfp{x+j}\equiv \left(\rep(x)+j-1\right)!{(-1)}^{\rep(x)+j} \cdot \delta
\pmod{p} 
\end{equation*}
where
\begin{equation*}
\delta=
\begin{cases}
1 & \text{if } 0\leq j\leq p- \rep(x),\\[3pt]
\frac{1}{p} & \text{if } p- \rep(x)+1\leq j\leq p.\\[3pt]
\end{cases}
\end{equation*}
\end{lemma}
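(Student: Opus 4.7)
The plan is to apply Proposition~\ref{prop_gammapmd} directly and reduce each factor modulo $p$, treating the two cases separately. Set $r := \rep(x)$, so $1 \leq r \leq p$ and $x \equiv r \pmod p$. The key auxiliary congruence is $\gfp{x} \equiv (-1)^r (r-1)! \pmod p$, which follows from $\gfp{x} \equiv \gfp{r} \pmod p$ (Proposition~\ref{prop_pGamma}(3)) together with the defining product: for $1 \leq r \leq p-1$ no factor in that product is divisible by $p$ so the formula holds exactly, and for $r = p$ it is recovered modulo $p$ via Wilson's theorem.

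For the case $0 \leq j \leq p - r$, none of the factors $x+k$ with $0 \leq k \leq j-1$ are $\equiv 0 \pmod p$, so $\ph{x}{j}$ reduces cleanly to $\prod_{k=0}^{j-1}(r+k) = (r+j-1)!/(r-1)! \pmod p$. Multiplying by $(-1)^j \gfp{x}$ as in Proposition~\ref{prop_gammapmd} and substituting the auxiliary formula yields the claim with $\delta = 1$.

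For the case $p - r + 1 \leq j \leq p$, exactly one factor in $\ph{x}{j}$, namely $x + (p-r)$, lies in $p\mathbb{Z}_p$. The symbol $(x+p-r)^{-1}$ in Proposition~\ref{prop_gammapmd} must be read as cancelling this factor from the Pochhammer product, leaving a genuine product of units. Reducing modulo $p$ gives the values $r, r+1, \ldots, p-1, 1, 2, \ldots, r+j-1-p$, whose product simplifies via Wilson's theorem to $-(r+j-1-p)!/(r-1)!$. Combining with the auxiliary congruence produces $\gfp{x+j} \equiv -(-1)^{r+j}(r+j-1-p)! \pmod p$. The stated right-hand side must be interpreted with $\delta = 1/p$ standing for the integer $(r+j-1)!/p$, which exists because $p \leq r+j-1 < 2p$ forces $v_p((r+j-1)!) = 1$; a parallel Wilson computation yields $(r+j-1)!/p \equiv -(r+j-1-p)! \pmod p$, and the two sides agree.

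The only real subtlety, and the main obstacle, is giving a consistent integer-valued interpretation to the $1/p$ factor on both sides of the congruence (on the left it arises from the $(x+p-r)^{-1}$ in Proposition~\ref{prop_gammapmd}, on the right from dividing out the single power of $p$ in $(r+j-1)!$). Once this is clarified, the proof is a direct case-by-case unwinding of Proposition~\ref{prop_gammapmd} using Wilson's theorem.
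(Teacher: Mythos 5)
Your proof is correct, but it is organized differently from the paper's. The paper first replaces the argument by an integer, writing $\gfp{x+j}\equiv\gfp{\rep(x)+j}\pmod p$ via Proposition~\ref{prop_pGamma}(3), and then reads the claimed formula directly off the product formula $\gfp{n}=(-1)^n\prod_{0<k<n,\,p\nmid k}k$ at the positive integer $n=\rep(x)+j$ (equivalently, by iterating Proposition~\ref{prop_pGamma}(1) and using (\ref{for_pZp})): since $1\leq n\leq 2p$, that product is exactly $(n-1)!$ or $(n-1)!/p$ according as $n\leq p$ or $n>p$, and no further computation is needed. You instead apply the exact Pochhammer identity of Proposition~\ref{prop_gammapmd} first and only then reduce each piece modulo $p$; this forces you to invoke Wilson's theorem twice, once to evaluate the reduced Pochhammer product across the multiple of $p$ and once to identify $(\rep(x)+j-1)!/p$ with $-(\rep(x)+j-1-p)!$, and the two Wilson factors cancel. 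Both routes rest on the same functional equation; the paper's ordering is shorter because the integer-argument product formula already packages the factorial (and the division by $p$) with no wrap-around bookkeeping, while yours has the mild advantage of making completely explicit how the $1/p$ on the right-hand side is to be interpreted as the integer $(\rep(x)+j-1)!/p$ and how the $(x+p-\rep(x))^{-1}$ in Proposition~\ref{prop_gammapmd} should be read as cancellation of the unique non-unit factor. Your aside that the case $r=p$ of the auxiliary congruence needs Wilson's theorem is unnecessary (the product formula gives $\gfp{p}=(-1)^p(p-1)!$ exactly), but it is harmless.
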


\begin{proof}
By Proposition \ref{prop_pGamma} (3) we see that
\begin{equation*}
\gfp{x+j} \equiv \gfp{\rep(x)+j} \pmod{p}.
\end{equation*}
Combining Proposition \ref{prop_pGamma} (1) and (\ref{for_pZp}) yields the result.
\end{proof}

\begin{lemma}\label{lem_SumGammaG1}
Let $p\geq 7$ be a prime and let $x \in \mathbb{Z}_p$. If $0 \leq j \leq p-1\in \mathbb{Z}$, then
\begin{equation*}
G_1\left(x+j\right) -G_1(1+j)
\equiv
H_{\rep(x)-1+j}^{(1)}- H_{j}^{(1)}-\delta\\[3pt]
\pmod{p}
\end{equation*}
where 
\begin{equation*}
\delta=
\begin{cases}
0 & \text{if } 0\leq j\leq p- \rep(x),\\[3pt]
\frac{1}{p} & \text{if } p- \rep(x)+1\leq j\leq p-1.\\[3pt]
\end{cases}
\end{equation*}
\end{lemma}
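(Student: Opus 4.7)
The plan is to prove this by telescoping, using Proposition~\ref{prop_pGammaG}(1), which gives $G_1(y+1)-G_1(y) = 1/y$ when $y \in \mathbb{Z}_p^*$ and $0$ otherwise. Writing
\begin{equation*}
G_1(x+j) - G_1(1+j) = \bigl[G_1(x) - G_1(1)\bigr] + \bigl[G_1(x+j) - G_1(x)\bigr] - \bigl[G_1(1+j) - G_1(1)\bigr]
\end{equation*}
splits the task into three pieces that can be handled separately.

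For the first bracket, Corollary~\ref{cor_pGammaCong1}(3) gives $G_1(x) \equiv G_1(\rep(x)) \pmod p$, and then telescoping with integer steps $1, 2, \dotsc, \rep(x)$ --- all of which lie in $\mathbb{Z}_p^*$ since $\rep(x) \leq p$ --- yields $G_1(x) - G_1(1) \equiv H_{\rep(x)-1}^{(1)} \pmod p$. The third bracket is handled identically (and without any $p$-adic approximation): $G_1(1+j) - G_1(1) = H_j^{(1)}$ exactly, since $1, 2, \dotsc, j$ all lie in $\mathbb{Z}_p^*$ for $j \leq p-1$.

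The case split comes from the middle bracket. Telescoping gives
\begin{equation*}
G_1(x+j) - G_1(x) = \sum_{k=0}^{j-1} \bigl[G_1(x+k+1) - G_1(x+k)\bigr],
\end{equation*}
and by (\ref{for_pZp}) the $k$-th summand is $1/(x+k)$ whenever $k \neq p-\rep(x)$ and vanishes for $k = p-\rep(x)$. In Case 1 ($j \leq p-\rep(x)$) the exceptional index never occurs; reducing $x \equiv \rep(x) \pmod p$ and reassembling the three brackets produces $H_{\rep(x)-1+j}^{(1)} - H_j^{(1)}$, matching the claim with $\delta = 0$. In Case 2 ($j \geq p-\rep(x)+1$) the same reduction produces $\sum_{m=\rep(x),\,m \neq p}^{\rep(x)+j-1} 1/m$ modulo $p$, which as a rational equals $H_{\rep(x)+j-1}^{(1)} - H_{\rep(x)-1}^{(1)} - 1/p$; reassembling then yields $H_{\rep(x)-1+j}^{(1)} - H_j^{(1)} - 1/p$, giving $\delta = 1/p$.

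The main subtlety is bookkeeping in Case 2: both the LHS and the harmonic-sum part of the RHS live a priori in $\mathbb{Q}_p$ rather than $\mathbb{Z}_p$, and the role of $\delta = 1/p$ is precisely to absorb the single $1/p$ term that appears in $H_{\rep(x)-1+j}^{(1)} - H_j^{(1)}$ when $\rep(x)-1+j \geq p$, so that both sides differ by an element of $p\mathbb{Z}_p$. Identifying which harmonic sums contain a $1/p$ and confirming that these cancel exactly is the only delicate point; the actual $p$-adic input reduces to the single application of Corollary~\ref{cor_pGammaCong1}(3) used at the start.
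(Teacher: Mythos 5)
Your proof is correct and rests on exactly the same ingredients as the paper's: a single application of Corollary~\ref{cor_pGammaCong1}~(3) to pass from $x$ to $\rep(x)$, telescoping via Proposition~\ref{prop_pGammaG}~(1), and the identification of the one exceptional index $k=p-\rep(x)$ from (\ref{for_pZp}); the paper simply telescopes once, from $G_1(1+j)$ up to $G_1(\rep(x)+j)$, instead of using your three-bracket decomposition, and the bookkeeping is equivalent. The only blemish is the harmless off-by-one in ``integer steps $1,2,\dotsc,\rep(x)$'': the telescoping of $G_1(\rep(x))-G_1(1)$ only needs $1,\dotsc,\rep(x)-1$, which is what keeps the argument valid even when $\rep(x)=p$.
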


\begin{proof}
By Corollary \ref{cor_pGammaCong1} (3) we see that
\begin{equation*}
G_1\left(x+j\right) -G_1(1+j) \equiv G_1\left(\rep(x)+j\right) -G_1(1+j) \pmod{p}.
\end{equation*}
Combining Proposition \ref{prop_pGammaG} (1) and (\ref{for_pZp}) yields the result.
\end{proof}

\begin{lemma}\label{lem_SumGammaG2}
Let $p\geq 7$ be a prime and let $x \in \mathbb{Z}_p$. If $0 \leq j \leq p-1\in \mathbb{Z}$, then
\begin{equation*}
G_1\left(x+j\right)^2-G_2\left(x+j\right) -G_1(1+j)^2+G_2(1+j)
\equiv
H_{\rep(x)-1+j}^{(2)}- H_{j}^{(2)}-\delta\\[3pt]
\pmod{p}
\end{equation*}
where 
\begin{equation*}
\delta=
\begin{cases}
0 & \text{if } 0\leq j\leq p- \rep(x),\\[3pt]
\frac{1}{p^2} & \text{if } p- \rep(x)+1\leq j\leq p-1.\\[3pt]
\end{cases}
\end{equation*}
\end{lemma}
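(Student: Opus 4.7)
The plan is to mirror the proof of Lemma \ref{lem_SumGammaG1} almost verbatim, using the second-order logarithmic derivative identity of Proposition \ref{prop_pGammaG} (2) in place of the first-order identity. Since both $G_1$ and $G_2$ reduce well modulo $p$ (by parts (3) and (4) of Corollary \ref{cor_pGammaCong1}), I can first replace the $p$-adic argument $x$ with its reduction $\rep(x) \in \{1,2,\dots,p\}$ and work with integer arguments.

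More concretely, the first step is to observe that Corollary \ref{cor_pGammaCong1} (3) and (4) yield
\begin{equation*}
G_1(x+j)^2 - G_2(x+j) - G_1(1+j)^2 + G_2(1+j) \equiv G_1(\rep(x)+j)^2 - G_2(\rep(x)+j) - G_1(1+j)^2 + G_2(1+j) \pmod{p},
\end{equation*}
reducing the problem to the case of positive integer shifts. The second step is to telescope: writing the right-hand side as
\begin{equation*}
\sum_{k=1}^{\rep(x)-1} \Bigl[ G_1(k+j+1)^2 - G_2(k+j+1)^2 - G_1(k+j)^2 + G_2(k+j) \Bigr],
\end{equation*}
each summand is evaluated by Proposition \ref{prop_pGammaG} (2) to be either $1/(k+j)^2$ (if $k+j \in \mathbb{Z}_p^*$) or $0$ (otherwise).

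The final step is to match this sum against $H^{(2)}_{\rep(x)-1+j} - H^{(2)}_j = \sum_{y=j+1}^{\rep(x)-1+j} 1/y^2$. By (\ref{for_pZp}), the unique $y$ in the range $\{1+j,\dots,\rep(x)-1+j\}$ that fails to be a unit is $y = p$, which occurs exactly when $p-\rep(x)+1 \leq j \leq p-1$. In that case the telescoped sum is missing precisely the $1/p^2$ term present in $H^{(2)}_{\rep(x)-1+j} - H^{(2)}_j$, accounting for the correction $\delta = 1/p^2$; otherwise the two expressions agree exactly, giving $\delta = 0$.

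There is no real obstacle here — the lemma is a direct structural analogue of Lemma \ref{lem_SumGammaG1}, and the only point requiring a moment's care is the bookkeeping for when the forbidden value $y=p$ enters the telescoping range, which is already handled by the same case split as in the preceding lemma.
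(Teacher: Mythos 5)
Your proposal is correct and follows essentially the same route as the paper: reduce $x$ to $\rep(x)$ via Corollary \ref{cor_pGammaCong1} (3) and (4), then telescope Proposition \ref{prop_pGammaG} (2) and use (\ref{for_pZp}) to locate the single non-unit value $y=p$ that produces the $\delta$ correction. The only blemish is a typo in your telescoping display, where $G_2(k+j+1)^2$ should read $G_2(k+j+1)$.
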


\begin{proof}
By Corollary \ref{cor_pGammaCong1} (3) and (4) we see that
\begin{multline*}
G_1\left(x+j\right)^2-G_2\left(x+j\right) -G_1(1+j)^2+G_2(1+j)\\
\equiv
G_1\left(\rep(x)+j\right)^2-G_2\left(\rep(x)+j\right) -G_1(1+j)^2+G_2(1+j)
\pmod{p}.
\end{multline*}
Combining Proposition \ref{prop_pGammaG} (2) and (\ref{for_pZp}) yields the result.
\end{proof}

\begin{lemma}\label{lem_ProdGammap2}
Let $p\geq 7$ be a prime and let $x \in \mathbb{Z}_p$. Choose $m_1 \in \{x, 1-x\}$ such that $\rep(m_1) = \max\left(\rep(x) ,\rep(1-x)\right)$ and set $m_2=1-m_1$. Then for $0 \leq j<\rep(m_1) \in \mathbb{Z}$,
\begin{multline*}
\frac{\gfp{x+j}\gfp{1-x+j}}{\gfp{x}\gfp{1-x}{j!}^2} \equiv (-1)^{j} \bin{\rep(m_1)-1+j}{j} \bin{\rep(m_1)-1}{j}
\cdot \alpha\\
\cdot \left[1-\Bigl(\rep(m_1)-m_1\Bigr)\left(H_{\rep(m_1)-1+j}^{(1)}-H_{\rep(m_2)-1+j}^{(1)}-\beta \right)\right]
\pmod{p^2}
\end{multline*}
where 
\begin{align*}
\alpha=
\begin{cases}
1 & \text{if } 0\leq j \leq \rep(m_2)-1,\\
\frac{1}{p} & \text{if }\rep(m_2) \leq j < \rep(m_1),
\end{cases}
& \quad and \quad
\beta=
\begin{cases}
0 & \text{if } 0\leq j \leq \rep(m_2)-1,\\
\frac{1}{p} & \text{if }\rep(m_2) \leq j < \rep(m_1).
\end{cases}
\end{align*}
\end{lemma}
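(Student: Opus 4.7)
The plan is to reduce to the integer representatives $M := \rep(m_1)$ and $M' := \rep(m_2) = p + 1 - M$ (by Proposition~\ref{prop_repOneminus}) using the Taylor expansion of $\Gamma_p$. Setting $\mu := M - m_1 \in p\mathbb{Z}_p$ and $\nu := M' - m_2 = p - \mu \in p\mathbb{Z}_p$, Proposition~\ref{prop_pGammaCong}(2) expands each of the four $\Gamma_p$ factors on the LHS modulo $p^2$; taking the ratio (cross terms involving $\mu\nu$ vanish) yields
\begin{equation*}
\frac{\gfp{m_1+j}\gfp{m_2+j}}{\gfp{m_1}\gfp{m_2}(j!)^2} \equiv \frac{\gfp{M+j}\gfp{M'+j}}{\gfp{M}\gfp{M'}(j!)^2}\bigl[1 + \mu A + \nu B\bigr] \pmod{p^2},
\end{equation*}
where $A := G_1(M) - G_1(M+j)$ and $B := G_1(M') - G_1(M'+j)$.

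\textbf{Main calculation.} Since $j < M$ together with $M+M'=p+1$ forces $j \leq p-M'$, Proposition~\ref{prop_gammapmd} immediately gives $\gfp{M'+j}/\gfp{M'} = (-1)^j (M')_j$; the factor $\gfp{M+j}/\gfp{M}$ equals $(-1)^j(M)_j$ when $j \leq M'-1$ but $(-1)^j(M)_j/p$ when $j \geq M'$ (well-defined since $(M)_j$ then contains the zero factor $M+(p-M)=p$), producing the factor $\alpha$. I would then expand $(M')_j = \prod_{k=1}^{j}(p - (M-k))$ to first order in $p$, obtaining
\begin{equation*}
(M')_j \equiv (-1)^j j!\,\bin{M-1}{j}\bigl[1 - p\bigl(H_{M-1}^{(1)} - H_{M-j-1}^{(1)}\bigr)\bigr] \pmod{p^2},
\end{equation*}
while $(M)_j/j! = \bin{M+j-1}{j}$ exactly, giving the binomial prefactor. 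For the correction $\mu A + \nu B$, Proposition~\ref{prop_pGammaG}(1) telescopes $A$ and $B$ into
\begin{equation*}
A = H_{M-1}^{(1)} - H_{M+j-1}^{(1)} + \beta, \qquad B = H_{M'-1}^{(1)} - H_{M'+j-1}^{(1)},
\end{equation*}
the value $\beta = 1/p$ in Case~2 recording the skipped term $k = p-M$ at which $G_1$ jumps by $0$ rather than $1/(M+k)$. Substituting $\nu = p-\mu$ and using the reflection $H_{M-1}^{(1)} \equiv H_{M'-1}^{(1)} \pmod p$ (since $M'-1 = p-M$) to absorb $\mu(H_{M-1}^{(1)} - H_{M'-1}^{(1)})$ into the mod-$p^2$ error, one gets
\begin{equation*}
\mu A + \nu B \equiv -\mu\bigl(H_{M+j-1}^{(1)} - H_{M'+j-1}^{(1)} - \beta\bigr) + p\bigl(H_{M'-1}^{(1)} - H_{M'+j-1}^{(1)}\bigr) \pmod{p^2}.
\end{equation*}

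\textbf{Final cancellation.} The main obstacle is to verify that the leftover $p(H_{M'-1}^{(1)} - H_{M'+j-1}^{(1)})$ combines with the $-p(H_{M-1}^{(1)} - H_{M-j-1}^{(1)})$ already present in $(M')_j$ to vanish mod $p^2$; equivalently, one needs
\begin{equation*}
\bigl(H_{M'-1}^{(1)} - H_{M'+j-1}^{(1)}\bigr) - \bigl(H_{M-1}^{(1)} - H_{M-j-1}^{(1)}\bigr) \equiv 0 \pmod{p}.
\end{equation*}
This follows from two applications of the reflection $H_n^{(1)} \equiv H_{p-1-n}^{(1)} \pmod p$: directly $H_{M-1}^{(1)} \equiv H_{M'-1}^{(1)}$, and via $M'+j-1 = p-1-(M-j-1)$ also $H_{M-j-1}^{(1)} \equiv H_{M'+j-1}^{(1)}$. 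This cancellation is the precise reason the symmetry $m_1 + m_2 = 1$ (equivalently $M + M' = p+1$) is essential in the hypotheses; once it is verified, assembling all factors yields the stated congruence.
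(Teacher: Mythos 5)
Your proposal is correct, but it routes the two main computations differently from the paper, and the comparison is instructive. You Taylor\hyp{}expand both $\gfp{m_1+j}$ and $\gfp{m_2+j}$ about the standard representatives $M+j$ and $M'+j$ (where $M=\rep(m_1)$, $M'=\rep(m_2)$, $M+M'=p+1$), so the two perturbations $\mu$ and $\nu=p-\mu$ are not exact negatives; this leaves the stray first-order term $p\bigl(H_{M'-1}^{(1)}-H_{M'+j-1}^{(1)}\bigr)$, and you pick up a second stray term $-p\bigl(H_{M-1}^{(1)}-H_{M-j-1}^{(1)}\bigr)$ from expanding the rising factorial $\ph{M'}{j}=\prod_{k=1}^{j}\bigl(p-(M-k)\bigr)$ to first order in $p$. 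Cancelling these against each other requires the reflection $H_n^{(1)}\equiv H_{p-1-n}^{(1)}\pmod p$ (hence $H_{p-1}^{(1)}\equiv 0\pmod p$) applied twice --- a genuine extra ingredient, standard and correctly deployed. The paper's proof makes both stray terms disappear at the source: it expands $\gfp{m_2+j}$ about $\rep(m_2)-p+j=1-\rep(m_1)+j$ rather than $\rep(m_2)+j$, so that by (\ref{for_mRels}) the two perturbations are exactly $\pm\bigl(\rep(m_1)-m_1\bigr)$ and the first-order corrections merge into the single factor $1-\bigl(\rep(m_1)-m_1\bigr)\bigl(G_1(\rep(m_1)+j)-G_1(\rep(m_2)+j)\bigr)$ with no leftover multiple of $p$; and it evaluates the main term $\gfp{\rep(m_1)+j}\gfp{1-\rep(m_1)+j}$ \emph{exactly} via Proposition \ref{prop_pGamma} (2) as $(-1)^{\rep(m_1)-j}\gfp{\rep(m_1)+j}/\gfp{\rep(m_1)-j}$, a ratio of factorials, so no mod-$p^2$ expansion of $\ph{M'}{j}$ is needed. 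Your route is longer but sound; the one point you should make explicit is that in the range $\rep(m_2)\le j<\rep(m_1)$ the prefactor $\alpha\bin{M+j-1}{j}=\bin{M+j-1}{j}/p$ is still a $p$-adic integer (because $\ph{M}{j}$ contains exactly one factor divisible by $p$ and $j!$ is a unit), which is what permits multiplying your mod-$p^2$ congruence for the bracketed factors by it.
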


\begin{proof}
We first note that, by Proposition~\ref{prop_repOneminus},
\begin{equation}\label{for_mRels}
\rep(m_2)-p-m_2=-\left(\rep(m_1)-m_1\right) \quad \textup{and} \quad \rep(m_2)-p=1-\rep(m_1).
\end{equation}
Then by Proposition \ref{prop_pGammaCong} (2) we get that
\begin{align*}
&\gfp{x+j}\gfp{1-x+j}
=\gfp{m_1+j}\gfp{m_2+j}\\[6pt]
& \equiv
\left[\gfp{\rep(m_1)+j}-\left(\rep(m_1)-m_1\right)
\Gamma_p^{\prime}\left(\rep(m_1)+j\right) \right]\\[6pt]
& \quad \; \cdot
\left[\gfp{\rep(m_2)-p+j}-\left(\rep(m_2)-p-m_2\right)
\Gamma_p^{\prime}\left(\rep(m_2)+j\right) \right]\\[6pt]
& \equiv \gfp{\rep(m_1)+j} \gfp{1-\rep(m_1)+j} -
\left(\rep(m_1)-m_1\right)\\[6pt]
& \quad \; \cdot
\left[\Gamma_p^{\prime}\left(\rep(m_1)+j\right) \gfp{\rep(m_2)-p+j}
-\Gamma_p^{\prime}\left(\rep(m_2)+j\right) \gfp{\rep(m_1)+j} \right]
\pmod{p^2}.
\end{align*}
Using Proposition \ref{prop_pGamma} (3) we have
\begin{align*}
\Gamma_p^{\prime}&\left(\rep(m_1)+j\right) \gfp{\rep(m_2)-p+j}
-\Gamma_p^{\prime}\left(\rep(m_2)+j\right) \gfp{\rep(m_1)+j}\\[6pt]
&\equiv
\gfp{\rep(m_1)+j} \gfp{\rep(m_2)+j}
\left[G_1\left(\rep(m_1)+j\right)  -G_1\left(\rep(m_2)+j\right)\right]\\[6pt]
&\equiv
\gfp{\rep(m_1)+j} \gfp{1-\rep(m_1)+j}
\left[G_1\left(\rep(m_1)+j\right)  
-G_1\left(\rep(m_2)+j\right)\right]
\pmod{p}.
\end{align*}
So
\begin{multline*}
\gfp{x+j}\gfp{1-x+j}
\equiv
\gfp{\rep(m_1)+j} \gfp{1-\rep(m_1)+j} \\[6pt] \cdot
\left[1-\left(\rep(m_1)-m_1\right)
\left(G_1\left(\rep(m_1)+j\right)  -G_1\left(\rep(m_2)+j\right)\right)\right]
\pmod{p^2}.
\end{multline*}
By Proposition \ref{prop_pGammaG} (1),
\begin{equation*}
G_1\left(\rep(m_1)+j\right)  -G_1\left(\rep(m_2)+j\right)
= H_{\rep(m_1)-1+j}^{(1)}-H_{\rep(m_2)-1+j}^{(1)}-\beta.
\end{equation*}
For $j<\rep(m_1)$, Proposition \ref{prop_pGamma} gives us
\begin{align*}
\gfp{\rep(m_1)+j} \gfp{1-\rep(m_1)+j}
&= (-1)^{\rep(m_1)-j} \frac{\gfp{\rep(m_1)+j}}{\gfp{\rep(m_1)-j}}\\[3pt]
&=(-1)^{\rep(m_1)-j} \frac{\left(\rep(m_1)-1+j\right)! \, (\alpha)}{\left(\rep(m_1)-1-j\right)! } \ffs
\end{align*}
The result follows from noting that
\begin{equation*}
\frac{\left(\rep(m_1)-1+j\right)!}{\left(\rep(m_1)-1-j\right)! \hspace{2pt} {j!}^2}
= \bin{\rep(m_1)-1+j}{j} \bin{\rep(m_1)-1}{j} \;
\end{equation*}
and
\begin{equation*}
\gfp{x}\gfp{1-x}=(-1)^{\rep(m_1)}.
\end{equation*}
\end{proof}

\begin{lemma}\label{lem_SumGammap2}
Let $p\geq 7$ be a prime and let $x \in \mathbb{Z}_p$. Choose $m_1 \in \{x, 1-x\}$ such that $\rep(m_1) = \max\left(\rep(x) ,\rep(1-x)\right)$ and set $m_2=1-m_1$. Then for $0 \leq j<\rep(m_1) \in \mathbb{Z}$,\begin{multline*}
G_1\left(x+j\right)+G_1\left(1-x+j\right) -2G_1(1+j)
\equiv
H_{\rep(m_1)-1+j}^{(1)}+H_{\rep(m_1)-1-j}^{(1)}-2\hspace{1pt} H_{j}^{(1)}-\alpha\\[3pt]
+\left(\rep(m_1)-m_1\right) 
\left(H_{\rep(m_1)-1+j}^{(2)}-H_{\rep(m_2)-1+j}^{(2)}-\beta \right)
\pmod{p^2}
\end{multline*}
where 
\begin{align*}
\alpha=
\begin{cases}
0 & \text{if }0\leq j \leq \rep(m_2)-1,\\
\frac{1}{p} &\text{if } \rep(m_2) \leq j < \rep(m_1),
\end{cases}
& \qquad and \qquad
\beta=
\begin{cases}
0 &\text{if } 0\leq j \leq \rep(m_2)-1,\\
\frac{1}{p^2} &\text{if } \rep(m_2) \leq j < \rep(m_1).
\end{cases}
\end{align*}\\[-12pt]
\end{lemma}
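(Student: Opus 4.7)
The plan is to reduce $G_1(m_1+j) + G_1(m_2+j) - 2G_1(1+j)$ to an expression in harmonic sums by first shifting the arguments $m_1+j$ and $m_2+j$ to the nearby integer arguments $\rep(m_1)+j$ and $1-\rep(m_1)+j$ via Corollary \ref{cor_pGammaCong2}, then telescoping via Proposition \ref{prop_pGammaG}~(1) and~(2).

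Set $w := \rep(m_1) - m_1 \in p\,\mathbb{Z}_p$. By Proposition \ref{prop_repOneminus}, $m_2 + j - (1-\rep(m_1)+j) = -w$. Applying Corollary \ref{cor_pGammaCong2} with $z=w$ and $z=-w$ respectively yields, modulo $p^2$,
\begin{align*}
G_1(m_1+j) &\equiv G_1(\rep(m_1)+j) + w\bigl[G_1(\rep(m_1)+j)^2 - G_2(\rep(m_1)+j)\bigr],\\
G_1(m_2+j) &\equiv G_1(1-\rep(m_1)+j) - w\bigl[G_1(1-\rep(m_1)+j)^2 - G_2(1-\rep(m_1)+j)\bigr].
\end{align*}
Proposition \ref{prop_pGammaG}~(3) rewrites $G_1(1-\rep(m_1)+j) = G_1(\rep(m_1)-j)$, and~(4) replaces the second bracket by $-(G_1(\rep(m_1)-j)^2 - G_2(\rep(m_1)-j))$, so all arguments are now positive integers in $\{1,\ldots,\rep(m_1)\}$ or $\{\rep(m_1),\ldots,2\rep(m_1)-1\}$.

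I next add the two relations, subtract $2G_1(1+j)$, and telescope. For the integer part (no factor of $w$), Proposition \ref{prop_pGammaG}~(1) gives $G_1(n+1) - G_1(1) = H^{(1)}_n$ when $n \leq p-1$, with a $-1/p$ correction when $n \geq p$; this produces $H^{(1)}_{\rep(m_1)-1+j} + H^{(1)}_{\rep(m_1)-1-j} - 2H^{(1)}_j - \alpha$, where $\alpha$ captures the correction that arises only in the first sum, when $j \geq \rep(m_2)$ (so that $\rep(m_1)+j > p$). For the $w$-bracketed piece, since $w \in p\,\mathbb{Z}_p$ we only need the $G_1^2 - G_2$ combinations modulo $p$; telescoping via Proposition \ref{prop_pGammaG}~(2) gives each as $G_1(1)^2 - G_2(1)$ plus the appropriate $H^{(2)}$ sum, with a $-1/p^2$ correction in the first when $j \geq \rep(m_2)$. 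Then the symmetry $H^{(2)}_n + H^{(2)}_{p-1-n} \equiv 0 \pmod p$ (valid for $p \geq 5$, from $H^{(2)}_{p-1}\equiv 0 \pmod p$), applied with $n = \rep(m_1)-1-j$ so that $p-1-n = \rep(m_2)-1+j$, converts $H^{(2)}_{\rep(m_1)-1-j}$ into $-H^{(2)}_{\rep(m_2)-1+j}$ modulo $p$, yielding the bracket $H^{(2)}_{\rep(m_1)-1+j} - H^{(2)}_{\rep(m_2)-1+j} - \beta + 2(G_1(1)^2 - G_2(1))$ modulo $p$.

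The pivotal step, and the main obstacle, is to show that $G_1(1)^2 = G_2(1)$ exactly, so the stray constant $2(G_1(1)^2 - G_2(1))$ vanishes. This follows by combining Proposition \ref{prop_pGammaG}~(2) at $x=0$, which together with $G_1(0) = G_1(1)$ from~(3) yields $G_2(0) = G_2(1)$, with Proposition \ref{prop_pGammaG}~(4) at $x=1$, which gives $2G_1(1)^2 = G_2(0) + G_2(1) = 2G_2(1)$. Once this identity is in hand, multiplying the cleaned bracket by $w = \rep(m_1) - m_1$ yields the claimed second term modulo $p^2$, completing the plan.
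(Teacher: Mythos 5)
Your proof is correct, and its skeleton --- shifting $m_1+j$ and $m_2+j$ to integer arguments via Corollary~\ref{cor_pGammaCong2}, reflecting with Proposition~\ref{prop_pGammaG}~(3), and telescoping with Proposition~\ref{prop_pGammaG}~(1) and~(2) --- is the same as the paper's. The one genuine divergence is in the second-order bracket. The paper keeps that bracket as the \emph{difference} $G_1(\rep(m_1)+j)^2-G_2(\rep(m_1)+j)-G_1(\rep(m_2)+j)^2+G_2(\rep(m_2)+j)$ (replacing the argument $\rep(m_2)-p+j$ by $\rep(m_2)+j$ modulo $p$, which costs nothing since the whole bracket is multiplied by $\rep(m_1)-m_1\in p\,\mathbb{Z}_p$) and telescopes once between the two integer points, directly producing $H^{(2)}_{\rep(m_1)-1+j}-H^{(2)}_{\rep(m_2)-1+j}-\beta$ with no stray constant. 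You instead reflect to $\rep(m_1)-j$ via~(4), telescope each piece down to the base point $1$, and must then both dispose of the constant $2\bigl(G_1(1)^2-G_2(1)\bigr)$ and convert $H^{(2)}_{\rep(m_1)-1-j}$ into $-H^{(2)}_{\rep(m_2)-1+j}$ modulo $p$ via the symmetry $H^{(2)}_{n}+H^{(2)}_{p-1-n}\equiv 0 \pmod p$. Both extra ingredients are correctly established: your derivation of $G_1(1)^2=G_2(1)$ from Proposition~\ref{prop_pGammaG}~(2) at $x=0$ combined with~(3) and~(4) at $x=1$ is valid, and $H^{(2)}_{p-1}\equiv 0\pmod p$ holds for $p\geq 5$, so all quantities involved lie in $\mathbb{Z}_p$ where needed. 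The argument therefore goes through; it is just slightly longer than the paper's, which never needs either of these two facts.
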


\begin{proof}
Using Corollary \ref{cor_pGammaCong1} and Corollary \ref{cor_pGammaCong2} we see that
\begin{multline*}
G_1\left(m_1+j\right)\equiv G_1\left(\rep(m_1)+j\right)\\+\left(\rep(m_1)-m_1\right)
\left(G_1\bigl(\rep(m_1)+j\bigr)^2-G_2\left(\rep(m_1)+j\right)\right)
\pmod{p^2}
\end{multline*}
and
\begin{multline*}
G_1\left(m_2+j\right)
\equiv G_1\left(\rep(m_2)-p+j\right)\\
+\left(\rep(m_2)-p-m_2\right)
\left(G_1\bigl(\rep(m_2)+j\bigr)^2-G_2\left(\rep(m_2)+j\right)\right)
\pmod{p^2}.
\end{multline*}

\noindent Therefore, using Proposition \ref{prop_pGammaG} (3) and (\ref{for_mRels}),
\begin{align*}
G_1\left(x+j\right)&+G_1\left(1-x+j\right)
\\[4pt]&\;
=G_1\left(m_1+j\right)+G_1\left(m_2+j\right)\\[4pt]
& \; \equiv
G_1\left(\rep(m_1)+j\right)+G_1\left(\rep(m_1)-j\right)+
\left(\rep(m_1)-m_1\right) \left[G_1\bigl(\rep(m_1)+j\bigr)^2
\right. \\[4pt] & \left. \qquad
-G_2\left(\rep(m_1)+j\right)
-G_1\bigl(\rep(m_2)+j\bigr)^2+G_2\bigl(\rep(m_2)+j\bigr)\right]
\pmod{p^2}.
\end{align*}

\noindent By Proposition \ref{prop_pGammaG} (1), (2) we get that
\begin{align*}
G_1\left(\rep(m_1)+j\right)-G_1(1+j)
=
H_{\rep(m_1)-1+j}^{(1)}- H_{j}^{(1)}-
\alpha,
\end{align*}
\begin{multline*}
G_1\bigl(\rep(m_1)+j\bigr)^2-G_2\left(\rep(m_1)+j\right)
-G_1\bigl(\rep(m_2)+j\bigr)^2+G_2\left(\rep(m_2)+j\right)\\[3pt]
=
H_{\rep(m_1)-1+j}^{(2)}-H_{\rep(m_2)-1+j}^{(2)}-\beta
\end{multline*}
and, for $0\leq j < \rep(m_1)$,
\begin{align*}
G_1\left(\rep(m_1)-j\right)-G_1(1+j)
=
H_{\rep(m_1)-1-j}^{(1)}- H_{j}^{(1)}.
\end{align*}
The result follows.
\end{proof}

\subsection{A Combinatorial Technique}
In this section we generalize a combinatorial technique from \cite{M} to produce Lemmas \ref{lem_P} and \ref{lem_Q} below. We will use both of these lemmas in proving Theorems \ref{thm_2G} to \ref{thm_4G2} in Section 4.
\begin{lemma}\label{lem_P}
Let $p$ be an odd prime and let $a_1, a_2, \cdots, a_n \in \mathbb{Z}^{+}$ be such that ${T:=\sum_{i=1}^{n} a_i \leq 2(p-1)}$. Then  
\begin{equation*}
\sum_{j=0}^{p-1}\left[ \prod_{i=1}^{n} \ph{j+1}{a_i}\right] \left[1+j\hspace{1pt} \sum_{i=1}^{n} \left( H_{a_i+j}^{(1)} - \hspace{1pt} H_{j}^{(1)}\right)\right] \equiv
\begin{cases}
0 & \textup{if } T< 2(p-1),\\
1 & \textup{if } T=2(p-1),
\end{cases}
\pmod{p}.
\end{equation*}
\end{lemma}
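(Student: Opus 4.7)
The plan is to view the bracketed factor as the logarithmic derivative of a product and recognise the whole summand as an honest polynomial derivative. Set $f(j) := \prod_{i=1}^{n} (j+1)_{a_i}$, a monic polynomial in $j$ of degree $T$ with integer coefficients. Since $\frac{d}{dj} \log (j+1)_{a_i} = \sum_{k=1}^{a_i} \frac{1}{j+k}$, the product rule gives, as rational functions in $j$,
\[
f'(j) = f(j) \sum_{i=1}^{n} \bigl(H^{(1)}_{a_i+j} - H^{(1)}_{j}\bigr).
\]
Multiplying through and cancelling each pole $j=-k$ against the corresponding linear factor of $f$ promotes this to an identity of integer polynomials, so that
\[
f(j)\,\Bigl[\,1 + j \sum_{i=1}^{n} \bigl(H^{(1)}_{a_i+j} - H^{(1)}_{j}\bigr)\Bigr]
= f(j) + j\, f'(j) = \bigl(j f(j)\bigr)'.
\]
In particular the expression is well-defined modulo $p$ at every $j \in \{0,1,\dots,p-1\}$, even at those $j$ where individual harmonic denominators happen to be divisible by $p$.

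Setting $g(j) := j f(j)$, the sum in the lemma becomes $\sum_{j=0}^{p-1} g'(j) \pmod p$. Writing $f(j) = \sum_{k=0}^{T} f_k\, j^k$ (so $f_T = 1$), a direct expansion shows that the coefficient of $j^m$ in $g'(j)$ equals $(m+1)\, f_m$. I would then apply the classical identity
\[
\sum_{j=0}^{p-1} j^m \equiv
\begin{cases}
-1 \pmod p & \text{if } m \geq 1 \text{ and } (p-1) \mid m,\\
\phantom{-}0 \pmod p & \text{otherwise,}
\end{cases}
\]
which, since $\deg g' = T \leq 2(p-1)$, reduces the problem to reading off only the coefficients of $j^{p-1}$ and $j^{2(p-1)}$ in $g'(j)$.

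The coefficient of $j^{p-1}$ in $g'$ is $p\, f_{p-1} \equiv 0 \pmod p$, so this exponent contributes nothing irrespective of $T$. The exponent $m = 2(p-1)$ is in range only when $T = 2(p-1)$; in that case $f_{2(p-1)}$ is the leading coefficient of the monic polynomial $f$, hence equals $1$, and its contribution is $-(2p-1)\cdot 1 \equiv 1 \pmod p$. If instead $T < 2(p-1)$, neither exponent contributes and the sum is $\equiv 0 \pmod p$, giving the two cases claimed.

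The only substantive point is the first step: promoting the rational-function identity $f'/f = \sum_i (H^{(1)}_{a_i+j} - H^{(1)}_j)$ to an identity of integer polynomials, so that termwise evaluation modulo $p$ is legitimate even where individual harmonic summands have a $p$ in the denominator. Once that cancellation is made explicit, the remainder is routine bookkeeping of leading coefficients combined with the standard power-sum formula.
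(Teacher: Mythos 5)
Your proposal is correct and follows essentially the same route as the paper: both recognize the summand as $\frac{d}{dj}\bigl[j\prod_i \ph{j+1}{a_i}\bigr]$, apply the power-sum congruence $\sum_{j=1}^{p-1}j^k \equiv -1$ or $0 \pmod p$, and read off the coefficients of $j^{p-1}$ and $j^{2(p-1)}$ (namely $p f_{p-1}\equiv 0$ and $2p-1\equiv -1$) from the monic integer polynomial $\prod_i \ph{j+1}{a_i}$. Your explicit remark that the pole–zero cancellation makes the summand an honest integer polynomial evaluation is a small point of added care over the paper's presentation, not a different method.
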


\begin{proof}
Let
\begin{equation*}
P(j):= \frac{d}{dj} \left[ j \prod_{i=1}^{n} \ph{j+1}{a_i} \right] = \sum_{k=0}^{T} b_k j^k.
\end{equation*}
Differentiating we see that
\begin{align}\label{for_DiffP}
\frac{d}{dj} \left[ j \prod_{i=1}^{n} \ph{j+1}{a_i} \right] 
\notag&= \left[ \prod_{i=1}^{n} \ph{j+1}{a_i} \right] \left[ 1 + j \sum_{i=1}^{n} \left( H_{a_i+j}^{(1)} - H_{j}^{(1)} \right)\right]\ffs 
\end{align}
So it suffices to show
\begin{equation*}
 \sum_{j=0}^{p-1} P(j) \equiv 
\begin{cases}
0 & \textup{if } T< 2(p-1),\\
1 & \textup{if } T=2(p-1),
\end{cases}
 \pmod{p}.
\end{equation*}
For a positive integer $k$, we have
\begin{equation}\label{exp_sums}
\sum^{p-1}_{j=1} j^k\equiv
\begin{cases}
-1 \pmod {p}& \text{if $(p-1) \vert k$} \, ,\\
\phantom{-}0 \pmod {p}& \text{otherwise} \, .
\end{cases}
\end{equation}
Consequently,
\begin{equation*}
\sum_{j=0}^{p-1} P(j)
\equiv \sum_{k=1}^{T} b_k \sum_{j=1}^{p-1}  j^k
\equiv
\begin{cases}
0 & \textup{if } T< p-1,\\
-b_{p-1} & \textup{if } p-1 \leq T< 2(p-1),\\
-b_{p-1} -b_{2(p-1)} & \textup{if }  T=2(p-1),
\end{cases}
\pmod{p}.
\end{equation*}
By definition of $P(j)$ we see that
\begin{equation*}
\prod_{i=1}^{n} \ph{j+1}{a_i} = \sum_{k=0}^{T} \frac{b_k}{k+1} j^k \;,
\end{equation*}
which is monic with integer coefficients. Thus  $p\mid b_{p-1}$ for $T \geq p-1$ and $b_{2(p-1)}=2p-1$ if $T=2(p-1)$. Therefore $b_{p-1}\equiv 0 \pmod{p}$ and $b_{2(p-1)}\equiv -1\pmod{p}$ in these cases.
\end{proof}

\begin{lemma}\label{lem_Q}
Let $p$ be an odd prime and let $a_1, a_2, \cdots, a_n \in \mathbb{Z}^{+}$ be such that ${T:=\sum_{i=1}^{n} a_i \leq 2(p-1)}$. Then  
\begin{multline*}
\sum_{j=0}^{p-1} \Biggl[ \prod_{i=1}^{n} \ph{j+1}{a_i} \Biggr] \Biggl[j\hspace{1pt} \sum_{i=1}^{n} \left( H_{a_i+j}^{(1)} - \hspace{1pt} H_{j}^{(1)}\right) 
+\frac{j^2}{2} \Biggl\{ 
  \left(\sum_{i=1}^{n} \left( H_{a_i+j}^{(1)} - \hspace{1pt} H_{j}^{(1)}\right)\right)^2
  \\
- \sum_{i=1}^{n} \left( H_{a_i+j}^{(2)} - \hspace{1pt} H_{j}^{(2)}\right) \Biggr\}
 \Biggr]  \equiv 
\begin{cases}
0 & \textup{if } T< 2(p-1),\\
-1 & \textup{if } T=2(p-1),
\end{cases}
\pmod{p}.
\end{multline*}
\end{lemma}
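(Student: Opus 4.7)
\emph{Proof proposal.} The plan is to extend the derivative trick from the proof of Lemma~\ref{lem_P}. Set $f(j) := \prod_{i=1}^{n} \ph{j+1}{a_i}$, a polynomial in $j$ of degree~$T$, and write
\begin{equation*}
S_1(j) := \sum_{i=1}^{n} \bigl(H^{(1)}_{a_i+j} - H^{(1)}_j\bigr),
\qquad
S_2(j) := \sum_{i=1}^{n} \bigl(H^{(2)}_{a_i+j} - H^{(2)}_j\bigr).
\end{equation*}
At integer $j$, the identities $f' = S_1\, f$ and $S_1' = -S_2$ (equivalently $f'' = (S_1^2 - S_2)\,f$) follow from the same logarithmic-derivative computation already implicit in Lemma~\ref{lem_P}. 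Using $(jf)' = f + jf'$ and $(j^2 f)'' = 2f + 4jf' + j^2 f''$, I expect the summand to collapse into a combination of derivatives:
\begin{equation*}
\tfrac{1}{2}\bigl(j^2 f(j)\bigr)'' - \bigl(j\,f(j)\bigr)' = jf'(j) + \tfrac{j^2}{2}f''(j) = f(j)\left[j\,S_1(j) + \tfrac{j^2}{2}\bigl(S_1(j)^2 - S_2(j)\bigr)\right],
\end{equation*}
which is exactly the bracketed quantity in the statement.

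Granting this identity, the sum under consideration decomposes as $\tfrac{1}{2}\sum_{j=0}^{p-1}(j^2 f)''(j) - \sum_{j=0}^{p-1}(jf)'(j)$. The second piece is precisely $\sum_{j=0}^{p-1} P(j)$ from Lemma~\ref{lem_P}, and so contributes $0$ or $1 \pmod{p}$ according to whether $T < 2(p-1)$ or $T = 2(p-1)$. For the first piece, I would write $j^2 f(j) = \sum_{k=2}^{T+2} c_k\, j^k$, so that $(j^2 f)''(j) = \sum_{k=2}^{T+2} k(k-1)\, c_k\, j^{k-2}$. By (\ref{exp_sums}), the only surviving terms mod~$p$ are those with $k-2$ a positive multiple of $p-1$; under the constraint $k \leq T+2 \leq 2p$ this leaves only $k = p+1$ and $k = 2p$, and in each case the prefactor $k(k-1)$ is divisible by $p$. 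Hence $\sum_{j=0}^{p-1}(j^2 f)''(j) \equiv 0 \pmod{p}$, and the two pieces combine to give $0$ or $-1 \pmod{p}$, as claimed.

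The main obstacle is spotting the algebraic identity $Q(j) = \tfrac{1}{2}(j^2 f)'' - (jf)'$; once it is in place, the argument is an almost mechanical reuse of Lemma~\ref{lem_P} together with~(\ref{exp_sums}), with no additional $p$-adic machinery required. A useful sanity check is $n=1$, $a_1=1$: here $S_1^2 - S_2 = 0$, so $Q(j) = j$ and $\sum_{j=0}^{p-1} j \equiv 0 \pmod p$, matching the $T=1 < 2(p-1)$ case.
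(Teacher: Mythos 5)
Your proposal is correct and takes essentially the same route as the paper: the paper's proof recognizes the summand as $\tfrac{j}{2}\,\tfrac{d^2}{dj^2}\bigl[\,j\prod_{i=1}^{n}\ph{j+1}{a_i}\bigr]$, which is the same polynomial as your $\tfrac{1}{2}(j^2f)''-(jf)'$, and then applies (\ref{exp_sums}) exactly as in the proof of Lemma~\ref{lem_P}. Your decomposition just repackages the coefficient bookkeeping so that one piece is literally $\sum_j P(j)$; the underlying check on the $j^{p-1}$ and $j^{2(p-1)}$ coefficients is the same.
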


\begin{proof}
We first recognize that
\begin{multline*}
\Biggl[\prod_{i=1}^{n} \ph{j+1}{a_i} \Biggr] \Biggl[j\hspace{1pt} \sum_{i=1}^{n} \left( H_{a_i+j}^{(1)} - \hspace{1pt} H_{j}^{(1)}\right) 
+\frac{j^2}{2} \Biggl\{\left(\sum_{i=1}^{n} \left( H_{a_i+j}^{(1)} - \hspace{1pt} H_{j}^{(1)}\right)\right)^2
- \sum_{i=1}^{n} \left( H_{a_i+j}^{(2)} - \hspace{1pt} H_{j}^{(2)}\right) \Biggr\} \Biggr]
\\
=\frac{j}{2} \frac{d^2}{dj^2} \left[ j \prod_{i=1}^{n} \ph{j+1}{a_i} \right]
= \sum_{k=0}^{T} b_k j^k. 
\end{multline*}
Then applying (\ref{exp_sums}) in a similar fashion to that used in the proof of Lemma \ref{lem_P} yields the result.
\end{proof}

\subsection{Binomial Coefficient-Generalized Harmonic Sum Identities}
We now state two binomial coefficient--generalized harmonic sum identities from \cite{DMC3} which we will use in Section 4.
\begin{theorem}[\cite{DMC3} Thm. 2]\label{Cor_BinHarId1}
Let $m,n$ be positive integers with $m\geq n$. Then
\begin{multline*}
\sum_{k=0}^{n} \biggbin{m+k}{k} \biggbin{m}{k} \biggbin{n+k}{k} \biggbin{n}{k}
 \biggl[ 1+k \left(H_{m+k}^{(1)} +H_{m-k}^{(1)} + H_{n+k}^{(1)} 
 + H_{n-k}^{(1)} -4H_k^{(1)}\right) \biggr]\\[6pt]
\notag  +\sum_{k=n+1}^{m} (-1)^{k-n} \biggbin{m+k}{k} \biggbin{m}{k} \biggbin{n+k}{k} \Big/ \biggbin{k-1}{n}
=(-1)^{m+n}.
\end{multline*}
\end{theorem}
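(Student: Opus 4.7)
The natural first step is to unify the two sums through a single perturbation. Define, for each integer $k \geq 0$ and formal variable $x$,
$$F(k,x) := \binom{m+k+x}{k+x}\binom{m}{k+x}\binom{n+k+x}{k+x}\binom{n}{k+x},$$
with binomials interpreted via $\binom{a}{b} = \Gamma(a+1)/\bigl[\Gamma(b+1)\Gamma(a-b+1)\bigr]$. A direct computation of the logarithmic derivative, using $\psi(s+1) = -\gamma + H_s^{(1)}$, shows that for $0 \leq k \leq n$,
$$F'_x(k,0) = F(k,0)\,\Bigl(H_{m+k}^{(1)} + H_{m-k}^{(1)} + H_{n+k}^{(1)} + H_{n-k}^{(1)} - 4H_k^{(1)}\Bigr),$$
which is exactly the harmonic combination appearing in the first sum. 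For $n < k \le m$, the factor $\binom{n}{k+x}$ has a simple zero at $x = 0$ with leading coefficient $(-1)^{k-n}/\bigl[k\binom{k-1}{n}\bigr]$ (obtained by applying the reflection formula to $\Gamma(n-k-x+1)$), so that
$$k\,F'_x(k,0) = (-1)^{k-n}\binom{m+k}{k}\binom{m}{k}\binom{n+k}{k}\Big/\binom{k-1}{n}$$
matches the second sum exactly. For $k > m$, both $F(k,0)$ and $F'_x(k,0)$ vanish. Hence the identity is equivalent to the compact statement
$$\sum_{k \ge 0}\bigl[F(k,0) + k\,F'_x(k,0)\bigr] = (-1)^{m+n}. \quad (\ast)$$

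I would then attempt to prove $(\ast)$ by induction on $m$, with $n$ fixed and $m \ge n$. The base case $m = n$ reduces to the Apéry-type identity
$$\sum_{k=0}^{n}\binom{n+k}{k}^{2}\binom{n}{k}^{2}\Bigl[1 + 2k\bigl(H_{n+k}^{(1)} + H_{n-k}^{(1)} - 2H_{k}^{(1)}\bigr)\Bigr] = 1,$$
which can be obtained by differentiating the Apéry three-term recurrence for $A(n) = \sum_{k} \binom{n+k}{k}^{2}\binom{n}{k}^{2}$ with respect to a continuous parameter, or verified directly by a Wilf--Zeilberger certificate. For the inductive step, one must show $S(m+1,n) = -S(m,n)$, reflecting $(-1)^{(m+1)+n} = -(-1)^{m+n}$. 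This would be attacked by applying Pascal-type recurrences to each of the four binomial factors in $F(k,0)$, tracking how the induced transformations act on the harmonic-sum factors, and identifying a telescoping in $k$. The combinatorial infrastructure assembled in Lemmas~\ref{lem_P} and~\ref{lem_Q} is precisely of the kind needed for this bookkeeping.

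The main obstacle is the inductive step. Unlike the standard WZ paradigm for pure binomial sums, here the certificate itself must carry harmonic-number content, and one has to verify that (a) the transition between the two regimes $k \le n$ and $n < k \le m$ propagates correctly as $m$ advances, and (b) the new boundary term introduced at $k = m+1$ yields exactly the required sign reversal. An alternative route that might bypass the induction entirely is to evaluate the auxiliary sum $T(x) := \sum_{k \ge 0}(k+x)\,F(k,x)$ in closed form --- its summand is a balanced hypergeometric term of ${}_4F_3$ type, suggestive of a terminating Whipple or Karlsson--Minton identity --- and then extract the coefficient of $x$ at $x=0$ to obtain $(\ast)$.
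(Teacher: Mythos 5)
Your reduction of the identity to the single statement $\sum_{k\ge 0}\bigl[F(k,0)+k\,F'_x(k,0)\bigr]=(-1)^{m+n}$ is correct and is a genuinely clean way to unify the two regimes: the logarithmic-derivative computation for $0\le k\le n$, the extraction of the leading coefficient $(-1)^{k-n}/\bigl[k\bin{k-1}{n}\bigr]$ of the simple zero of $\bin{n}{k+x}$ for $n<k\le m$, and the vanishing for $k>m$ all check out. But after this reformulation you have not proved anything: the base case $m=n$ is only asserted (it is the known Ahlgren--Ono identity underlying the Ap\'ery supercongruence, but you neither prove it nor cite a proof in a way that could be checked), and the inductive step $S(m+1,n)=-S(m,n)$ --- which is the entire content of the theorem once $m>n$, since that is where the second sum and the alternating structure live --- is described only as something that ``would be attacked'' via Pascal recurrences and a hoped-for telescoping. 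You yourself name it ``the main obstacle,'' and no certificate, recurrence, or telescoping witness is exhibited. As it stands this is a correct reformulation plus a research plan, not a proof.

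For context: the paper does not prove this statement either; it is imported verbatim from \cite{DMC3}, where the proof proceeds by perturbing the parameters of a terminating hypergeometric evaluation and differentiating at the unperturbed point --- much closer in spirit to your ``alternative route'' via $T(x)=\sum_{k\ge 0}(k+x)F(k,x)$ than to the proposed induction on $m$. If you want to complete your argument along your own lines, the honest gap to close is either (a) an explicit WZ-style certificate for the $m$-recurrence that carries the harmonic-number weight, or (b) an actual closed-form evaluation of $T(x)$ from a balanced ${}_4F_3$ identity together with the coefficient extraction at $x=0$; option (b) is the one that is known to work.
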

\begin{theorem}[\cite{DMC3} Thm. 3]\label{Cor_BinHarId2}
Let $l,m, n$ be positive integers with $l > m\geq n\geq\frac{l}{2}$ and $c_1, c_2 \in \mathbb{Q}$ some constants. Then
\begin{multline*}
\sum_{k=0}^{n} \biggbin{m+k}{k} \biggbin{m}{k} \biggbin{n+k}{k} \biggbin{n}{k} 
 \Biggl\{ \biggl[1+k \Bigl(H_{m+k}^{(1)} +H_{m-k}^{(1)} + H_{n+k}^{(1)} + H_{n-k}^{(1)} 
-4H_k^{(1)} \Bigr)\biggr]
\\[5pt]
 \cdot \biggl[c_1\left(H_{k+n}^{(1)} - H_{k+l-n-1}^{(1)}\right)  + c_2 \Bigl(H_{k+m}^{(1)} - H_{k+l-m-1}^{(1)}\Bigr)\biggr] 
-k\biggl[c_1\left(H_{k+n}^{(2)} - H_{k+l-n-1}^{(2)}\right)\\[5pt] 
+ c_2 \left(H_{k+m}^{(2)} - H_{k+l-m-1}^{(2)}\right)\biggr] \Biggr\}
+ \sum_{k=n+1}^{m} (-1)^{k-n} \biggbin{m+k}{k} \biggbin{m}{k} \biggbin{n+k}{k} \Big/ \biggbin{k-1}{n} 
\\[5pt]
\cdot \biggl[c_1\left(H_{k+n}^{(1)} - H_{k+l-n-1}^{(1)}\right) + c_2 \left(H_{k+m}^{(1)} - H_{k+l-m-1}^{(1)}\right)\biggr] = 0.
\end{multline*}
\end{theorem}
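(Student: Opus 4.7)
My plan rests on recognising that Theorems~\ref{Cor_BinHarId1} and~\ref{Cor_BinHarId2} are instances of a single family of identities. Write $F(k):=\binom{m+k}{k}\binom{m}{k}\binom{n+k}{k}\binom{n}{k}$ and interpolate $k$ to a continuous variable through the Gamma function; then the logarithmic derivative $\tfrac{d}{dk}\log F(k)=H^{(1)}_{m+k}+H^{(1)}_{m-k}+H^{(1)}_{n+k}+H^{(1)}_{n-k}-4H^{(1)}_{k}=:h(k)$ reproduces exactly the bracketed harmonic factor of the theorems, so $\tfrac{d}{dk}[kF(k)]=F(k)[1+k\,h(k)]$ and the functional
\begin{equation*}
\mathcal{L}[R]\;:=\;\sum_{k=0}^{n}\tfrac{d}{dk}\bigl[kF(k)R(k)\bigr]\Big|_{k\in\mathbb{Z}}\;+\;\sum_{k=n+1}^{m}(-1)^{k-n}\tfrac{\binom{m+k}{k}\binom{m}{k}\binom{n+k}{k}}{\binom{k-1}{n}}\,R(k)
\end{equation*}
packages both statements uniformly: Theorem~\ref{Cor_BinHarId1} asserts $\mathcal{L}[1]=(-1)^{m+n}$, while the continuous-$k$ identity $\tfrac{d}{dk}A^{(n)}_{k}=-C^{(n)}_{k}$, where $A^{(n)}_{k}:=H^{(1)}_{k+n}-H^{(1)}_{k+l-n-1}$ and $C^{(n)}_{k}:=H^{(2)}_{k+n}-H^{(2)}_{k+l-n-1}$ (and analogously $A^{(m)}, C^{(m)}$), lets one verify via the product rule that Theorem~\ref{Cor_BinHarId2} is exactly the assertion $\mathcal{L}[c_{1}A^{(n)}+c_{2}A^{(m)}]=0$.

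Since $\mathcal{L}$ is linear in $R$ and $c_{1},c_{2}$ are free rational constants with vanishing right-hand side, the proof reduces to $\mathcal{L}[A^{(n)}]=\mathcal{L}[A^{(m)}]=0$. The hypothesis $n\geq l/2$ yields the finite partial-fraction decompositions $A^{(n)}_{k}=\sum_{j=l-n}^{n}\tfrac{1}{k+j}$ and $A^{(m)}_{k}=\sum_{j=l-m}^{m}\tfrac{1}{k+j}$, and by linearity once more it suffices to establish $\mathcal{L}\bigl[\tfrac{1}{k+j}\bigr]=0$ for every integer $j$ in $[l-m,m]$ (which contains both ranges, since $n\geq l/2$ forces $l-n\leq m$). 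Expanding the continuous-$k$ derivative through the product rule, this amounts to proving for each such $j$ that
\begin{equation*}
\sum_{k=0}^{n}F(k)\left\{\tfrac{1+k\,h(k)}{k+j}-\tfrac{k}{(k+j)^{2}}\right\}+\sum_{k=n+1}^{m}(-1)^{k-n}\tfrac{\binom{m+k}{k}\binom{m}{k}\binom{n+k}{k}}{(k+j)\,\binom{k-1}{n}}\;=\;0.
\end{equation*}

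Each reduced identity has the same architecture as Theorem~\ref{Cor_BinHarId1} but with the polynomial $kF(k)$ replaced by the strictly-lower-degree rational weight $kF(k)/(k+j)$. My approach is to adapt the generating-function/polynomial-expansion argument underlying Theorem~\ref{Cor_BinHarId1}: repackage the first sum via the product rule, observe that the removable pole at $k=-j$ lies strictly outside $[0,m]$ (as $j\geq l-m>0$), and extract the final value from a leading-coefficient analysis of the resulting polynomial. The key gain is that division by $(k+j)$ drops the effective degree by one, so the top coefficient that produces $(-1)^{m+n}$ in Theorem~\ref{Cor_BinHarId1} now contributes $0$, which is exactly what the reduced identity requires.

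The principal obstacle is making the bookkeeping match uniformly in $j\in[l-m,m]$: the tail summand $\binom{m+k}{k}\binom{m}{k}\binom{n+k}{k}/\binom{k-1}{n}$ already carries a pole structure coming from $\binom{k-1}{n}$, and layering $1/(k+j)$ on top of it requires a careful alignment with the first sum. I would attack this by a short descending induction on $j$: dispatch the boundary case $j=m$, where the weight $1/(k+m)$ can be absorbed into a shift of parameters reducing back to Theorem~\ref{Cor_BinHarId1}, and then propagate down to $j=l-m$ using the elementary decomposition $\tfrac{1}{k+j-1}-\tfrac{1}{k+j}=\tfrac{1}{(k+j-1)(k+j)}$ to relate successive cases. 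Should this induction prove unwieldy, a more conceptual alternative is to recognise $\mathcal{L}[\tfrac{1}{k+j}]=0$ as the first-order $t=0$ derivative of a finite Dougall-type well-poised ${}_{7}F_{6}$ evaluation in which the parameter $t$ enters a top-row entry without altering the closed-form value; the hypothesis $n\geq l/2$ would then correspond precisely to the termination condition for the series.
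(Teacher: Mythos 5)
First, a point of reference: the paper does not prove this statement at all --- it is imported verbatim from \cite{DMC3} (Theorem 3 there), where it is obtained by differentiating a classical terminating well-poised hypergeometric evaluation with respect to its parameters. So your proposal has to stand on its own, and its opening move is both correct and elegant: writing $F(k)$ for the product of the four binomial coefficients and interpolating $k$ through $\Gamma$-functions, the first summand really is $\tfrac{d}{dk}\bigl[kF(k)R(k)\bigr]$ with $R=c_1A^{(n)}+c_2A^{(m)}$, and since $n\geq l/2$ gives $A^{(n)}_k=\sum_{j=l-n}^{n}(k+j)^{-1}$ with every $j\geq l-m\geq 1$ (so no pole meets $0\leq k\leq m$), linearity reduces the theorem to the family $\mathcal{L}\bigl[(k+j)^{-1}\bigr]=0$ for $l-m\leq j\leq m$. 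Numerical checks in small cases confirm that this stronger family of identities does hold.

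The gap is that this reduced family is itself a nontrivial identity of essentially the same depth as the theorem, and none of the three routes you offer for it survives scrutiny. The ``leading-coefficient analysis'' cannot be run as described: unlike $\prod_i\ph{j+1}{a_i}$ in Lemma~\ref{lem_P}, the weight $F(k)$ is \emph{not} a polynomial in $k$ (the factors $\binom{m}{k}$ and $\binom{n}{k}$ are not polynomial in $k$), Lemma~\ref{lem_P} is a congruence extracted from power sums over a full residue system mod $p$ rather than an exact identity over $\mathbb{Q}$, and Theorem~\ref{Cor_BinHarId1} is likewise only cited in this paper, so there is no ``underlying polynomial-expansion argument'' here to adapt. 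The descending induction does not close: $\tfrac{1}{k+j-1}-\tfrac{1}{k+j}=\tfrac{1}{(k+j-1)(k+j)}$ merely rewrites the difference of two unknown values of $\mathcal{L}$ as a third unknown value, and the claimed absorption of the weight $(k+m)^{-1}$ into a parameter shift at the base case $j=m$ is asserted without justification. Your final suggestion --- realizing the identity as a parameter-derivative of a terminating Dougall/Whipple-type ${}_7F_6$ evaluation --- is in fact the mechanism behind \cite{DMC3}, and is the route that would work; but you only name it, and carrying it out (choosing the specialization, tracking how the simple zeros of $\binom{x}{k}$ at $x=m$ for $k>n$ generate the second sum, and matching the resulting harmonic-sum derivatives to $A$ and $C$) is the entire content of the proof and is absent from the proposal.
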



\section{Proofs} 
The proofs of Theorems \ref{thm_2G} to \ref{thm_4G2} proceed broadly along similar lines with the overall idea being to expand the relevant $_{n+1}G$ function using properties of the $p$-adic gamma function and its logarithmic derivatives,
 identify the contribution of the truncated generalized hypergeometric series, and then use the results of Section 3.2 and 3.3 to simplify the remaining terms. 
In doing this we extend significantly the methods in \cite{AO}, \cite{K} and \cite{M}. For brevity we include only the proof of Theorem \ref{thm_4G2} which is the most complex and uses all the additional methods developed for these results. Full details of all proofs can be found in \cite{DMC}.

\begin{proof}[Proof of Theorem \ref{thm_4G2}] 
Assume without loss of generality that $r<d/2$. We easily check the result for primes $p<7$. 
Let $p\geq 7$ be a prime which satisfies the conditions of the theorem.
Let $p\equiv a \pmod{d}$ with $0<a<d$. Then $a \in \{1,r,d-r,d-1\}$.
We define $s:=ar-d \left\lfloor \frac{ar}{d} \right\rfloor = d \, \langle \frac{ar}{d} \rangle$. 
It is easy to check that $\{1, r, d-r, d-1\} = \{a, s, d-s, d-a\}$ for all possible $a$.

From Lemma \ref{cor_repGenformula} we know that $\rep(\frac{a}{d}) = p- \lfloor \frac{p-1}{d} \rfloor$ and $\rep(\frac{d-a}{d}) = \lfloor \frac{p-1}{d}\rfloor+1$.
It is easy to check that $\rep(\frac{s}{d})=p-r \lfloor \tfrac{p-1}{d} \rfloor - \left\lfloor \tfrac{ar}{d} \right\rfloor$. Then, by Proposition \ref{prop_repOneminus}, $\rep(\frac{d-s}{d})= r \lfloor \tfrac{p-1}{d} \rfloor +1 +\left\lfloor \tfrac{ar}{d} \right\rfloor$.
Therefore $\Bigl\{\rep\left(\tfrac{1}{d}\right),\rep\left(\tfrac{r}{d}\right),\rep\left(\tfrac{d-r}{d}\right),\rep\left(\tfrac{d-1}{d}\right)\Bigr\}=\Bigl\{\lfloor \tfrac{p-1}{d} \rfloor +1,r \lfloor \tfrac{p-1}{d}\rfloor +1 +\lfloor \tfrac{ar}{d} \rfloor, p-r \lfloor \tfrac{p-1}{d} \rfloor - \lfloor \tfrac{ar}{d} \rfloor ,p-\lfloor \tfrac{p-1}{d} \rfloor\Bigr\}$, where the exact correspondence between the elements of each set depends on the choice of $p$. If we let $m_1:=\frac{a}{d}$, $m_2:=\frac{s}{d}$, $m_3:=\frac{d-s}{d}$ and $m_4:=\frac{d-a}{d}$ then $\rep\bigl(m_4\bigr)< \rep\bigl(m_3\bigr) \leq \rep\bigl(m_2\bigr) < \rep\bigl(m_1\bigr)$.

We reduce Definition \ref{def_GFn} modulo $p^3$ while using Proposition \ref{prop_pGamma} (2), (3) to expand the terms involved, noting that $t:=1+p+p^2 \equiv \frac{1}{1-p} \pmod{p^3}$, to get
\begin{align*}
&{_{4}G}  \left(\tfrac{1}{d} , \tfrac{r}{d}, 1-\tfrac{r}{d} , 1-\tfrac{1}{d}\right)_p
\equiv
\sum_{j=0}^{\left\lfloor \frac{p-1}{d} \right\rfloor} 
\frac{\biggfp{\frac{1}{d}+jt}\biggfp{\frac{r}{d}+jt}
\biggfp{\frac{d-r}{d}+jt}\biggfp{\frac{d-1}{d}+jt}}
{\biggfp{\frac{1}{d}}\biggfp{\frac{r}{d}}\biggfp{\frac{d-r}{d}}\biggfp{\frac{d-1}{d}}
{\biggfp{1+jt}}^{4}} 
\\ & 
+p \left\{
\sum_{j=0}^{\left\lfloor \frac{p-1}{d} \right\rfloor} 
\frac{\biggfp{\frac{1}{d}+j+jp}\biggfp{\frac{r}{d}+j+jp}
\biggfp{\frac{d-r}{d}+j+jp}\biggfp{\frac{d-1}{d}+j+jp}}
{\biggfp{\frac{1}{d}}\biggfp{\frac{r}{d}}\biggfp{\frac{d-r}{d}}\biggfp{\frac{d-1}{d}}
{\biggfp{1+j+jp}}^{4}} 
\right. \\ &\left. \qquad
- \sum_{j=\left\lfloor \frac{p-1}{d} \right\rfloor+1}^{\left\lfloor \frac{r(p-1)}{d} \right\rfloor}
\frac{\biggfp{\frac{d+1}{d}+j+jp}\biggfp{\frac{r}{d}+j+jp}
\biggfp{\frac{d-r}{d}+j+jp}\biggfp{\frac{d-1}{d}+j+jp}}
{\biggfp{\frac{1}{d}}\biggfp{\frac{r}{d}}\biggfp{\frac{d-r}{d}}\biggfp{\frac{d-1}{d}}
{\biggfp{1+j+jp}}^{4}}
\right\} \\ & 
+p^2 \left\{
\sum_{j=0}^{\left\lfloor \frac{p-1}{d} \right\rfloor} 
\frac{\biggfp{\frac{1}{d}+j}\biggfp{\frac{r}{d}+j}
\biggfp{\frac{d-r}{d}+j}\biggfp{\frac{d-1}{d}+j}}
{\biggfp{\frac{1}{d}}\biggfp{\frac{r}{d}}\biggfp{\frac{d-r}{d}}\biggfp{\frac{d-1}{d}}
{\biggfp{1+j}}^{4}} 
\right. \\ &\left. \qquad \; \;
 - \sum_{j=\left\lfloor \frac{p-1}{d} \right\rfloor+1}^{\left\lfloor \frac{r(p-1)}{d} \right\rfloor}
\frac{\biggfp{\frac{d+1}{d}+j}\biggfp{\frac{r}{d}+j}
\biggfp{\frac{d-r}{d}+j}\biggfp{\frac{d-1}{d}+j}}
{\biggfp{\frac{1}{d}}\biggfp{\frac{r}{d}}\biggfp{\frac{d-r}{d}}\biggfp{\frac{d-1}{d}}
{\biggfp{1+j}}^{4}}
\right. \\ &\left.  \qquad \; \;
+\sum_{j=\left\lfloor \frac{r(p-1)}{d} \right\rfloor+1}^{\left\lfloor (d-r)\frac{p-1}{d} \right\rfloor} 
\frac{\biggfp{\frac{d+1}{d}+j}\biggfp{\frac{d+r}{d}+j}
\biggfp{\frac{d-r}{d}+j}\biggfp{\frac{d-1}{d}+j}}
{\biggfp{\frac{1}{d}}\biggfp{\frac{r}{d}}\biggfp{\frac{d-r}{d}}\biggfp{\frac{d-1}{d}}
{\biggfp{1+j}}^{4}}
\right\} 
\pmod{p^3}.
\end{align*}

Central to the proof will be the relationship between the $\rep(m_k)$, for $2 \leq k \leq 4$, and the limits of summation of the individual sums in the expanded $_4G$ above. These relationships are outlined below and the reader may want to refer to them throughout the rest of the proof.\\
\begin{itemize}
\item 
$\rep(m_4)= \lfloor \tfrac{p-1}{d} \rfloor + 1$ .\\[9pt] 
\item
$\rep(m_3)= \bigl\lfloor \frac{r(p-1)}{d} \bigr\rfloor + 1 +
\begin{cases}
1 & \textup{if } p \equiv r \pmod{d} \textup{ and } r^2 \equiv 1 \pmod{d},\\[3pt]
1 & \textup{if } p \equiv d-r \pmod{d} \textup{ and } r^2 \equiv -1 \pmod{d},\\[3pt]
0 & \textup{otherwise.}
\end{cases}
$  \\[12pt]
\item
$\rep(m_2) = \bigl\lfloor (d-r)\frac{(p-1)}{d} \bigr\rfloor + 1 +
\begin{cases}
1 & \textup{if } p \equiv r \pmod{d} \textup{ and } r^2 \equiv -1 \pmod{d},\\[3pt]
1 & \textup{if } p \equiv d-r \pmod{d} \textup{ and } r^2 \equiv 1 \pmod{d},\\[3pt]
1 & \textup{if } p \equiv  d-1 \pmod{d},\\[3pt]
0 & \textup{otherwise.}
\end{cases}
$ \\[6pt] 
\end{itemize}

 \noindent We now consider $\gfp{\frac{d+1}{d}+j+jp}$ and $\gfp{\frac{d+1}{d}+j}$. 
As $\rep\bigl(\tfrac{d-1}{d}\bigr)=p+1-\rep\bigl(\tfrac{1}{d}\bigr)$ by Proposition \ref{prop_repOneminus}, we have that
\begin{equation*}
\tfrac{1}{d}+j +jp \in p\mathbb{Z}_p \Longleftrightarrow \tfrac{1}{d}+j \in p\mathbb{Z}_p 
\Longleftrightarrow \rep\left(\tfrac{1}{d}\right)+j \in p\mathbb{Z}_p
\Longleftrightarrow \rep\left(\tfrac{1}{d}\right)+j =p
\Longleftrightarrow j= \rep\left(\tfrac{d-1}{d}\right) -1.
\end{equation*}
Consequently, we see that the only time that $\tfrac{1}{d}+j \in p\mathbb{Z}_p$ for $\lfloor \tfrac{p-1}{d} \rfloor +1 \leq j \leq \lfloor (d-r)\tfrac{p-1}{d}\rfloor$ is when $p\equiv r \pmod{d}$ with $r^2\equiv 1 \pmod{d}$ or $p\equiv d-r \pmod{d}$ with $r^2\equiv -1 \pmod{d}$ and in these cases $j=\rep(m_3)-1=\bigl\lfloor  \tfrac{r(p-1)}{d} \bigr\rfloor +1$.
\noindent Therefore, using Proposition \ref{prop_pGamma} (1) we get that for $\lfloor \tfrac{p-1}{d}\rfloor +1 \leq j \leq \lfloor (d-r)\tfrac{p-1}{d} \rfloor$,
\begin{equation*}
\gfp{\tfrac{d+1}{d}+j}=
\begin{cases}
-\gfp{\tfrac{1}{d}+j} & \text{if } j=\bigl\lfloor \tfrac{r(p-1)}{d} \bigr\rfloor +1,\; p\equiv r \imod{d}, \;r^2\equiv 1 \imod{d},\\[3pt]
-\gfp{\tfrac{1}{d}+j} & \text{if }j=\bigl\lfloor \tfrac{r(p-1)}{d} \bigr\rfloor +1, \;p\equiv d-r \imod{d}, \; r^2\equiv -1 \imod{d},\\[3pt]
-\gfp{\tfrac{1}{d}+j} \left(\tfrac{1}{d}+j\right) & \textup{otherwise,}
\end{cases}
\end{equation*}
and that 
$$\gfp{\tfrac{d+1}{d}+j+jp}=-\left(\tfrac{1}{d}+j+jp\right)\gfp{\tfrac{1}{d}+j+jp}$$
for $\lfloor \tfrac{p-1}{d} \rfloor +1 \leq j \leq \bigl\lfloor \tfrac{r(p-1)}{d} \bigr\rfloor$.
Considering $\gfp{\frac{d+r}{d}+j}$ in a similar fashion, we get that
$$\gfp{\tfrac{d+r}{d}+j}=-\left(\tfrac{r}{d}+j\right)\gfp{\tfrac{r}{d}+j}$$
for $\bigl\lfloor \frac{r(p-1)}{d} \bigr\rfloor+1 \leq j \leq  {\lfloor (d-r)\frac{p-1}{d} \rfloor}$. 
Applying these results and substituting $\{m_k\}$ for $\{\frac{1}{d}, \frac{r}{d}, \frac{d-r}{d}, \frac{d-1}{d}\}$ yields 
\begin{align*}
&{_{4}G} \left(\tfrac{1}{d} , \tfrac{r}{d}, 1-\tfrac{r}{d} , 1-\tfrac{1}{d}\right)_p
\equiv
\sum_{j=0}^{\left\lfloor \frac{p-1}{d} \right\rfloor} 
\left[ \, \prod_{k=1}^{4}  \frac{\biggfp{m_k+j+jp+jp^2}}
{\biggfp{m_k}
{\biggfp{1+j+jp+jp^2}}}\right] 
\\  &
 +p \left\{
\sum_{j=0}^{\left\lfloor \frac{p-1}{d} \right\rfloor} 
\left[ \, \prod_{k=1}^{4}  \frac{ \biggfp{m_k+j+jp}}
{ \biggfp{m_k}
{\biggfp{1+j+jp}}}\right] 
 + \sum_{j=\left\lfloor \frac{p-1}{d} \right\rfloor+1}^{\left\lfloor \frac{r(p-1)}{d} \right\rfloor}
\left[ \, \prod_{k=1}^{4} \frac{ \biggfp{m_k+j+jp} }
{ \biggfp{m_k}
{\biggfp{1+j+jp}}}\right] 
 \bigl(\tfrac{1}{d}+j+jp\bigr)
\right\} \\ &
+p^2 \left\{
\sum_{j=0}^{\left\lfloor \frac{p-1}{d} \right\rfloor} 
\left[ \,\prod_{k=1}^{4} \frac{ \biggfp{m_k+j}}
{ \biggfp{m_k}
{\biggfp{1+j}}} \right] 
+ \sum_{j=\left\lfloor \frac{p-1}{d} \right\rfloor+1}^{\left\lfloor \frac{r(p-1)}{d} \right\rfloor}
\left[\,\prod_{k=1}^{4} \frac{ \biggfp{m_k+j}}
{ \biggfp{m_k}
{\biggfp{1+j}}} \right] 
 \bigl(\tfrac{1}{d}+j\bigr)
\right. \\ & \left. \qquad \quad \;
+\sum_{j=\left\lfloor \frac{r(p-1)}{d} \right\rfloor+1}^{\rep(m_3)-1} 
\left[\, \prod_{k=1}^{4} \frac{ \biggfp{m_k+j} }
{\biggfp{m_k}
{\biggfp{1+j}}} \right]
\bigl(\tfrac{r}{d}+j\bigr)
\right. \\ & \left. \qquad \quad \;
+\sum_{j=\rep(m_3)}^{\left\lfloor (d-r)\frac{p-1}{d} \right\rfloor} 
\left[\, \prod_{k=1}^{4} \frac{ \biggfp{m_k+j} }
{ \biggfp{m_k}
{\biggfp{1+j}}} \right] 
\bigl(\tfrac{1}{d}+j\bigr)\bigl(\tfrac{r}{d}+j\bigr)
\right\} 
\pmod{p^3},
\end{align*}
where the second to last sum is vacuous unless $p\equiv r \pmod{d}$ and $r^2\equiv 1 \pmod{d}$ or $p\equiv d-r \pmod{d}$ and $r^2\equiv -1 \pmod{d}$. 
By Proposition \ref{prop_pGammaCong} (2) we see that, for $1 \leq k \leq 4$,
\begin{equation*}
\biggfp{m_k+j+jp+jp^2}\\
\equiv
\biggfp{m_k+j}
\left[1+(jp+jp^2)\,G_1(m_k+j)+\tfrac{j^2p^2}{2}\,G_2(m_k+j)\right]  
\pmod{p^3},
\end{equation*}
and also
\begin{equation*}
\biggfp{1+j+jp+jp^2}^{4}
\equiv \biggfp{1+j}^4
\left[1+(jp+jp^2)\, G_1(1+j)
+\tfrac{j^2p^2}{2}\,G_2(1+j)\right]^4
\pmod{p^3}.
\end{equation*}

\noindent Multiplying the numerator and denominator by 
$${1-4(jp+jp^2)\;G_1(1+j)-2j^2p^2\, \left(G_2(1+j)-5\,G_1(1+j)^2\right)}$$ we get that
\begin{equation*}
\frac{  \displaystyle \prod_{k=1}^{4} \left[ 1+(jp+jp^2)\,G_1(m_k+j)+\tfrac{j^2p^2}{2}\,G_2(m_k+j)\right] }
{\left[1+(jp+jp^2)\,G_1(1+j)
+\frac{j^2p^2}{2}\,G_2(1+j)\right]^4}
\equiv
1+(jp+jp^2)A(j)+j^2p^2B(j)
\imod{p^3},
\end{equation*}
where
$$A(j):=\sum_{k=1}^{4} \Bigl(G_1(m_k+j)-G_1(1+j)\Bigr)$$
and
$$B(j):=\frac{1}{2}\left[A(j)^2 - \sum_{k=1}^{4} \Bigl(G_1(m_k+j)^2-G_2(m_k+j)
-G_1(1+j)^2 +G_2(1+j)\Bigr)\right]\ffs$$\\

We note that both $A(j)$ and $B(j)$ $\in \mathbb{Z}_p$ by Proposition \ref{prop_pGammaCong} (1). Applying the above and noting that $\gfp{1+j}=(-1)^{1+j} j!$ for $j<p$, we get, after rearranging,
\begin{align}\label{for_4G2}
\notag {_{4}G}& \left(\tfrac{1}{d} , \tfrac{r}{d}, 1-\tfrac{r}{d} , 1-\tfrac{1}{d}\right)_p
\equiv
\sum_{j=0}^{\left\lfloor \frac{p-1}{d} \right\rfloor} 
\left[ \, \prod_{k=1}^{4} \frac{ \biggfp{m_k+j}}
{\biggfp{m_k}\, j!} \right]
\\[3pt] \notag & \: \,
+p \left\{
\sum_{j=0}^{\left\lfloor \frac{p-1}{d} \right\rfloor} 
\left[ \, \prod_{k=1}^{4} \frac{ \biggfp{m_k+j}}
{\biggfp{m_k}\, j!} \right]
\Bigl[1+jA(j)\Bigr]
+ \sum_{j=\left\lfloor \frac{p-1}{d} \right\rfloor+1}^{\left\lfloor \frac{r(p-1)}{d} \right\rfloor} 
\left[ \, \prod_{k=1}^{4} \frac{ \biggfp{m_k+j}}
{\biggfp{m_k}\, j!} \right]
\Bigl[\tfrac{1}{d}+j\Bigr]
\right\} 
\\[3pt] \notag &
 +p^2 \left\{
\sum_{j=0}^{\left\lfloor \frac{p-1}{d} \right\rfloor} 
\left[ \, \prod_{k=1}^{4} \frac{ \biggfp{m_k+j}}
{\biggfp{m_k}\, j!} \right]
\Bigl[1+2jA(j)+j^2B(j)\Bigr]
\right. \\[3pt] \notag & \left. \qquad \quad \;
+\sum_{j=\left\lfloor \frac{p-1}{d} \right\rfloor+1}^{\left\lfloor \frac{r(p-1)}{d} \right\rfloor} 
\left[ \, \prod_{k=1}^{4} \frac{ \biggfp{m_k+j}}
{\biggfp{m_k}\, j!} \right]
\Bigl[\Bigl(\tfrac{1}{d}+j\Bigr)\Bigl(1+jA(j)\Bigr)+j\Bigr]
\right. \\[3pt] \notag &\left.  \qquad \quad \;
+\sum_{j=\left\lfloor \frac{r(p-1)}{d} \right\rfloor+1}^{\rep(m_3)-1} 
\left[ \, \prod_{k=1}^{4} \frac{ \biggfp{m_k+j}}
{\biggfp{m_k}\, j!} \right]
\Bigl[\tfrac{r}{d}+j\Bigr]
\right. \\[3pt] &\left.  \qquad \quad \;
+\sum_{j=\rep(m_3)}^{\left\lfloor (d-r)\frac{p-1}{d} \right\rfloor} 
\left[ \, \prod_{k=1}^{4} \frac{ \biggfp{m_k+j}}
{\biggfp{m_k}\, j!} \right]
\Bigl[\Bigl(\tfrac{1}{d}+j\Bigr)\Bigl(\tfrac{r}{d}+j\Bigr)\Bigr]
\right\}
\pmod{p^3}.
\end{align}

\noindent Proposition \ref{prop_gammapmd} gives us
\begin{align}\label{for_4F32}
\notag {_{4}F_3} & \Biggl[ \begin{array}{cccc} \frac{1}{d}, & \frac{r}{d}, & 1-\frac{r}{d}, & 1-\frac{1}{d}\vspace{.05in}\\
\phantom{\frac{1}{d}} & 1, & 1, & 1 \end{array}
\bigg| \; 1 \Biggr]_{p-1} =\sum_{j=0}^{p-1} \,
\prod_{k=1}^{4} \frac{\ph{m_k}{j}}{{j!}}
\\ \notag &
\equiv
\sum_{j=0}^{\rep(m_4)-1}
\left[ \, \prod_{k=1}^{4} \frac{ \biggfp{m_k+j}}
{\biggfp{m_k}\, j!} \right]
+\sum_{j=\rep(m_4)}^{\rep(m_3)-1} 
\left[ \, \prod_{k=1}^{4} \frac{ \biggfp{m_k+j}}
{\biggfp{m_k}\, j!} \right]
\bigl(m_1+p-\rep(m_1)\bigr)
\\ \notag & \qquad \: \,
 {+\sum_{j=\rep(m_3)}^{\rep(m_2) - 1} 
\left[ \, \prod_{k=1}^{4} \frac{ \biggfp{m_k+j}}
{\biggfp{m_k}\, j!} \right]
\bigl(m_1+p-\rep(m_1)) \bigl(m_2+p-\rep(m_2))}
\\ \notag &
 \equiv
\sum_{j=0}^{\rep(m_4)-1}
\left[ \, \prod_{k=1}^{4} \frac{ \biggfp{m_k+j}}
{\biggfp{m_k}\, j!} \right]
+p\sum_{j=\rep(m_4)}^{\rep(m_3)-1} 
\left[ \, \prod_{k=1}^{4} \frac{ \biggfp{m_k+j}}
{\biggfp{m_k}\, j!} \right]
\Bigl(\tfrac{1}{d}\Bigr)
\\& \qquad \:\,
+p^2\sum_{j=\rep(m_3)}^{\rep(m_2) - 1} 
\left[ \, \prod_{k=1}^{4} \frac{ \biggfp{m_k+j}}
{\biggfp{m_k}\, j!} \right]
\Bigl(\tfrac{1}{d}\Bigr) \Bigl(\tfrac{r}{d}\Bigr)
\pmod {p^3}.
\end{align}
This last step uses the fact that, by definition,
\begin{equation*}
m_1+p - \rep(m_1)=\tfrac{a}{d}+p-\left(p- \lfloor \tfrac{p-1}{d} \rfloor \right)
=\tfrac{a}{d}+ \tfrac{p-a}{d} = \tfrac{p}{d},\\[0pt]
\end{equation*}
and
\begin{equation*}
m_2+p - \rep(m_2)
=\tfrac{s}{d}+ \tfrac{r(p-a)}{d} +\left\lfloor \tfrac{ar}{d} \right\rfloor
=\tfrac{ar}{d} - \left\lfloor \tfrac{ar}{d} \right\rfloor + \tfrac{rp}{d} -\tfrac{ar}{d} + \left\lfloor \tfrac{ar}{d} \right\rfloor
= \tfrac{rp}{d}.
\end{equation*}

\noindent Therefore, combining (\ref{for_4G2}) and (\ref{for_4F32}), it suffices to prove
\begin{align}\label{Resid1_4G2}
\notag & \sum_{j=0}^{\rep(m_4)-1} 
\left[ \, \prod_{k=1}^{4} \frac{ \biggfp{m_k+j}}
{\biggfp{m_k}\, j!} \right]
\Bigl[1+jA(j)\Bigr]
+ \sum_{j=\rep(m_4)}^{\left\lfloor \frac{r(p-1)}{d} \right\rfloor} 
\left[ \, \prod_{k=1}^{4} \frac{ \biggfp{m_k+j}}
{\biggfp{m_k}\, j!} \right]
\Bigl[j\Bigr]
\\ \notag &
{- \sum_{j=\left\lfloor \frac{r(p-1)}{d} \right\rfloor+1}^{\rep(m_3)-1} 
\left[ \, \prod_{k=1}^{4} \frac{ \biggfp{m_k+j}}
{\biggfp{m_k}\, j!} \right]
\Bigl[\tfrac{1}{d}\Bigr]}
 +p \left\{
\sum_{j=0}^{\rep(m_4)-1} 
\left[ \, \prod_{k=1}^{4} \frac{ \biggfp{m_k+j}}
{\biggfp{m_k}\, j!} \right]
\Bigl[1+2jA(j)+j^2B(j)\Bigr]
\right. \\ \notag & \left.
{+\sum_{j=\rep(m_4)}^{\left\lfloor \frac{r(p-1)}{d} \right\rfloor} 
\left[ \, \prod_{k=1}^{4} \frac{ \biggfp{m_k+j}}
{\biggfp{m_k}\, j!} \right]
\Bigl[\Bigl(\tfrac{1}{d}+j\Bigr)\Bigl(1+jA(j)\Bigr)+j\Bigr]}
\right. \\ \notag & \left. 
{+\sum_{j=\left\lfloor \frac{r(p-1)}{d} \right\rfloor+1}^{\rep(m_3)-1} 
\left[ \, \prod_{k=1}^{4} \frac{ \biggfp{m_k+j}}
{\biggfp{m_k}\, j!} \right]
\Bigl[\tfrac{r}{d}+j\Bigr]} 
{+\sum_{j=\rep(m_3)}^{\left\lfloor (d-r)\frac{p-1}{d} \right\rfloor} 
\left[ \, \prod_{k=1}^{4} \frac{ \biggfp{m_k+j}}
{\biggfp{m_k}\, j!} \right]
\Bigl[j^2+j\Bigl(\tfrac{1}{d}+\tfrac{r}{d}\Bigr)\Bigr]}
\right. \\ & \left. 
- \sum_{j=\left\lfloor (d-r)\frac{p-1}{d} \right\rfloor+1}^{\rep(m_2) - 1} 
\left[ \, \prod_{k=1}^{4} \frac{ \biggfp{m_k+j}}
{\biggfp{m_k}\, j!} \right]
\Bigl[\Bigl(\tfrac{1}{d}\Bigr) \Bigl(\tfrac{r}{d}\Bigr)\Bigr]
\right\} 
\equiv s(p)
\pmod{p^2}.
\end{align}

\noindent We note that the terms inside the braces in (\ref{Resid1_4G2}) need only be considered modulo $p$ and can be rewritten as follows.
\begin{align}\label{Resid1_4G2_split}
\notag &\sum_{j=0}^{\rep(m_4)-1} 
\left[ \, \prod_{k=1}^{4} \frac{ \biggfp{m_k+j}}
{\biggfp{m_k}\, j!} \right]
\Bigl[1+2jA(j)+j^2B(j)\Bigr]
+\sum_{j=\rep(m_4)}^{\left\lfloor \frac{r(p-1)}{d} \right\rfloor} 
\left[ \, \prod_{k=1}^{4} \frac{ \biggfp{m_k+j}}
{\biggfp{m_k}\, j!} \right]
\Bigl[\Bigl(\tfrac{1}{d}\Bigr)\Bigl(1+jA(j)\Bigr)\Bigr]
\\ &
\notag +\sum_{j=\left\lfloor \frac{r(p-1)}{d} \right\rfloor+1}^{\rep(m_3)-1} 
\left[ \, \prod_{k=1}^{4} \frac{ \biggfp{m_k+j}}
{\biggfp{m_k}\, j!} \right]
\Bigl[\tfrac{r}{d}-j-j^2A(j)\Bigr]
+\sum_{j=\rep(m_4)}^{\rep(m_3)-1} 
\left[ \, \prod_{k=1}^{4} \frac{ \biggfp{m_k+j}}
{\biggfp{m_k}\, j!} \right]
\Bigl[2j+j^2A(j)\Bigr]
\\ &
\notag +\sum_{j=\rep(m_3)}^{\left\lfloor (d-r)\frac{p-1}{d} \right\rfloor} 
\left[ \, \prod_{k=1}^{4} \frac{ \biggfp{m_k+j}}
{\biggfp{m_k}\, j!} \right]
\Bigl[j\Bigl(\tfrac{1}{d}+\tfrac{r}{d}\Bigr)\Bigr]
- \sum_{j=\left\lfloor (d-r)\frac{p-1}{d} \right\rfloor+1}^{\rep(m_2) - 1} 
\left[ \, \prod_{k=1}^{4} \frac{ \biggfp{m_k+j}}
{\biggfp{m_k}\, j!} \right]
\Bigl[\Bigl(\tfrac{1}{d}\Bigr) \Bigl(\tfrac{r}{d}\Bigr)+j^2\Bigr] 
\\ &
+\sum_{\rep(m_3)}^{\rep(m_2)-1} 
\left[ \, \prod_{k=1}^{4} \frac{ \biggfp{m_k+j}}
{\biggfp{m_k}\, j!} \right]
\Bigl[j^2\Bigr].
 \end{align}

\noindent We now consider the first, fourth and last terms of (\ref{Resid1_4G2_split}). Define 
\begin{multline}\label{for_Xj}
X(j):=\sum_{j=0}^{\rep(m_4)-1} 
\left[ \, \prod_{k=1}^{4} \frac{ \biggfp{m_k+j}}
{\biggfp{m_k}\, j!} \right]
\Bigl[1+2jA(j)+j^2B(j)\Bigr]
\\
+\sum_{j=\rep(m_4)}^{\rep(m_3)-1} 
\left[ \, \prod_{k=1}^{4} \frac{ \biggfp{m_k+j}}
{\biggfp{m_k}\, j!} \right]
\Bigl[2j+j^2A(j)\Bigr]
+\sum_{\rep(m_3)}^{\rep(m_2)-1} 
\left[ \, \prod_{k=1}^{4} \frac{ \biggfp{m_k+j}}
{\biggfp{m_k}\, j!} \right]
\Bigl[j^2\Bigr].
\end{multline}

\noindent We will show $X(j)\equiv0 \pmod{p}$. We start by examining $A(j)$, $B(j)$, $\prod_{k=1}^{4} \biggfp{m_k+j}$ and $\prod_{k=1}^{4} \biggfp{m_k}$ modulo $p$. Define, for $t\in \{1,2\}$,
\begin{align*} 
\delta_t:=
\begin{cases}
0 & \text{if } 0\leq j\leq \rep(m_4)-1, \\[6pt]
\frac{1}{p^t} &\text{if }  \rep(m_4) \leq j\leq \rep(m_3)-1,\\[6pt]
\frac{2}{p^t} & \text{if }  \rep(m_3)\leq j\leq \rep(m_2)-1,\\[6pt]
\end{cases} 
&& \textup{and} &&
\gamma:=
\begin{cases}
1 & \text{if } 0\leq j\leq \rep(m_4)-1, \\[6pt]
\frac{1}{p} & \text{if }  \rep(m_4) \leq j\leq \rep(m_3)-1,\\[6pt]
\frac{1}{p^2} & \text{if }  \rep(m_3)\leq j\leq \rep(m_2)-1.\\[6pt]
\end{cases}
\end{align*}
Then using Lemmas \ref{lem_SumGammaG1} and \ref{lem_SumGammaG2} we see that, for $j \leq \rep(m_2)-1$,
\begin{align}\label{for_4G2Ap}
A(j) 
\equiv \sum_{k=1}^{4} \Bigl(H_{\rep(m_k)-1+j}^{(1)} - H_{j}^{(1)}\Bigr)- \delta_1
\pmod{p}
\end{align}
and
\begin{multline}\label{for_4G2Bp}
\sum_{k=1}^{4} \Bigl(G_1\left(m_k+j\right)^2-G_2\left(m_k+j\right)
-G_1\left(1+j\right)^2 +G_2\left(1+j\right)\Bigr)\\
\equiv \sum_{k=1}^{4} \Biggl(H_{\rep(m_k)+j-1}^{(2)}- \hspace{1pt} H_{j}^{(2)}\Biggr)-\delta_2
\pmod{p}.
\end{multline}
\noindent Using Lemma \ref{lem_ProdGammap} we get that, for $j \leq \rep(m_2)-1$,
\begin{equation*}
\prod_{k=1}^{4}  \biggfp{m_k+j}\\
\equiv \left[\,\prod_{k=1}^{4} \left(\rep(m_k)+j-1\right)! \,(-1)^{\rep(m_k)+j}\right] \cdot \gamma
\pmod{p}
\end{equation*}
and by Proposition \ref{prop_pGamma} (2) we see that
\begin{align*}
\prod_{k=1}^{4} \biggfp{m_k} &
= \prod_{k=1}^{2} (-1)^{\rep(m_k)}=(-1)^{\rep(m_1)+\rep(m_2)}=\pm1.
\end{align*}

\noindent 
Substituting for $A(j)$, $B(j)$, $\prod_{k=1}^{4} \biggfp{m_k+j}$ and $\prod_{k=1}^{4} \biggfp{m_k}$ modulo $p$ into (\ref{for_Xj}) we have
\begin{align*}
\pm X(j)
\equiv &
\sum_{j=0}^{\rep(m_4)-1} 
\Biggl[\, \prod_{k=1}^{4} \ph{j+1}{\rep(m_k)-1} \Biggr]
\Biggl[1+2j \sum_{k=1}^{4} \Bigl(H_{\rep(m_k)-1+j}^{(1)}- H_{j}^{(1)}\Bigr)
\\ & 
+\frac{j^2}{2} \Biggl(\left(\sum_{k=1}^{4} \Bigl(H_{\rep(m_k)-1+j}^{(1)}- H_{j}^{(1)}\Bigr)\right)^2
-\sum_{k=1}^{4} \Bigl(H_{\rep(m_k)-1+j}^{(2)}- H_{j}^{(2)}\Bigr)
\Biggr)
\Biggr]
\\ &
+\sum_{j=\rep(m_4)}^{\rep(m_3)-1}
\Biggl[\, \prod_{k=1}^{4} \ph{j+1}{\rep(m_k)-1} \Biggr]
\Biggl[ \frac{1}{p} \Biggr]
\Biggl[2j+j^2 \sum_{k=1}^{4} \Bigl(H_{\rep(m_k)-1+j}^{(1)}- H_{j}^{(1)}\Bigr)
 -\frac{j^2}{p} \Biggr]
\\ &
+\sum_{\rep(m_3)}^{\rep(m_2)-1} 
\Biggl[\, \prod_{k=1}^{4} \ph{j+1}{\rep(m_k)-1} \Biggr]
\Biggl[ \frac{1}{p^2} \Biggr]
\Biggl[ j^2 \Biggr]
\pmod{p}.
\end{align*}

We can simplify this expression by combining the three terms into one single summation.
For $\rep(m_4) \leq j \leq \rep(m_3) -1$ we note that $\prod_{k=1}^{4} \ph{j+1}{\rep(m_k)-1}  \in p\mathbb{Z}_p$. Also, we see from (\ref{for_4G2Ap}) and (\ref{for_4G2Bp}) that $\sum_{k=1}^{4} \Bigl(H_{\rep(m_k)-1+j}^{(1)}- H_{j}^{(1)}\Bigr)-\frac{1}{p} \in \mathbb{Z}_p$ and $\sum_{k=1}^{4} \Bigl(H_{\rep(m_k)-1+j}^{(2)}- H_{j}^{(2)}\Bigr)-\frac{1}{p^2} \in \mathbb{Z}_p$, for $j$ in the same range.
Similarly, for $\rep(m_3) \leq j \leq \rep(m_2)- 1$ we have that $\prod_{k=1}^{4} \ph{j+1}{\rep(m_k)-1}  \in p^2\mathbb{Z}_p$, $\sum_{k=1}^{4} \Bigl(H_{\rep(m_k)-1+j}^{(1)}- H_{j}^{(1)}\Bigr)-\frac{2}{p} \in \mathbb{Z}_p$ and $\sum_{k=1}^{4} \Bigl(H_{\rep(m_k)-1+j}^{(2)}- H_{j}^{(2)}\Bigr)-\frac{2}{p^2} \in \mathbb{Z}_p$.
Consequently,
\begin{multline*}
\pm X(j)
\equiv
\sum_{j=0}^{\rep(m_2)-1}  
\Biggl[\, \prod_{k=1}^{4} \ph{j+1}{\rep(m_k)-1} \Biggr]
\Biggl[1+2j\sum_{k=1}^{4} \Bigl(H_{\rep(m_k)-1+j}^{(1)} -H_{j}^{(1)}\Bigr)
\\
+\frac{j^2}{2} \Biggl(\left(\sum_{k=1}^{4} \Bigl(H_{\rep(m_k)-1+j}^{(1)} - H_{j}^{(1)}\Bigr)\right)^2
-\sum_{k=1}^{4} \Bigl(H_{\rep(m_k)-1+j}^{(2)} - H_{j}^{(2)}\Bigr)
\Biggr)
\Biggr]
\pmod{p}.
\end{multline*}
\noindent Note we can extend the upper limit of this sum to $j=p-1$ as $\prod_{k=1}^{4} \ph{j+1}{\rep(m_k)-1} \in p^3\mathbb{Z}_p$ for $\rep(m_2) \leq j\leq p-1$.
By Lemmas \ref{lem_P} and \ref{lem_Q} with $n=4$ and $a_i=\rep(m_i)-1$ for $1 \leq i \leq 4$ we get that
\begin{equation}\label{for_4G2X0}
X(j)\equiv \pm(1-1) \equiv 0 \pmod{p}.
\end{equation}

Accounting for (\ref{for_4G2X0}) in (\ref{Resid1_4G2}), via (\ref{Resid1_4G2_split}) and (\ref{for_Xj}), means we need only show
\begin{align}\label{Resid2_4G2}
\notag &\sum_{j=0}^{\rep(m_4)-1} 
\left[ \, \prod_{k=1}^{4} \frac{ \biggfp{m_k+j}}
{\biggfp{m_k}\, j!} \right]
\Bigl[1+jA(j)\Bigr]
+ \sum_{j=\rep(m_4)}^{\left\lfloor \frac{r(p-1)}{d} \right\rfloor} 
\left[ \, \prod_{k=1}^{4} \frac{ \biggfp{m_k+j}}
{\biggfp{m_k}\, j!} \right]
\Bigl[j\Bigr]\\[3pt] \notag
& - \sum_{j=\left\lfloor \frac{r(p-1)}{d} \right\rfloor+1}^{\rep(m_3)-1} 
\left[ \, \prod_{k=1}^{4} \frac{ \biggfp{m_k+j}}
{\biggfp{m_k}\, j!} \right]
\Bigl[\tfrac{1}{d}\Bigr]
+p \left\{
\sum_{j=\rep(m_4)}^{\left\lfloor \frac{r(p-1)}{d} \right\rfloor} 
\left[ \, \prod_{k=1}^{4} \frac{ \biggfp{m_k+j}}
{\biggfp{m_k}\, j!} \right]
\Bigl[\Bigl(\tfrac{1}{d}\Bigr)\Bigl(1+jA(j)\Bigr)\Bigr]
\right. \\[3pt] \notag &\left. 
+\sum_{j=\left\lfloor \frac{r(p-1)}{d} \right\rfloor+1}^{\rep(m_3)-1} 
\left[ \, \prod_{k=1}^{4} \frac{ \biggfp{m_k+j}}
{\biggfp{m_k}\, j!} \right]
\Bigl[\tfrac{r}{d}-j-j^2A(j)\Bigr]
+\sum_{j=\rep(m_3)}^{\left\lfloor (d-r)\frac{p-1}{d} \right\rfloor} 
\left[ \, \prod_{k=1}^{4} \frac{ \biggfp{m_k+j}}
{\biggfp{m_k}\, j!} \right]
\Bigl[j\Bigl(\tfrac{1}{d}+\tfrac{r}{d}\Bigr)\Bigr]
\right. \\[3pt]  &\left.
- \sum_{j=\left\lfloor (d-r)\frac{p-1}{d} \right\rfloor+1}^{\rep(m_2) - 1} 
\left[ \, \prod_{k=1}^{4} \frac{ \biggfp{m_k+j}}
{\biggfp{m_k}\, j!} \right]
\Bigl[\Bigl(\tfrac{1}{d}\Bigr) \Bigl(\tfrac{r}{d}\Bigr)+j^2\Bigr]
\right\} 
\equiv s(p)
\pmod{p^2}.
\end{align}

\noindent We now convert these remaining terms to an expression involving binomial coefficients and harmonic sums and then use the results of Section 3.3 to simplify them. First we define 
\begin{align*}
\Bin \hspace{1pt}(j) &:=  \bin{\rep(m_1)-1+j}{j} \bin{\rep(m_1)-1}{j}  \bin{\rep(m_2)-1+j}{j} \bin{\rep(m_2)-1}{j}\fc\\[15pt]
\mathcal{H}(j)&:= H_{\rep(m_1)-1+j}^{(1)}+H_{\rep(m_1)-1-j}^{(1)}+H_{\rep(m_2)-1+j}^{(1)}+H_{\rep(m_2)-1-j}^{(1)}-4H_j^{(1)},\\[15pt]
\mathcal{A}(j)&:= \Bigl(\rep(m_1)-m_1\Bigr)\left(H_{\rep(m_1)-1+j}^{(1)}-H_{\rep(m_{4})-1+j}^{(1)}\right)\\[6pt]
&  \qquad \qquad \qquad \qquad\qquad \quad
+ \Bigl(\rep(m_2)-m_2\Bigr)\left(H_{\rep(m_2)-1+j}^{(1)}-H_{\rep(m_{3})-1+j}^{(1)}\right),
\intertext{and}
\mathcal{B}(j)&:= \Bigl(\rep(m_1)-m_1\Bigr)\left(H_{\rep(m_1)-1+j}^{(2)}-H_{\rep(m_{4})-1+j}^{(2)}\right)\\[6pt]
&  \qquad \qquad \qquad \qquad\qquad \quad
+ \Bigl(\rep(m_2)-m_2\Bigr)\left(H_{\rep(m_2)-1+j}^{(2)}-H_{\rep(m_{3})-1+j}^{(2)}\right).
\end{align*}

\noindent By Lemma \ref{lem_ProdGammap2} we see that for $j<\rep({m_2})$
\begin{align}\label{for_4G1Binp2}
\notag\prod_{k=1}^{4} \frac{\biggfp{m_k+j}}{\biggfp{m_k} j!}
&\equiv
\Biggl[ \prod_{i=1}^{2} \bin{\rep(m_i)-1+j}{j} \bin{\rep(m_i)-1}{j}\Biggr]\cdot \alpha \\[6pt]
\notag& \qquad \cdot \left[1-\sum_{i=1}^{2}\Bigl(\rep(m_i)-m_i\Bigr)\left(H_{\rep(m_i)-1+j}^{(1)}-H_{\rep(m_{5-i})-1+j}^{(1)}-\beta_i\right)\right]\\[6pt]
&\equiv \Bin \hspace{1pt}(j) \cdot \alpha \cdot \left[1-\mathcal{A}(j) + \sum_{i=1}^{2}\Bigl(\rep(m_i)-m_i\Bigr) \beta_i\right]
\pmod{p^2},
\end{align}
where 
\begin{align*}
\alpha=
\begin{cases}
1 & \text{if } 0\leq j \leq \rep(m_4)-1,\\
\frac{1}{p} & \text{if } \rep(m_4) \leq j \leq \rep(m_3)-1,\\
\frac{1}{p^2} & \text{if } \rep(m_3) \leq j < \rep(m_2),
\end{cases}
&& \textup{and} &&
\beta_i=
\begin{cases}
0 & \text{if } 0\leq j \leq \rep(m_{5-i})-1,\\
\frac{1}{p} & \text{if } \rep(m_{5-i}) \leq j < \rep(m_i).
\end{cases}
\end{align*}

\noindent Again for $j<\rep({m_2})$, Lemma \ref{lem_SumGammap2} gives us 
\begin{align}\label{for_4G1Hp2}
\notag A(j)
&\equiv \sum_{i=1}^{2} 
\left( H_{\rep(m_i)-1+j}^{(1)}+H_{\rep(m_i)-1-j}^{(1)}-2\hspace{1pt} H_{j}^{(1)}\right)-\alpha^{\prime}\\
\notag &\qquad\qquad+\sum_{i=1}^{2} \left(\rep(m_i)-m_i\right) 
\left(H_{\rep(m_i)-1+j}^{(2)}-H_{\rep(m_{5-i})-1+j}^{(2)}-\beta_{i}^{\prime}\right)\\
&\equiv \mathcal{H}(j) -\alpha^{\prime} + \mathcal{B}(j) - \sum_{i=1}^{2} \left(\rep(m_i)-m_i\right) \beta_{i}^{\prime}
\pmod{p^2}
\end{align}
where 
\begin{align*}
\alpha^{\prime}=
\begin{cases}
0 & \text{if } 0\leq j \leq \rep(m_4)-1,\\
\frac{1}{p} & \text{if } \rep(m_4) \leq j \leq \rep(m_3)-1,\\
\frac{2}{p} & \text{if } \rep(m_3) \leq j < \rep(m_2),\;
\end{cases}
&& \textup{and} &&
\beta_{i}^{\prime}=
\begin{cases}
0 & \text{if } 0\leq j \leq \rep(m_{5-i})-1,\\
\frac{1}{p^2} & \text{if } \rep(m_{5-i}) \leq j < \rep(m_i).
\end{cases}
\end{align*}

\noindent Reducing (\ref{for_4G1Binp2}) and (\ref{for_4G1Hp2}) modulo $p$ respectively we see that for $j<\rep(m_2)$,
\begin{align}\label{for_4G1Binp}
\prod_{k=1}^{4} 
\frac{ \biggfp{m_k+j}}{\biggfp{m_k} j!}
&\equiv \Bin \hspace{1pt}(j) \cdot \alpha 
\pmod{p}
\end{align}
and\\[-24pt]
\begin{align}\label{for_4G1Hp}
A(j) \equiv \mathcal{H}(j) -\alpha^{\prime} 
\pmod{p},
\end{align}
as $\rep(m_i)-m_i \in p \mathbb{Z}_p$.
Therefore, using (\ref{for_4G1Binp2}),  (\ref{for_4G1Hp2}),  (\ref{for_4G1Binp}) and (\ref{for_4G1Hp}),  we see that (\ref{Resid2_4G2}) is equivalent to
\begin{multline}\label{Resid3_4G2}
\sum_{j=0}^{\rep(m_4)-1} 
\Bin \hspace{1pt}(j) \Bigl[1-\mathcal{A}(j) \Bigr]
\Bigl[1+j \mathcal{H}(j)  + j \mathcal{B}(j) \Bigr]
+ \sum_{j=\rep(m_4)}^{\left\lfloor \frac{r(p-1)}{d} \right\rfloor} 
 \Bigl[\tfrac{j \Bin(j)}{p}\Bigr] \Bigl[1-\mathcal{A}(j) + \tfrac{\rep(m_1)-m_1}{p}\Bigr]
 \\ 
- \sum_{j=\left\lfloor \frac{r(p-1)}{d} \right\rfloor +1}^{\rep(m_3) -1} 
 \Bigl[\tfrac{\Bin(j)}{dp}\Bigr] \Bigl[1-\mathcal{A}(j) + \tfrac{\rep(m_1)-m_1}{p}\Bigr]
+p \left\{
\sum_{j=\rep(m_4)}^{\left\lfloor \frac{r(p-1)}{d} \right\rfloor} 
 \Bigl[\tfrac{\Bin(j)}{dp}\Bigr]
\Bigl[1+j\mathcal{H}(j)-\tfrac{j}{p}\Bigr]
\right. \\ \shoveleft{\left. 
+\sum_{j=\left\lfloor \frac{r(p-1)}{d} \right\rfloor +1}^{\rep(m_3) -1}
 \Bigl[\tfrac{\Bin(j)}{p}\Bigr] \Bigl[\tfrac{r}{d}-j-j^2 \mathcal{H}(j)+\tfrac{j^2}{p}\Bigr]
+\sum_{j=\rep(m_3)}^{\left\lfloor (d-r)\frac{p-1}{d} \right\rfloor} 
\Bigl[\tfrac{\Bin(j)}{p^2}\Bigr]
\Bigl[j \Bigl(\tfrac{1}{d}+\tfrac{r}{d}\Bigr)\Bigr]
\right.} \\ \left. 
-\sum_{j=\left\lfloor (d-r)\frac{p-1}{d} \right\rfloor+1}^{\rep(m_2) - 1}
 \Bigl[\tfrac{\Bin(j)}{p^2}\Bigr]
\Bigl[\Bigl(\tfrac{1}{d}\Bigr)\Bigl(\tfrac{r}{d}\Bigr)+j^2\Bigr]
\right\} 
\equiv s(p) 
\pmod{p^2}.
\end{multline}

\noindent We now consider
\begin{equation*}
\sum_{j=0}^{\rep(m_2)- 1}
\Bin \hspace{1pt}(j) \Bigl[1+j \mathcal{H}(j) \Bigr]
+\sum_{j=0}^{\rep(m_2) - 1}
\Bin \hspace{1pt}(j)
\Bigl[ j \mathcal{B}(j) -\mathcal{A}(j) - j \mathcal{A}(j) \mathcal{H}(j) \Bigr]
\pmod{p^2}.\\
\end{equation*}
\noindent For $0\leq j \leq \rep(m_4)-1$ we see that
\begin{multline}\label{for_Bin1_m4}
\Bin \hspace{1pt}(j) \Bigl[1-\mathcal{A}(j) \Bigr]
\Bigl[1+j \mathcal{H}(j)  + j \mathcal{B}(j) \Bigr]\\[6pt]
\equiv
\Bin \hspace{1pt}(j) \Bigl[1+j \mathcal{H}(j)+ j \mathcal{B}(j) -\mathcal{A}(j) - j \mathcal{A}(j) \mathcal{H}(j) \Bigr]
\pmod{p^2},
\end{multline}
as $ \mathcal{A}(j)  \mathcal{B}(j) \in p^2 \mathbb{Z}_p$ for such $j$.
\noindent For $\rep(m_4) \leq j \leq \rep(m_3)-1$ we note the following facts:
\begin{itemize}
\emptyline
\item $\Bin \hspace{1pt}(j) \in p \mathbb{Z}_p$, which we can see from (\ref{for_4G1Binp});
\emptyline
\item $\mathcal{H}(j) -\frac{1}{p} \in \mathbb{Z}_p$, from (\ref{for_4G1Hp});
\emptyline
\item $\mathcal{A}(j)- (\rep(m_1)-m_1)\frac{1}{p} \in p \mathbb{Z}_p$; and
\emptyline
\item $\mathcal{B}(j)- (\rep(m_1)-m_1)\frac{1}{p^2} \in p \mathbb{Z}_p$.
\emptyline
\end{itemize}
The last two properties come directly from their definitions, noting that $\rep(m_i)-m_i \in p \mathbb{Z}_p$ by definition. Therefore, for $j$ in this range, we have
\begin{align*}
\Bin \hspace{1pt}(&j) \Bigl[1+j \mathcal{H}(j)+ j \mathcal{B}(j) -\mathcal{A}(j) - j \mathcal{A}(j) \mathcal{H}(j) \Bigr] \\[9pt]
&\equiv
\Bin \hspace{1pt}(j) \Bigl\{1+j \mathcal{H}(j) +j \tfrac{\rep(m_1)-m_1}{p^2} - \tfrac{\rep(m_1)-m_1}{p}-
j\left[\tfrac{1}{p} \left(\mathcal{A}(j)- \tfrac{\rep(m_1)-m_1}{p}\right)
\right.\\[9pt] &\left. \qquad \qquad 
 \qquad \qquad \qquad \qquad \qquad
\quad \; \; \,
+\tfrac{\rep(m_1)-m_1}{p} \left(\mathcal{H}(j)-\tfrac{1}{p}\right) +\tfrac{\rep(m_1)-m_1}{p^2}\right]\Bigr\}\\[9pt]
&\equiv
\Bin \hspace{1pt}(j) \Bigl[\Bigl(1+j \mathcal{H}(j)\Bigr)\left(1- \tfrac{\rep(m_1)-m_1}{p}\right) +2j \tfrac{\rep(m_1)-m_1}{p^2} -
\tfrac{j}{p} \mathcal{A}(j)\Bigr]
\pmod{p^2}
\end{align*}
Recall that $m_1:=\frac{a}{d}$, where $p \equiv a \pmod{d}$ with $a<d$, and $\rep(m_1)=p-\lfloor \frac{p-1}{d} \rfloor$. Hence
\begin{equation}\label{for_4G2RepOverp}
\rep(m_1)-m_1 = p- \lfloor \tfrac{p-1}{d} \rfloor - \tfrac{a}{d}
=  p- \tfrac{p-a}{d} - \tfrac{a}{d} = p\, \bigl(1 - \tfrac{1}{d}\bigr).
\end{equation}
Therefore, for $\rep(m_4) \leq j \leq \rep(m_3)-1$,
\begin{align}\label{for_Bin1_m3}
\notag \Bin \hspace{1pt}(j)& \Bigl[1+j \mathcal{H}(j)+ j \mathcal{B}(j) -\mathcal{A}(j) - j \mathcal{A}(j) \mathcal{H}(j) \Bigr] \\[9pt]
&\equiv
\Bin \hspace{1pt}(j) \Bigl[\Bigl(\tfrac{1}{d}\Bigr)\Bigl(1+j\mathcal{H}(j)-\tfrac{j}{p}\Bigr)
+\Bigl(\tfrac{j}{p}\Bigr)\Bigl(1-\mathcal{A}(j)+\tfrac{\rep(m_1)-m_1}{p}\Bigr)\Bigr]
\pmod{p^2}.
\end{align}
\noindent Similarly, for $\rep(m_3) \leq j \leq \rep(m_2)-1$,
\begin{align}\label{for_Bin1_m2}
\Bin \hspace{1pt}(j) &\Bigl[1+j \mathcal{H}(j)+ j \mathcal{B}(j) -\mathcal{A}(j) - j \mathcal{A}(j) \mathcal{H}(j) \Bigr] 
\equiv
\Bin \hspace{1pt}(j) \Bigl[\tfrac{1}{p}\Bigr]
\Bigl[j \Bigl(\tfrac{1}{d}+\tfrac{r}{d}\Bigr)\Bigr]
\pmod{p^2}.
\end{align}
Accounting for (\ref{for_Bin1_m4}), (\ref{for_Bin1_m3}) and (\ref{for_Bin1_m2}) in (\ref{Resid3_4G2}), it now suffices to show
\begin{multline}\label{Resid4_4G2}
\sum_{j=0}^{\rep(m_2) - 1}
\Bin \hspace{1pt}(j) \Bigl[1+j \mathcal{H}(j) \Bigr]
+\sum_{j=0}^{\rep(m_2) - 1}
\Bin \hspace{1pt}(j)
\Bigl[ j \mathcal{B}(j) -\mathcal{A}(j) - j \mathcal{A}(j) \mathcal{H}(j) \Bigr]\\
-\sum_{j=\left\lfloor \frac{r(p-1)}{d} \right\rfloor+1}^{\rep(m_3)-1} 
\Bin \hspace{1pt}(j)
\biggl\{\Bigl[\tfrac{1}{d}+j\Bigr] \Bigl[\Bigl(\tfrac{1}{p}\Bigr) \left(1-\mathcal{A}(j) + \tfrac{\rep(m_1)-m_1}{p}\right) +1+j\mathcal{H}(j)-\tfrac{j}{p}\Bigr]
-\tfrac{r}{d}
\biggr\}
\\
-\sum_{j=\left\lfloor (d-r)\frac{p-1}{d} \right\rfloor+1}^{\rep(m_2) - 1}
\Bin \hspace{1pt}(j) \Bigl[\tfrac{1}{p}\Bigr]
\Bigl[j^2+\Bigl(\tfrac{1}{d}\Bigr)\Bigl(\tfrac{r}{d}\Bigr)+j \Bigl(\tfrac{1}{d}+\tfrac{r}{d}\Bigr)\Bigr]
\equiv s(p) 
\pmod{p^2}.
\end{multline}

\noindent Recall that the third sum above is vacuous unless $p\equiv r \pmod{d}$ and $r^2\equiv 1\pmod{d}$ or $p\equiv d-r \pmod{d}$ and $r^2\equiv -1\pmod{d}$. In these cases the sum is over one value of $j=\rep(m_3)-1 = \bigl\lfloor \tfrac{r(p-1)}{d} \bigr\rfloor +1$ and so $\left(\tfrac{1}{d} +j\right) = \tfrac{rp}{d}$. Also $\rep(m_1)-m_1=p\left(1-\tfrac{1}{d}\right)$. So we get 
\begin{multline*}
\sum_{j=\left\lfloor \frac{r(p-1)}{d} \right\rfloor+1}^{\rep(m_3)-1} 
\Bin \hspace{1pt}(j)
\biggl[\Bigl[\tfrac{1}{d}+j\Bigr] \Bigl[\Bigl(\tfrac{1}{p}\Bigr) \left(1-\mathcal{A}(j) + \tfrac{\rep(m_1)-m_1}{p}\right) +1+j\mathcal{H}(j)-\tfrac{j}{p}\Bigr]
-\tfrac{r}{d}
\biggr]\\
=
\sum_{j=\left\lfloor \frac{r(p-1)}{d} \right\rfloor+1}^{\rep(m_3)-1} 
\Bin \hspace{1pt}(j)
\Bigl[\tfrac{rpj}{d}\mathcal{H}(j)-\tfrac{r}{d}\mathcal{A}(j)+\tfrac{r}{d}\tfrac{d-1}{d}+\tfrac{rp}{d}-\tfrac{rj}{d}
\Bigr].
\end{multline*}

\noindent Note $\Bin \hspace{1pt}(j) \in p \mathbb{Z}_p$ for $j=\rep(m_3)-1$. Thus $\Bin \hspace{1pt}(j) \hspace{1pt}p\hspace{1pt} \mathcal{H}(j)\equiv \Bin \hspace{1pt}(j) \hspace{1pt}p \left(\frac{1}{p}\right) \pmod{p^2}$ and $\Bin \hspace{1pt}(j) \mathcal{A}(j) \equiv \Bin \hspace{1pt}(j) \frac{\rep(m_1)-m_1}{p} \equiv \Bin \hspace{1pt}(j) \frac{d-1}{d}\pmod{p^2}$ for this value of $j$, and 
\begin{equation*}
\sum_{j=\left\lfloor \frac{r(p-1)}{d} \right\rfloor+1}^{\rep(m_3)-1} 
\Bin \hspace{1pt}(j)
\Bigl[\tfrac{rpj}{d}\mathcal{H}(j)-\tfrac{r}{d}\mathcal{A}(j)+\tfrac{r}{d}\tfrac{d-1}{d}+\tfrac{rp}{d}-\tfrac{rj}{d}
\Bigr]
\equiv
\sum_{j=\left\lfloor \frac{r(p-1)}{d} \right\rfloor+1}^{\rep(m_3)-1} 
\Bin \hspace{1pt}(j)
\Bigl[\tfrac{rp}{d}
\Bigr]
\equiv 0
\pmod{p^2}.
\end{equation*}
\noindent So the third term of (\ref{Resid4_4G2}) vanishes modulo $p^2$. Next we examine the last term of (\ref{Resid4_4G2}),
\begin{equation*}
\sum_{j=\left\lfloor (d-r)\frac{p-1}{d} \right\rfloor+1}^{\rep(m_2) - 1}
\Bin \hspace{1pt}(j) \Bigl[\tfrac{1}{p}\Bigr]
\Bigl[j^2+\Bigl(\tfrac{1}{d}\Bigr)\Bigl(\tfrac{r}{d}\Bigr)+j \Bigl(\tfrac{1}{d}+\tfrac{r}{d}\Bigr)\Bigr]
=\sum_{j=\left\lfloor (d-r)\frac{p-1}{d} \right\rfloor+1}^{\rep(m_2) - 1}
\Bin \hspace{1pt}(j) \Bigl[\tfrac{1}{p}\Bigr]
\Bigl[\Bigl(j+\tfrac{1}{d}\Bigr)\Bigl(j+\tfrac{r}{d}\Bigr)\Bigr]
\end{equation*}
modulo $p^2$. Recall that this sum is vacuous unless $p\equiv d-1\pmod{d}$, $p \equiv r \pmod{d}$ and $r^2\equiv-1\pmod{d}$ or $p \equiv d-r \pmod{d}$ and $r^2\equiv1\pmod{d}$. In these cases the limits of summation are equal and the sum is over one value of $j=\rep(m_2)-1=p-r\lfloor \tfrac{p-1}{d} \rfloor - \left\lfloor \frac{ar}{d} \right\rfloor -1$. Now
\begin{equation*}
p-r\lfloor \tfrac{p-1}{d}\rfloor - \left\lfloor \tfrac{ar}{d} \right\rfloor -1
=p-r\left(\tfrac{p-a}{d}\right)- \left\lfloor \tfrac{ar}{d} \right\rfloor -1
=p\left(1-\tfrac{r}{d}\right) + \langle \tfrac{ar}{d} \rangle-1.
\end{equation*}
Thus if $p\equiv d-1 \pmod{d}$,
\begin{equation*}
j=p\left(1-\tfrac{r}{d}\right)+\langle r-\tfrac{r}{d}\rangle -1\\
=p\left(1-\tfrac{r}{d}\right)+1-\tfrac{r}{d}-1
=p\left(1-\tfrac{r}{d}\right)-\tfrac{r}{d}.
\end{equation*}
Then
$$\left(j+\tfrac{r}{d}\right) = p\left(1-\tfrac{r}{d}\right) \in p \mathbb{Z}_p$$
and
$$\left(j+\tfrac{1}{d}\right) = p\left(1-\tfrac{r}{d}\right) -\tfrac{r}{d} +\tfrac{1}{d}\in  \mathbb{Z}_p.$$
\noindent If $p\equiv r \pmod{d}$ and $r^2 \equiv -1 \pmod{d}$,
$$j=p\left(1-\tfrac{r}{d}\right)-\langle \tfrac{r^2}{d} \rangle -1=p\left(1-\tfrac{r}{d}\right)+\tfrac{d-1}{d}-1=p\left(1-\tfrac{r}{d}\right)-\tfrac{1}{d}.$$
\noindent If $p\equiv d-r \pmod{d}$ and $r^2 \equiv 1 \pmod{d}$,
$$j=p\left(1-\tfrac{r}{d}\right)+\langle \tfrac{dr-r^2}{d} \rangle -1=p\left(1-\tfrac{r}{d}\right)+\tfrac{d-1}{d}-1=p\left(1-\tfrac{r}{d}\right)-\tfrac{1}{d}.$$
Then, in both cases,
$$\left(j+\tfrac{1}{d}\right) = p\left(1-\tfrac{r}{d}\right) \in p \mathbb{Z}_p$$
and
$$\left(j+\tfrac{r}{d}\right) = p\left(1-\tfrac{r}{d}\right) -\tfrac{1}{d} +\tfrac{r}{d}\in  \mathbb{Z}_p.$$
\noindent Therefore,\\
\begin{equation*}
\sum_{j=\left\lfloor (d-r)\frac{p-1}{d} \right\rfloor+1}^{\rep(m_2) - 1}
\Bin \hspace{1pt}(j) \Bigl[\tfrac{1}{p}\Bigr]
\Bigl[\Bigl(j+\tfrac{1}{d}\Bigr)\Bigl(j+\tfrac{r}{d}\Bigr)\Bigr]
\equiv 0
\pmod{p^2},
\emptyline
\end{equation*}
as $\Bin \hspace{1pt}(j) \in p^2\mathbb{Z}_p$ for $j=\rep(m_2)-1$.

Finally, we examine the two remaining terms of (\ref{Resid4_4G2}).  Taking $m=\rep(m_1)-1$ and $n=\rep(m_2)-1$ in Corollary \ref {Cor_BinHarId1} we get that
\begin{equation*}
\sum_{j=0}^{\rep(m_2) - 1}
\Bin \hspace{1pt}(j) \Bigl[1+j \mathcal{H}(j) \Bigr] \equiv s(p) 
\pmod{p^2},
\end{equation*}
and taking $l=p$, $m=\rep(m_1)-1$, $n=\rep(m_2)-1$, $c_2=\rep(m_1)-m_1$ and $c_1=\rep(m_2)-m_2$ in Corollary \ref {Cor_BinHarId2} we get
\begin{equation*}
\sum_{j=0}^{p-\left\lfloor \frac{p-1}{d_2} \right\rfloor - 1}
\Bin \hspace{1pt}(j)
\Bigl[ j \mathcal{B}(j) -\mathcal{A}(j) - j \mathcal{A}(j) \mathcal{H}(j) \Bigr]
\equiv 0 \pmod{p^2}
\end{equation*}
as required.
\end{proof}


\begin{thebibliography}{999}
\bibitem{A} S. Ahlgren, \emph{Gaussian hypergeometric series and combinatorial congruences}, Symbolic computation, number theory, special functions, physics and combinatorics (Gainesville, Fl, 1999), 1--12, Dev. Math., \textbf{4}, Kluwer, Dordrecht, 2001.

\bibitem{AO} S. Ahlgren, K. Ono, \emph{A Gaussian hypergeometric series evaluation and Ap{\'e}ry number congruences}, J. Reine Angew. Math. \textbf{518} (2000), 187--212.

\bibitem{Ap} R. Ap\'ery, \emph{Irrationalit\'e de $\zeta(2)$ et $\zeta(3)$}, Ast\'erique, \textbf{61} (1979), 11--13.

\bibitem{B1} F. Beukers, \emph{Another congruence for the Ap{\'e}ry numbers}, J. Number Theory \textbf{25} (1987), no. 2, 201--210.

\bibitem{CDE} S. Chowla, B. Dwork, R. Evans, \emph{On the mod $p\sp 2$ determination of $\left({(p-1)/2\atop (p-1)/4}\right)$}, J. Number Theory \textbf{24} (1986), no. 2, 188--196. 

\bibitem{F2} J. Fuselier, \emph{Hypergeometric functions over $\mathbb{F}_p$ and relations to elliptic curves and modular forms}, Proc. Amer. Math. Soc. \textbf{138} (2010), no.1, 109--123. 

\bibitem{FOP} S. Frechette, K. Ono, and M. Papanikolas, \emph{Gaussian hypergeometric functions and traces of Hecke operators}, Int. Math. Res. Not. \textbf{2004}, no. 60, 3233--3262.

\bibitem{G} J. Greene, \emph{Hypergeometric functions over finite fields}, Trans. Amer. Math. Soc. {\bf 301} (1987), no. 1, 77--101.

\bibitem{G2} J. Greene, \emph{Lagrange inversion over finite fields}, Pacific J. Math. \textbf{130} (1987), no. 2, 313--325. 

\bibitem{G3} J. Greene, \emph{Hypergeometric functions over finite fields and representations of ${\rm SL}(2,q)$}, Rocky Mountain J. Math. \textbf{23} (1993), no. 2, 547--568.

\bibitem{GK} B. Gross, N. Koblitz, \emph{Gauss sums and the p-adic $\Gamma$-function}, Ann. Math. \textbf{109} (1979), no. 3, 569--581.

\bibitem{GS} J. Greene, D Stanton, \emph{A character sum evaluation and Gaussian hypergeometric series}, J. Number Theory \textbf{23} (1986), no. 1, 136-148.

\bibitem{K} T. Kilbourn, \emph{An extension of the Ap{\'e}ry number supercongruence}, Acta Arith. \textbf{123} (2006), no. 4, 335--348.

\bibitem{Ko} N. Koblitz, \emph{$p$-adic analysis: a short course on recent work}, London Math. Soc. Lecture Note Series, \textbf{46}. Cambridge University Press, Cambridge-New York, 1980.

\bibitem{M1} E. Mortenson, \emph{A supercongruence conjecture of Rodriguez-Villegas for a certain truncated hypergeometric function}, J. Number Theory \textbf{99} (2003), no. 1, 139--147.
 
\bibitem{M2} E. Mortenson, \emph{Supercongruences between truncated ${}_{2}F_{1}$ hypergeometric functions and their Gaussian analogs}, Trans. Amer. Math. Soc. \textbf{355} (2003), no. 3, 987--1007.

\bibitem{M} E. Mortenson, \emph{Supercongruences for truncated ${}_{n+1}F_{n}$ hypergeometric series with applications to certain weight three newforms}, Proc. Amer. Math. Soc. \textbf{133} (2005), no. 2, 321--330.

\bibitem{DMC} D. McCarthy, \emph{$p$-adic hypergeometric series and supercongruences}, Ph.D thesis, University College Dublin, 2010.

\bibitem{DMC2} D. McCarthy, \emph{On a supercongruence conjecture of Rodriguez-Villegas}, Proc. Amer. Math. Soc. \textbf{140} (2012), 2241--2254.

\bibitem{DMC3} D. McCarthy, \emph{Binomial coefficient--harmonic sum identities associated to supercongruences}, Integers \textbf{11} (2011), A37, 8 pp.

\bibitem{MO} D. McCarthy, R. Osburn, \emph{A $p$-adic analogue of a formula of Ramanujan}, Arch. Math. (Basel) \textbf{91}, no. 6, (2008), 492-504.

\bibitem{P} M. Papanikolas, \emph{A formula and a congruence for Ramanujan's $\tau$-function}, Proc. Amer. Math. Soc. \textbf{134} (2006), no. 2, 333--341.

\bibitem{R} F. Rodriguez-Villegas, \emph{Hypergeometric families of Calabi-Yau manifolds}, Calabi Yau varieties and mirror symmetry (Toronto, Ontario, 2001), Fields Inst. Commun. \textbf{38}, Amer. Math. Soc., 2003, 223--231.

\bibitem{V} V. Vega, \emph{Hypergeometric functions over finite fields and their relations to algebraic curves}, preprint, arXiv:1008.3401.

\bibitem{VH} L. Van Hamme, \emph{Proof of a conjecture of Beukers on Ap{\'e}ry numbers}, Proceedings of the conference on $p$-adic analysis (Houthalen, 1987), Vrije Univ. Brussel, Brussels, 1986, 189--195.

\bibitem{vdP} A. van der Poorten, \emph{A proof that Euler missed$\ldots $Ap\'ery's proof of the irrationality of $\zeta (3)$, An informal report}, Math. Intelligencer \textbf{1} (1978/79), no. 4, 195--203.

\end{thebibliography}
\end{document}